\documentclass{amsart}
\usepackage[utf8]{inputenc}
\usepackage{amsthm}
\usepackage{graphicx}
\usepackage{enumerate}
\usepackage{tikz}
\usepackage{pgfplots}
\usepackage{mathrsfs}
\usepackage{braket}

\newtheorem{theorem}{Theorem}[section]
\newtheorem*{theorem*}{Theorem}
\newtheorem{lemma}[theorem]{Lemma}
\newtheorem*{lemma*}{Lemma}
\newtheorem{proposition}[theorem]{Proposition}

\newtheorem{problem}[theorem]{Problem}
\newtheorem{corollary}[theorem]{Corollary}
\theoremstyle{definition} \newtheorem{example}[theorem]{Example}
\theoremstyle{definition} \newtheorem{definition}[theorem]{Definition}
\theoremstyle{definition} 

\newcommand{\CC}{\mathbb{C}}
\newcommand{\ZZ}{\mathbb{Z}}
\newcommand{\QQ}{\mathbb{Q}}
\newcommand{\RR}{\mathbb{R}}

\newcommand{\GL}{\operatorname{GL}}

\newcommand{\PSU}{\operatorname{PSU}}
\newcommand{\U}{\operatorname{U}}
\newcommand{\SU}{\operatorname{SU}}
\newcommand{\SO}{\operatorname{SO}}

\newcommand{\Sp}{\operatorname{Sp}}
\newcommand{\im}{\operatorname{im}}

\newcommand{\su}{\mathfrak{su}}
\renewcommand{\u}{\mathfrak{u}}
\newcommand{\so}{\mathfrak{so}}
\renewcommand{\deg}{\operatorname{deg}}
\newcommand{\tr}{\operatorname{tr}}
\newcommand{\Ad}{\operatorname{Ad}}

\newcommand{\affW}{\widetilde{W}}
\newcommand{\extW}{\widetilde{W}^{\mathrm{ext}}}
\DeclareMathOperator{\cdes}{cdes}
\DeclareMathOperator{\cDes}{cDes}
\DeclareMathOperator{\sgn}{sgn}
\DeclareMathOperator{\diag}{diag}
\DeclareMathOperator{\ad}{ad}
\newcommand{\B}{\mathscr{B}}
\newcommand{\bigP}{\mathscr{P}}

\newcommand{\sectionsymbol}{§}
\renewcommand{\S}{\operatorname{S}}

\renewcommand{\epsilon}{\varepsilon}
\newcommand{\p}{\mathfrak{p}}
\renewcommand{\P}{\mathcal{P}}
\newcommand{\G}{\mathcal{G}}
\newcommand{\g}{\mathfrak{g}}
\renewcommand{\H}{\mathcal{H}}
\newcommand{\h}{\mathfrak{h}}
\newcommand{\K}{\mathcal{K}}
\renewcommand{\k}{\mathfrak{k}}
\newcommand{\A}{\mathcal{A}}
\renewcommand{\a}{\mathfrak{a}}
\newcommand{\Q}{\mathcal{Q}}

\newcommand{\alc}{\mathscr{A}}
\newcommand{\cowt}{\hat{L}}

\begin{document}

\title{Large products of double cosets for symmetric subgroups}
\author{Brendan Pawlowski}
\date{}

\begin{abstract} 
    We consider the problem of classifying pairs $x,y \in \G$ such that $\K x \K y \K = \G$ where $\G$ is a simple compact connected Lie group and $\K$ is a symmetric subgroup. We give a necessary condition on $x,y$ for all simply connected $\G$, and a complete classification when $\G = \SU(n)$ and any symmetric $\K \subseteq \G$ \emph{except} the type AIII case $\K \simeq \S(\U(p) \times \U(n-p))$ with $p \neq n/2$. We also present some applications of these results to gate decompositions in quantum computing.
\end{abstract}

\maketitle

\section{Introduction}

Let $\G$ be a simple compact connected Lie group and $\theta : \G \to \G$ a group automorphism satisfying $\theta^2 = \operatorname{id}$. Let $\K$ be the fixed-point subgroup $\G^{\theta} = \{g \in \G : \theta(g) = g\}$, or more generally any union of connected components of $\G^{\theta}$. Subgroups $\K$ that can be obtained this way for some $\theta$ are called \emph{symmetric subgroups}. Here is the main problem considered in this paper.
\begin{problem} \label{prob:main}
Describe all pairs $x,y \in \G$ such that $\K x \K y \K = \G$.
\end{problem}
By $\K x \K y \K$ we mean the set $\{k_1 x k_2 y k_3 : k_1,k_2,k_3 \in \K\}$. Note that this set only depends on the double cosets $\K x \K$ and $\K y \K$.  There is a well-developed theory of these double cosets when $\K$ is a symmetric subgroup, which puts the double cosets in bijection with points of a certain convex polytope. Solutions to Problem~\ref{prob:main} will therefore be described in terms of this polytope.

Our main results are (1) a partial solution to Problem~\ref{prob:main} for simply connected $\G$, and (2) a complete solution to Problem~\ref{prob:main} for $\G = \SU(n)$ and most symmetric subgroups $\K$.
\begin{theorem} \label{thm:main-1}
Suppose $\G$ is simply connected and $\K x \K y \K = \G$. Fix a fundamental alcove $\alc$ for $(\g,\k)$. Then $\K y \K = \K x^{-1} \K$, and if $x = \exp(i\pi X)$ for $X \in \overline{\alc}$, then $X$ is fixed by every extended affine Weyl group element $f$ with $f(\alc) = \alc$.
\end{theorem}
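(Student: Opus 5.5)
The plan is to reduce everything to the fact that the identity coset $\K$ is a very special double coset, and then use the symmetries of the alcove. Since $\K x \K y \K = \G$, in particular $1 \in \K x \K y \K$, so $1 = k_1 x k_2 y k_3$ for some $k_i \in \K$. Solving gives $y = k_2^{-1} x^{-1} k_1^{-1} k_3^{-1} \in \K x^{-1} \K$, so $\K y \K = \K x^{-1}\K$. This settles the first assertion, and the hypothesis becomes $\K x \K x^{-1} \K = \G$.

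For the second assertion, I would use the standard description of the double cosets. Every double coset has the form $\K \exp(i\pi X) \K$ with $X \in \overline{\alc}$. When $\G$ is simply connected, $X$ is unique. The extended affine Weyl group elements $f$ with $f(\alc)=\alc$ form a finite group $\Omega$ of alcove symmetries. For $\G$ simply connected, I expect $\Omega$ to be realized by multiplication by central-type elements: for each $f \in \Omega$ there should be some $z_f$ in the "center relative to $\theta$", namely $z_f = \exp(i\pi \lambda_f)$ with $\lambda_f$ a coweight-type vertex of $\alc$, such that
\[ \K z_f \exp(i\pi X)\K = \K \exp(i\pi f(X))\K. \]
Here $z_f$ normalizes $\K$ and commutes with the relevant torus $\exp(i\a)$. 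The fact that the double cosets are parametrized by $\overline{\alc}$, and not by a quotient of it, is what the simply connected hypothesis buys us.

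The key step is to pick a target $g \in \G$ and exploit $g \in \K x \K x^{-1}\K$. Taking $g = z_f$ gives $z_f = k_1 x k_2 x^{-1} k_3$. Since $z_f$ normalizes $\K$, this rearranges to $\K z_f x \K = \K x \K$ up to the appropriate side. Concretely, $x^{-1}k_1^{-1} z_f k_3^{-1} = k_2 x^{-1}$, and moving $z_f$ past $k$'s via normalization gives $z_f x^{-1} \in \K x^{-1}\K$. Then $\K \exp(i\pi f(X'))\K = \K\exp(i\pi X')\K$, where $X'$ is the parameter of $x^{-1}$. By uniqueness of the parameter in $\overline{\alc}$, $f(X') = X'$. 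Inversion corresponds to $X \mapsto -w_0$-type symmetry of $\overline\alc$, which conjugates $\Omega$ to itself. So the statement that $X'$ is fixed by all of $\Omega$ is equivalent to $X$ being fixed by all of $\Omega$. Alternatively, I would just redo the argument with $x$ and $x^{-1}$ swapped, using $\K x^{-1}\K x\K = (\K x\K x^{-1}\K)^{-1} = \G$.

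The main obstacle is the middle step: proving that each alcove symmetry $f \in \Omega$ really comes from left multiplication by an element $z_f$ normalizing $\K$ (typically lying in $Z(\G)\cdot \K$-type elements or in $\exp(i\pi \cowt)$), with the exact effect $X \mapsto f(X)$ on parameters. I would try to write $f = t_\lambda w$, with $\lambda$ a vertex of $\alc$ and $w$ in the finite Weyl group. Then $\exp(i\pi\lambda)$ is the candidate for $z_f$: one checks that it is fixed by $\theta$ up to the center, hence normalizes $\K$, and that $w$ is realized by an element of $N_\K(\a)$.
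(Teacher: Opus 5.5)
Your argument is correct, and its core is the paper's: test $\K x\K x^{-1}\K=\G$ on $\sqrt z=\exp(\pi i Z)$ for a coweight $Z$, show the double coset of $x$ is unchanged by the corresponding alcove symmetry, and conclude by uniqueness in $\overline{\alc}$ (Theorem~\ref{thm:fund-alcove}).

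\textbf{Where you differ.} The paper never claims that $\sqrt z$ normalizes $\K$. From $\sqrt z=k_1xk_2x^{-1}k_3$ it only extracts $\sqrt z\,k x\in\K x\K$. It then passes to Cartan doubles, where $\theta(\sqrt z)=\sqrt z^{-1}$ and the centrality of $z=\sqrt z^{\,2}$ turn the stray $k$ into a $\G$-conjugation, giving $x^2\sim x^2z$. Lemma~\ref{lem:KAK-conj} and Lemma~\ref{lem:affW-orbit} finish. You instead use that $z_f$ normalizes $\K$ and commutes with $\A$, plus the realization of $W$ by $N_\K(\a)$. This is more direct and bypasses Lemma~\ref{lem:KAK-conj}, but the normalization needs a justification you only gesture at:
\begin{itemize}
\item $\theta(\sqrt z)=\sqrt z\,z^{-1}$ with $z$ central (the Helgason lemma the paper cites), so conjugation by $\sqrt z$ commutes with $\theta$ and preserves $\G^\theta$.
\item You also need $\K=\G^\theta$. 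This holds because $\G^\theta$ is connected when $\G$ is simply connected.
\end{itemize}
With this in place, the step you called the main obstacle is routine. Writing $f=\tau_\lambda w$ and letting $n\in N_\K(\a)$ represent $w$, you get $\exp(i\pi f(X))=z_f\,n\exp(i\pi X)n^{-1}$, and $z_fn\in\K z_f$. Alternatively, writing $f=f_Z^{-1}\tau_Z$ as in the paper, $\sqrt z\,x=\exp(\pi i\tau_Z(X))$ is $\affW$-equivalent to $f(X)$ by Lemma~\ref{lem:affW-orbit}, with no normalization needed.

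\textbf{One slip.} $\K x^{-1}\K x\K$ is not $(\K x\K x^{-1}\K)^{-1}$. Inverting $k_1xk_2x^{-1}k_3$ gives $k_3^{-1}xk_2^{-1}x^{-1}k_1^{-1}$, so $\K x\K x^{-1}\K$ is its own inverse. The statement $\K x^{-1}\K x\K=\G$ is still true: apply $\theta$, using $\theta(x)=x^{-1}$ for $x\in\A$. Your $-w_0$ argument for passing from $a(x^{-1})$ back to $X$ also works. But neither detour is needed. From $z_f=k_1xk_2x^{-1}k_3$ you get $z_fk_3^{-1}x=k_1xk_2$, hence $z_fx\in\K x\K$ by normalization. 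That gives $f(X)=X$ directly, and it is exactly the paper's $\sqrt z\,kx\in\K x\K$.
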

The terms used in Theorem~\ref{thm:main-1} will be defined later. The concrete takeaway is that the double coset $\K x\K$ may be identified with a point $X$ in the polytope $\overline{\alc}$, and Theorem~\ref{thm:main-1} says $X$ must be fixed by a certain group of symmetries of $\overline{\alc}$.

Part (2) is easier to describe precisely. First, it can be shown that if $\G = \SU(n)$, then the following three explicit examples of $\theta$ account for all possibilities up to conjugation \cite[Ch. X, Table V]{helgason}. 
\begin{description}
\item[Type AI] $\theta(g) = \overline{g}$, so $\K$ is the special orthogonal group $\SO(n)$.
\item[Type AII] $n$ even and $\theta(g) = \Omega \overline{g} \Omega^{-1}$ where $\Omega = \left[ \begin{smallmatrix} 0 & -I_{n/2} \\ I_{n/2} & 0 \end{smallmatrix} \right]$, so $\K$ is the compact symplectic group
\begin{equation*}
\Sp(n/2) = \{g \in \SU(n) : \Omega \overline{g} \Omega^{-1} = g\}.
\end{equation*}
\item[Type AIII] $\theta(g) = J_p g J_p^{-1}$ where $J_p = \diag(\overbrace{1, \ldots, 1}^p, \overbrace{-1, \ldots, -1}^{n-p})$, so $\K$ is the subgroup of block-diagonal matrices
\begin{equation*}
\S(\U(p) \times \U(n-p)) = \left \{ \begin{bmatrix} V & 0 \\ 0 & W \end{bmatrix} : V \in \U(p), W \in \U(n-p), \det(V)\det(W) = 1 \right\}.
\end{equation*}
\end{description}

\begin{theorem} \label{thm:main-2}
    Suppose $\G = \SU(n)$ and $U,V \in \G$. Then $\K U \K V \K = \G$ if and only if the appropriate conditions below hold for the given involution $\theta$.
    \begin{description}
    \item[Type AI] $U\theta(U)^{-1}$ and $V\theta(V)^{-1}$ both have characteristic polynomial $x^n + (-1)^n$, or equivalently both have eigenvalues $e^{i\pi (n-2j+1)/n}$ for $j = 1, \ldots, n$.
    \item[Type AII] $U\theta(U)^{-1}$ and $V\theta(V)^{-1}$ both have characteristic polynomial $(x^{n/2} + (-1)^{n/2})^2$, or equivalently both have eigenvalues $e^{i\pi (n-4j+2)/n}$ for $j = 1, \ldots, n/2$, each with multiplicity 2.
    \item[Type AIII, $p=n/2$] $U\theta(U)^{-1}$ and $V\theta(V)^{-1}$ have the same eigenvalues, which are of the form $e^{\pm i\pi t_1}, \ldots, e^{\pm i\pi t_{n/2}}$ where $\tfrac{1}{2} \geq t_1 \geq \cdots \geq t_{n/2} \geq 0$ and $t_i + t_{n-i+1} = \tfrac{1}{2}$ for all $i$. In the specific case $\theta(g) = J_p g J_p^{-1}$ and $\K = \S(\U(p) \times \U(n-p))$, this is equivalent to requiring that the upper-left $p \times p$ corners of $U,V$ have the same singular values $\sigma_1 \geq \cdots \geq \sigma_p$ which satisfy $\sigma_i^2 + \sigma_{p-i+1}^2 = 1$ for all $i$.
    \end{description}
\end{theorem}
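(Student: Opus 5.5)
The approach is to reduce the condition $\K U \K V \K = \G$ to a statement about the Cartan map $\phi(g) = g\theta(g)^{-1}$, which identifies $\G/\K$ with a subset of $\G$. In each type, the double coset $\K g \K$ is determined by the conjugacy class of $\phi(g)$ under $\K$; equivalently, by the spectrum of $\phi(g)$ together with the extra symmetry this spectrum has in each type. The plan is to parametrize double cosets by points $X$ of the closed alcove $\overline{\alc}$, and then to use Theorem~\ref{thm:main-1} in both directions.

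\textbf{Necessity.} $\SU(n)$ is simply connected, so Theorem~\ref{thm:main-1} applies. It gives $\K V\K = \K U^{-1}\K$, so the spectra of $\phi(U)$ and $\phi(V)$ are determined by the same point $X$. It also forces $X$ to be fixed by the group $\Omega_\alc$ of extended affine Weyl group elements stabilizing $\alc$. I will compute $\alc$ and $\Omega_\alc$ case by case from the restricted root systems.
\begin{itemize}
\item In type AI (restricted roots $A_{n-1}$), $\Omega_\alc$ is the cyclic group of rotations of the extended Dynkin diagram. Its unique fixed point is the barycenter, and the corresponding spectrum is the $n$ equally spaced points $e^{i\pi(n-2j+1)/n}$.
\item Type AII is analogous with rank $n/2-1$, giving doubled multiplicities.
\item In type AIII with $p=n/2$ (restricted roots $C_{p}$), $\Omega_\alc$ is the $\ZZ/2$ diagram flip of $\tilde C_p$. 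Its fixed locus is exactly the condition $t_i + t_{p-i+1} = \tfrac12$.
\end{itemize}
The translation to characteristic polynomials is direct: equally spaced roots $e^{i\pi(n-2j+1)/n}$ are exactly the roots of $x^n+(-1)^n$. The singular-value reformulation follows from writing $U$ in CS-decomposition form: the upper-left $p\times p$ block has singular values $\cos(\pi t_i/\text{appropriate scale})$, and $\phi(U)$ has eigenvalues $e^{\pm i\pi t_i}$ up to normalization. Here $t_i+t_{p-i+1}=\tfrac12$ becomes $\sigma_i^2+\sigma_{p-i+1}^2=1$.

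\textbf{Sufficiency.} Given $X$ fixed by $\Omega_\alc$, with $x=\exp(i\pi X)$ and $\K y\K=\K x^{-1}\K$, I need every $g\in\G$ to lie in $\K x\K x^{-1}\K$. Equivalently, for every double coset $\K g \K$, the sets $\K x\K$ and $\K g \K x \K$ must meet; equivalently again, some $k \in \K$ gives $x k x^{-1} \in \K g \K$. So the plan is to show that the image of the map $k\mapsto [x k x^{-1}]\in\overline{\alc}$ is all of $\overline{\alc}$. The map is continuous from the connected group $\K$. Its image contains the vertex $[1]$ (take $k=1$). For the other vertices, I would use that the elements of $\Omega_\alc$ are realized by elements of the normalizer of a maximal torus. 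Since $X$ is $\Omega_\alc$-fixed, a suitable $k$ sends $x k x^{-1}$ to the translated vertex. Concretely, in type AI one can take $k$ to be a permutation-type matrix in $\SO(n)$ cyclically shifting coordinates; conjugating $x=\diag(e^{i\pi s_j})$ by it yields $\exp$ of a translate, which is the other vertices of $\overline{\alc}$.

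To get the whole simplex rather than only its vertices, I would exhibit explicit one-parameter and then multi-parameter families inside $\K$ (products of commuting rotations in coordinate planes). For these families the eigenvalues of $\phi(xkx^{-1})$ can be computed, and I would check that they sweep out $\overline{\alc}$ via a degree or convexity argument. For example, the image contains the boundary sphere with nonzero degree, so it is surjective onto the simplex.

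The main obstacle is this surjectivity step. The hardest case is AIII with $p=n/2$, where the fixed locus is positive-dimensional, so the argument must work uniformly in $X$. There I would first try to reduce to $2\times 2$ and $4\times 4$ blocks, pairing $t_i$ with $t_{p-i+1}$, so that the problem splits into a product of small-rank cases checked by hand.
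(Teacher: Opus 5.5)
\textbf{Necessity.} Your necessity direction is correct and is the paper's argument. It is Theorem~\ref{thm:main-1} together with two facts: $\extW_{\alc}$ fixes only the centroid in the $\tilde A$ cases, and it acts by $(x_1,\dots,x_n)\mapsto(\tfrac12-x_n,\dots,\tfrac12-x_1)$ in the $\tilde C$ case.

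\textbf{The gap is sufficiency.} Your reduction to showing that $k\mapsto a(xkx^{-1})$ maps $\K$ onto $\overline{\alc}$ is right, but nothing afterwards establishes that surjectivity. A connected image containing some vertices need not be all of $\overline{\alc}$. The vertex claim is also incomplete:
\begin{itemize}
\item In type AI the cyclic-shift elements of $\SO(n)$ do reach every vertex, since all $a_i=1$.
\item In type AIII, realizing $\extW_{\alc}$ by normalizer elements only reaches the two special vertices $0$ and $\omega_n$, not $\tfrac12\omega_i$ for $0<i<n$.
\end{itemize}
The paper's $\Sp(2)\supseteq\U(2)$ example at the end of \sectionsymbol\ref{sec:SU-block} shows that these ingredients cannot suffice. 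There $X=(\tfrac12,0)$ is fixed by $\extW_{\alc}$, and the image of $k\mapsto a(VkV^{-1})$ is connected and contains both special vertices. Yet it is only the segment $\{x_1=x_2\}$. So a proof must use specific features of each pair $(\G,\K)$. Your ``explicit families plus a degree argument'' leaves open exactly the points that carry the whole content of sufficiency:
\begin{itemize}
\item which $(n-1)$-parameter family in $\SO(n)$ you would use;
\item why its boundary would land on $\partial\overline{\alc}$ with nonzero degree;
\item how you would compute the resulting spectra.
\end{itemize}

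\textbf{What is missing, case by case.} In type AI, surjectivity says that the spectrum of $kA^{-1}k^{-1}A$, with $A=x^2=\exp(2\pi i\zeta)$, takes every possible value as $k$ ranges over $\SO(n)$.
\begin{itemize}
\item If $k$ could range over $\SU(n)$, this would be the statement $(Y,\zeta,\zeta)\in\bigP(\SU(n))$. The paper checks that in Lemma~\ref{lem:AI-suff} via the Agnihotri--Woodward inequalities (Theorem~\ref{thm:qLR}) and the grading identity of Proposition~\ref{prop:grading}.
\item Restricting the conjugator to real orthogonal $k$ is the real difficulty. The paper resolves it with Lemma~\ref{lem:B-fat-points} plus the Falbel--Wentworth theorem $\bigP(\SU(n))=\bigP(\SU(n),\SO(n))$. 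Your proposal has nothing that could replace this input.
\end{itemize}
Type AII is not ``analogous'' for sufficiency. In the notation of \sectionsymbol\ref{sec:SU-Sp}, the paper deduces it from type AI through $\Delta(\SO(n))\subseteq\Sp(n)$ and $\A\subseteq\Delta(\SU(n))$.

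In type AIII, your plan to pair $t_i$ with $t_{p-i+1}$ and reduce to block-diagonal $\SU(2)$ and $\SU(4)$ pieces is exactly the paper's. However, the $\SU(4)$ base case (Lemma~\ref{lem:SU4-suff}) needs a genuine computation, which you do not supply. The paper splits it into the regimes $x\ge\tfrac18$ and $x\le\tfrac18$, using different subfamilies of $\K$ in each.
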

The conditions in Theorem~\ref{thm:main-2} turn out to be exactly equivalent to the necessary condition given by Theorem~\ref{thm:main-1}. However, in general Theorem~\ref{thm:main-1} is \emph{not} sufficient to guarantee $\G = \K x \K y \K$: see the example with $\G = \Sp(2)$ at the end of \sectionsymbol\ref{sec:SU-block}.

This work was motivated by gate decomposition problems in quantum computing. In classical computing, arbitrary Boolean functions $\{0,1\}^n \to \{0,1\}^n$ are built up from a small list of basic functions: NOT, AND, NAND, OR, etc.  Similarly, an arbitrary quantum operation on $n$ qubits is a unitary matrix $U \in \U(2^n)$, and we would like to write it as a product of unitaries of some special kinds that are easier to implement. For example, one might fix a small set of gates $S \subseteq \U(2^n)$ and ask to decompose a given $U \in \U(2^n)$ as a product of elements of $S$ plus \emph{single-qubit gates}, i.e.\ elements of $\U(2)^{\otimes n}$. If $S$ is the set of \emph{controlled-not} (CNOT) gates, this is known to be possible for arbitrary $U$ \cite{shende-markov-bullock}.

To give an explicit example, let $(\CC^2)^{\otimes n}$ have basis $\{\ket{b} : b \in \{0,1\}^n \text{ a binary word}\}$, ordered in lex order. Taking $n=2$, the CNOT gate with control qubit 1 and target qubit 2 is the unitary 
\begin{equation*}
C = \begin{bmatrix} 1 & 0 & 0 & 0 \\ 0 & 1 & 0 & 0 \\ 0 & 0 & 0 & 1 \\ 0 & 0 & 1 & 0 \end{bmatrix}, \qquad \text{so } C(\ket{b_1 b_2}) = \begin{cases}
\ket{b_1 b_2} & \text{if $b_1 = 0$}\\
\ket{b_1 \operatorname{NOT}(b_2)} & \text{if $b_1 = 1$}
\end{cases}
\end{equation*}
One can show \cite{2q-optimal} that any element of $\U(4)$ has the form $L_1 C L_2 C L_3 C L_4$ where $L_1, L_2, L_3, L_4 \in \U(2) \otimes \U(2)$; that is, $LCLCLCL = \U(4)$ where $L = \U(2) \otimes \U(2)$. On the other hand, the set $LCLCL$ is strictly smaller than $\U(4)$.

A shorter factorization is possible: in \cite{berkeley-gate} it is shown that the \emph{Berkeley gate}
\begin{equation*}
B = \begin{bmatrix}
        \cos(\pi/8) & 0 & 0 & i\sin(\pi/8)\\
        0  & \cos(3/\pi/8) & i\sin(3\pi/8) & 0\\
        0  & i\sin(3/\pi/8) & \cos(3\pi/8) & 0\\
        i\sin(\pi/8) & 0 & 0 & \cos(\pi/8)
\end{bmatrix}
\end{equation*}
satisfies $LBLBL = \U(4)$. As explained in \sectionsymbol\ref{sec:gates}, this is in fact an example of the type AI case of Theorem~\ref{thm:main-2}.

Part of the motivation for this work was a search for gates generalizing the Berkeley gate, and related potentially novel decompositions for quantum gates. In types AI and AII, Theorem~\ref{thm:main-2} does not give much to work with: there is a unique double coset $\K U\K$ such that $\K U\K U\K = \SU(n)$. However, in type AIII there are infinitely many double cosets with this property, and we will discuss how to recover the recent \emph{block ZXZ decomposition} circuit \cite{block-ZXZ} from Theorem~\ref{thm:main-2}.

We start by reviewing Cartan decompositions and other preliminaries in Section~\ref{sec:cartan}. In Section~\ref{sec:necc}, we prove Theorem~\ref{thm:main-1} and discuss other necessary conditions for $\G = \K x \K y \K$. Sections \ref{sec:SU-SO}--\ref{sec:SU-block} prove the three cases of Theorem~\ref{thm:main-2}. Finally, in Section~\ref{sec:gates} we discuss some applications to gate decompositions in quantum computing.

\section{Lie group preliminaries}
\label{sec:cartan}
In this section we review some material on Lie groups, especially the Cartan decomposition with respect to a symmetric subgroup. The following notation will be fixed for the rest of the paper:
\begin{itemize}
\item $\G$ a simple compact connected Lie group with Lie algebra $\mathfrak{g}$, which we assume is a subalgebra of $\u(n)$
\item $\theta : \G \to \G$ an involutive automorphism 
\item $\G^\theta = \{g \in \G : \theta(g) = g\}$ its fixed point subgroup 
\item $\H_0$ denotes the connected component of the identity in a subgroup $\H \subseteq \G$ 
\item $\K$ a symmetric subgroup, i.e.\ one satisfying $(\G^\theta)_0 \subseteq \K \subseteq \G^\theta$.
\item $\k$ the 1-eigenspace of the derivative $d\theta : \g \to \g$, and $\p$ the (-1)-eigenspace.
\item $\a$ a maximal abelian subalgebra of $\p$
\item $\h$ a maximal abelian subalgebra of $\g$ containing $\a$
\item $\A = \exp(\a)$ and $\P = \exp(\p)$
\item $\Ad_g : \g \to \g$ the derivative of the conjugation map $\G \to \G, x \mapsto gxg^{-1}$ for $g \in \G$
\item $\ad_X : \g \to \g, Y \mapsto [X,Y]$ for $X \in \g$.
\end{itemize}
We also note that $\exp(\k) = \K_0$, the Fraktur form of ``k'' being, regrettably, ``$\k$''.

\subsection{Cartan decomposition}
The decomposition $\g = \k \oplus \p$ is called a \emph{Cartan decomposition} of $\g$, and it lifts to a decomposition of $\G$ also called a Cartan decomposition.
\begin{theorem} \cite[Ch. V, Theorem 6.7]{helgason}
    \label{thm:cartan-facts} \hfill
\begin{enumerate}[(a)]
    \item $\P = \bigcup_{k \in \K} k\A k^{-1}$.
    \item $\G = \K\P = \K\A\K$.
\end{enumerate}
\end{theorem}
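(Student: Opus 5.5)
We prove (a) first and then deduce (b) from it, working throughout with the $\Ad(\K)$-invariant inner product $\langle X,Y\rangle = -\tr(XY)$ on $\g \subseteq \u(n)$.

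\textbf{Proof of (a).} Two facts are needed:
\begin{itemize}
\item $\Ad_k(\p)=\p$ for $k \in \K$, because $d\theta$ commutes with $\Ad_k$ when $\theta(k)=k$;
\item $[\k,\p]\subseteq \p$ and $[\p,\p]\subseteq\k$.
\end{itemize}
The main step is the infinitesimal statement $\p=\bigcup_{k\in\K_0}\Ad_k(\a)$.

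Fix $H\in\a$ regular, meaning its centralizer in $\p$ is exactly $\a$. Such $H$ exist because the centralizer of a generic element is the common kernel of finitely many restricted roots. Given $X\in\p$, the function $k\mapsto\langle \Ad_k X, H\rangle$ on the compact group $\K_0$ attains a maximum at some $k_0$. Differentiating along $\exp(tZ)$ with $Z\in\k$ gives
\[
0=\langle [Z,\Ad_{k_0}X],H\rangle=\langle Z,[\Ad_{k_0}X,H]\rangle .
\]
Since $[\p,\p]\subseteq\k$ and this holds for every $Z\in\k$, we get $[\Ad_{k_0}X,H]=0$. Regularity of $H$ then forces $\Ad_{k_0}X\in\a$.

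Exponentiating, every element of $\P$ has the form $\exp(\Ad_k Y)=k\exp(Y)k^{-1}$ with $Y\in\a$, so $\P\subseteq\bigcup_{k}k\A k^{-1}$. The reverse inclusion holds because $k\exp(\a)k^{-1}=\exp(\Ad_k\a)\subseteq\exp(\p)$.

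\textbf{Proof of (b).} Once $\G=\K\P$ is established, (a) gives $\K\P=\K\K\A\K=\K\A\K$. So the only remaining claim is $\G=\K\P$.

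For $g\in\G$, set $s=g\theta(g)^{-1}$. This lies in the set $\mathcal S=\{g\theta(g)^{-1}\}$, which is connected and compact and satisfies $\theta(s)=s^{-1}$. Suppose we can write $s=\exp(2Y)$ with $Y\in\p$. Put $p=\exp(Y)$. Then $\theta(p)=p^{-1}$, so
\[
p\,\theta(p)^{-1}=p^2=s=g\theta(g)^{-1}.
\]
Hence $k:=p^{-1}g$ satisfies $\theta(k)=k$, i.e.\ $k\in\G^\theta$, and $g=pk\in\P\,\G^\theta$.

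\textbf{The main obstacle} is showing that every $s\in\mathcal S$ equals $\exp(2Y)$ for some $Y\in\p$, that is, $\mathcal S\subseteq\P^2=\P$. I would handle it by a geodesic argument on the compact symmetric space $\G/\G^\theta$. This space is complete, and its geodesics through the base point are $t\mapsto\exp(tY)\G^\theta$ with $Y\in\p$. Hopf--Rinow then gives $g\G^\theta=\exp(Y)\G^\theta$ for some $Y\in\p$, which directly yields $\G=\P\,\G^\theta=\G^\theta\P$, the last equality by inversion.

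A second obstacle is that this produces $\G^\theta$ rather than $\K$. To reduce to $\K\supseteq(\G^\theta)_0$, I need every component of $\G^\theta$ to meet $\P$; equivalently, $\G^\theta\subseteq\K_0\A$. For $k\in\G^\theta$, the subgroup $\Ad_k\a$ is again maximal abelian in $\p$. By the argument in (a), all maximal abelian subspaces of $\p$ are $\K_0$-conjugate, so after multiplying $k$ by an element of $\K_0$ we may assume $k$ normalizes $\a$. A further $\K_0$-adjustment via the restricted Weyl group, which is realized inside $N_{\K_0}(\a)$, makes $k$ centralize $\a$. Then $k\in Z_\G(\a)$, and I would show that $Z_\G(\a)\cap\G^\theta\subseteq\A\,\K_0$ by applying a maximal-torus argument inside $Z_\G(\a)$.

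I expect this component-counting step to be the most delicate part, and I would cite Helgason for the fact that $Z_{\K}(\a)$ meets every component of $\K$ modulo $\K_0$.
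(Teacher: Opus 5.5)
The paper gives no proof of this theorem; it only cites Helgason, and your argument follows the standard proof found there. Part (a) is correct as written. This includes the maximization of $k\mapsto\langle \Ad_k X,H\rangle$ over $\K_0$, and the step from $[\p,\p]\subseteq\k$ plus positive-definiteness of $\langle\cdot,\cdot\rangle$ on $\k$ to $[\Ad_{k_0}X,H]=0$. Using Hopf--Rinow to get $\G=\K\P$ is also the right idea for (b).

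The weak spot is in (b), and you created it yourself by working on $\G/\G^\theta$. Both your Cartan-double attempt and Hopf--Rinow on $\G/\G^\theta$ can only produce a factor known to lie in $\G^\theta$, since $k=p^{-1}g$ is only known to satisfy $\theta(k)=k$. You then need a component argument to get down to $\K$, and that argument is not justified as sketched. The ``further $\K_0$-adjustment via the restricted Weyl group'' assumes that $N_{\G^\theta}(\a)$ acts on $\a$ through $W$. A priori such an element only preserves the restricted root system. After a Weyl-group correction it could still act on the chamber as a nontrivial diagram automorphism. Ruling that out is as hard as the component fact you propose to cite, and that fact is normally deduced from $\G=\K_0\A\K_0$ itself, so citing it risks circularity.

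None of this is needed. Apply Hopf--Rinow directly on $\G/\K$ (or on $\G/\K_0$):
\begin{itemize}
\item It is compact, hence complete, with the $\G$-invariant metric coming from $\langle\cdot,\cdot\rangle|_{\p}$.
\item Its geodesics through $e\K$ are still $t\mapsto \exp(tY)\K$ for $Y\in\p$. One reason is that $\G/\K\to\G/\G^\theta$ is a finite Riemannian covering, since $\K$ has finite index in the compact group $\G^\theta$, so geodesics lift. Alternatively, $(\G,\K)$ is itself a Riemannian symmetric pair.
\end{itemize}
So every coset $g\K$ equals $\exp(Y)\K$, which gives $\G=\P\K=\K\P$, and (a) then finishes (b).

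You can delete both the Cartan-double paragraph and the component-counting paragraph. The fact that every component of $\G^\theta$ meets $\A$ then follows for free. Write $k\in\G^\theta$ as $k=k_1ak_2$ with $k_i\in\K_0$; this forces $a=k_1^{-1}kk_2^{-1}\in\A\cap\G^\theta$.
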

By ``the Cartan decomposition of $\G$'', we mean the expression $\G = \K\A\K$.

\begin{example} \label{ex:AI-KAK}
Say $\G = \SU(n)$ and $\theta(g) = \overline{g}$ and $\K = \SO(n)$. Then $\g = \su(n)$ is the set of $n \times n$ trace 0 skew-Hermitian matrices and $d\theta$ is again complex conjugation. Hence $\k = \so(n)$ is the subalgebra of real skew-symmetric matrices, and $\p$ the subspace of imaginary symmetric matrices. Then we can take $\a$ to be the subalgebra of imaginary diagonal matrices---this is a maximal abelian subalgebra of $\su(n)$, so of $\p$ as well.

$\P = \exp(\p)$ is now the set of unitary symmetric matrices, and Theorem~\ref{thm:cartan-facts}(a) says that any unitary symmetric matrix is diagonalizable by an orthogonal matrix. Part (b) says that any unitary matrix equals $OS$ where $O$ is real orthogonal and $S$ is unitary symmetric. An associated Cartan decomposition of $U \in \SU(n)$ is a factorization
\begin{equation*}
U = O_1 D O_2, \qquad O_1, O_2 \in \SO(n) \text{ and $D$ unitary diagonal}.
\end{equation*}
\end{example}

\begin{example}
Although we focus on the compact case here, Cartan decomposition does apply to more general Lie groups. For instance, if $\G = \GL(n,\CC)$ and $\theta(g) = (g^\dagger)^{-1}$, then the appropriate version of Theorem~\ref{thm:cartan-facts} yields 
\begin{itemize}
\item the polar decomposition of a matrix $M$: $M = UP$ where $U$ is unitary and $P$ is positive semidefinite.
\item the singular value decomposition of $M$: $M = UDV$ where $U,V$ are unitary and $D$ is real diagonal.
\end{itemize}
\end{example}

\subsection{Cartan doubles}
Theorem~\ref{thm:cartan-facts}(b) shows that any $\K$-double coset $\K x \K$ can be written as $\K x \K = \K a \K$ with $a \in \A$. To find $a$ from $x$ we use the \emph{Cartan double} $x\theta(x)^{-1}$.

\begin{theorem} \label{thm:cartan-double}
    If $\K x \K = \K y \K$, then $x\theta(x)^{-1}$ and $y\theta(y)^{-1}$ are conjugate by an element of $\K$. If $\G$ is simply connected, then the converse holds.
\end{theorem}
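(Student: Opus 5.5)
For the forward direction I would just compute. Suppose $\K x \K = \K y \K$, say $y = k_1 x k_2$ with $k_1, k_2 \in \K$. Since $\K \subseteq \G^\theta$, we have $\theta(k_i) = k_i$, and so
\begin{equation*}
y\theta(y)^{-1} = k_1 x k_2\, \theta(k_2)^{-1} \theta(x)^{-1} \theta(k_1)^{-1} = k_1 \bigl(x\theta(x)^{-1}\bigr) k_1^{-1}.
\end{equation*}
Thus the two Cartan doubles are conjugate by $k_1 \in \K$. This direction does not use simple connectivity.

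For the converse, assume $\G$ is simply connected and $x\theta(x)^{-1} = k\, y\theta(y)^{-1} k^{-1}$ for some $k \in \K$.
\begin{enumerate}[(1)]
\item Replace $y$ by $y' = ky$. Then $\K y' \K = \K y \K$, and by the computation above $y'\theta(y')^{-1} = k\,y\theta(y)^{-1}k^{-1}$. So we may assume $x\theta(x)^{-1} = y\theta(y)^{-1}$.
\item Rearranging this equality gives $y^{-1}x = \theta(y)^{-1}\theta(x) = \theta(y^{-1}x)$. Hence $g := y^{-1}x$ lies in $\G^\theta$, and $x = yg$.
\item It remains to see that $g \in \K$. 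This holds provided $\G^\theta = \K$. Since $(\G^\theta)_0 \subseteq \K \subseteq \G^\theta$, it is enough to know that $\G^\theta$ is connected.
\end{enumerate}
Given step (3), we get $x \in y\K$, and therefore $\K x\K = \K y \K$.

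The only nontrivial input is the connectedness statement in step (3). This is the classical theorem (due to Steinberg in general, and stated in Helgason for compact symmetric spaces) that the fixed-point group of an automorphism of a simply connected compact Lie group is connected. I would cite it rather than reprove it. The underlying reason is that $\G/\G^\theta$ embeds as the image of $g \mapsto g\theta(g)^{-1}$, and simple connectivity of $\G$ prevents any extra components of $\G^\theta$.

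This is also the main obstacle: it is the step where simple connectivity is genuinely used. Without it the converse fails. For example, for $\SO(n)$ inside a non-simply-connected $\G$, an element $g \in \G^\theta \setminus \K$ gives $x = g$ and $y = 1$ with equal Cartan doubles but distinct double cosets.
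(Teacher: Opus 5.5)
Your proof is correct. For the converse it takes a different and more informative route than the paper. The paper proves the forward direction by first writing $x = k_1 a k_2$ via the Cartan decomposition, so that $x\theta(x)^{-1} = k_1 a^2 k_1^{-1}$. For the converse it gives no argument, only a citation to Helgason.

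Your forward computation from $y = k_1 x k_2$ uses nothing beyond the fact that $\theta$ fixes $\K$ pointwise. The paper's version has the mild advantage of exhibiting the Cartan double as a $\K$-conjugate of $a^2$, which is the form used later, e.g.\ $x\theta(x)^{-1} = \exp(2\pi i X)$ in Lemma~\ref{lem:KAK-conj}.

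For the converse, your observation that equal Cartan doubles force $y^{-1}x \in \G^\theta$ isolates exactly what is needed. The converse holds for \emph{any} $\G$ precisely when $\K = \G^\theta$; your $g \in \G^\theta \setminus \K$ construction gives the failure in the other direction. Simple connectivity therefore enters only through the connectedness of $\G^\theta$ \cite[Ch.~VII, \sectionsymbol 8]{helgason}. This also explains the paper's $\PSU(2)$ counterexample: there $x$ is precisely an element of $\G^\theta \setminus \K$, which is a cleaner instance than your somewhat vague ``$\SO(n)$ inside a non-simply-connected $\G$'' remark.

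Your sentence about $\G/\G^\theta$ embedding via $g \mapsto g\theta(g)^{-1}$ is only a heuristic for that connectedness. The actual content is that $\G/\G^\theta$ is simply connected, which by the homotopy exact sequence of $\G^\theta \to \G \to \G/\G^\theta$ is equivalent to $\G^\theta$ being connected. Since you cite the theorem rather than rely on the heuristic, this is not a gap.
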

\begin{proof}
By Theorem~\ref{thm:cartan-facts} we can write $x = k_1 a k_2$ with $k_i \in \K, a \in \A$. Then
\begin{equation*}
x\theta(x)^{-1} = (k_1 a k_2)(k_1 a^{-1} k_2)^{-1} = k_1 a^2 k_1^{-1}.
\end{equation*}
If $\K y\K = \K x\K$ then we can write $y = k_3 a k_4$, so $y\theta(y)^{-1} = k_3 a^2 k_3^{-1}$ is $\K$-conjugate to $x\theta(x)^{-1}$. For the converse, see \cite[Ch. V, Theorem 6.7]{helgason}.
\end{proof}

The quantity $x\theta(x)^{-1}$ is sometimes called the \emph{Cartan double} of $x$. If $\G$ is simply connected, then Theorem~\ref{thm:cartan-double} says that $\K$-conjugacy classes of elements of $\A$ are in bijection with $\K$-double cosets. The theorem can fail if $\G$ is not simply connected. For instance, let $\G = \PSU(2)$ and $\K$ be the subgroup of diagonal matrices, the fixed-point subgroup of $\theta : g \mapsto \left[ \begin{smallmatrix} 1 & 0 \\ 0 & -1 \end{smallmatrix} \right]g\left[ \begin{smallmatrix} 1 & 0 \\ 0 & -1 \end{smallmatrix} \right]$. Then $x = \left[ \begin{smallmatrix} 0 & -1 \\ 1 & 0 \end{smallmatrix} \right]$ is obviously not in $\K e \K = \K$, but $x \theta(x)^{-1} = -e = e \theta(e)^{-1} = e$ in $\G$.  

\begin{example}
Continuing the case of $\G = \SU(n)$ and $\K = \SO(n)$ from Example~\ref{ex:AI-KAK}, the Cartan double of $U = O_1 D O_2$ is $U\overline{U}^{-1} = UU^T = O_1 D^2 O_1^T$. Thus we can diagonalize $UU^T$ to compute $D$ up to signs---and there is no loss of generality in assuming, say, every $D_{jj}$ has the form $e^{i\pi x}$ with $0 \leq x < 1$. To compute $O_1$, find a basis of \emph{real} orthogonal eigenvectors of $UU^T$. There are numerical issues to solve in implementing this, especially if $UU^T$ has repeated eigenvalues, but the Cartan decomposition $U = O_1 D O_2$ guarantees that it is possible. Then set $O_2 = O_1^T D^{-1} U$.
\end{example}

\subsection{Roots and fundamental alcoves}
We turn to the problem of nicely parameterizing the $\K$-double cosets, or equivalently the $\K$-conjugacy classes in $\A$ if $\G$ is simply connected. Let $\mathfrak{h}$ be a maximal abelian subalgebra of $\mathfrak{g}$ containing $\mathfrak{a}$. Write $\g^\CC = \g \otimes \CC$. The operators $\ad_H : \g^\CC \to \g^\CC, X \mapsto [H,X]$ commute for $H \in \mathfrak{h}$, and can be shown to be diagonalizable using the compactness of $\G$, so they are simultaneously diagonalizable. Hence $\g^\CC$ breaks up as a direct sum of eigenspaces $\bigoplus_{\alpha} (\g^\CC)_\alpha$, satisfying
\begin{equation*}
\ad_H(X) = [H,X] = \alpha(H)X \qquad \text{for all $H \in \h, X \in (\g^\CC)_\alpha$}.
\end{equation*}
Each eigenvalue $\alpha(H)$ depends linearly on $H$, i.e.\ $\alpha$ lies in the dual space $(\h^\CC)^*$. The zero eigenspace $(\g^\CC)_0$ is simply $\h^\CC$. The nonzero eigenvalues $\alpha$ such that $(\g^\CC)_{\alpha} \neq 0$ are called the \emph{roots} of $\g$ with respect to $\h$. Let $\Phi(\g)$ denote the set of roots (suppressing the dependence on $\h$).

\begin{definition}
The set $\Phi(\g,\k)$ of \emph{restricted roots} of $(\g,\k)$ (with respect to $\h$) is 
\begin{equation*}
\{\alpha \in \Phi(\g) : \alpha|_{\a} \neq 0\}.
\end{equation*}
The \emph{Stiefel diagram} $D(\g,\k)$ is the union of all hyperplanes $\{X \in \mathfrak{a} : \alpha(X) = n\}$ for some $\alpha \in \Phi(\g,\k)$ and $n \in \ZZ$. The connected components of the complement $\a \setminus D(k)$ are called \emph{alcoves}. A \emph{fundamental alcove} is one whose closure contains $0$.
\end{definition}
The notation $D(\g,\k)$ may seem underspecified since the Stiefel diagram depends on the particular choice of maximal abelian subalgebra $\a \subseteq \k$. However, any two choices of $\a$ are conjugate by $\K$ \cite[Ch. V, Lemma 6.3(ii)]{helgason}, so changing $\a$ only changes $D(\g,\k)$ by a linear isomorphism. The next theorem is fundamental for us: it shows how to parameterize spherical double cosets by points of a convex polyhedron.

\begin{theorem}[\cite{helgason}, Theorem 7.9(b), Ch. VII] \label{thm:fund-alcove}
    Let $\alc$ be a fundamental alcove for $(\g, \k)$. For $\G$ simply connected, the function $\overline{\alc} \to \K \backslash \G / \K$, $X \mapsto \K\exp(\pi i X)\K$ is a bijection.
\end{theorem}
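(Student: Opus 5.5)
The plan is to reduce the statement to a fundamental-domain statement for an affine reflection group acting on $\a$. Throughout, $\G$ is simply connected, and I write $a_X = \exp(\pi i X)$ for $X \in \a$. (Under the paper's conventions $\g \subseteq \u(n)$, so $\pi i X$ should be read as the element of $\a$ with $\exp(\pi i X) \in \A$. The normalization is arranged so that $\alpha(X)$ is real for $\alpha \in \Phi(\g,\k)$.)

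\textbf{Step 1: surjectivity reduces to $\a$.} By Theorem~\ref{thm:cartan-facts}(b), every double coset has the form $\K a_X \K$ for some $X \in \a$. So the map is onto as soon as every $X \in \a$ can be moved into $\overline{\alc}$ by a group $\Gamma$ of affine transformations of $\a$ satisfying $\K a_{\gamma X} \K = \K a_X \K$ for all $\gamma \in \Gamma$.

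\textbf{Step 2: the relevant equivalence on $\a$.} By the proof of Theorem~\ref{thm:cartan-double} and its converse for simply connected $\G$, $\K a_X \K = \K a_Y \K$ holds if and only if $a_X^2 = \exp(2\pi i X)$ and $\exp(2\pi i Y)$ are $\K$-conjugate. I would use two facts.
\begin{enumerate}[(i)]
\item Two elements of $\A$ are $\K$-conjugate if and only if they are conjugate under the restricted Weyl group $W = N_\K(\a)/Z_\K(\a)$. This is the standard argument: conjugate a maximal abelian subalgebra of the centralizer back into $\a$ using that all choices of $\a$ are $\K$-conjugate.
\item $\exp(2\pi i X) = \exp(2\pi i Y)$ if and only if $X - Y$ lies in the lattice $L = \{Z \in \a : \exp(2\pi i Z) = e\}$.
\end{enumerate}
Hence the fibres of $X \mapsto \K a_X \K$ are exactly the orbits of $\Gamma = W \ltimes L$.

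\textbf{Step 3 (main obstacle): identify $\Gamma$ with the affine Weyl group.} I must show that $\Gamma$ is the affine Weyl group generated by the reflections in the hyperplanes $\{\alpha(X) = n\}$, $\alpha \in \Phi(\g,\k)$, $n \in \ZZ$. Two inclusions are needed.
\begin{itemize}
\item \emph{Reflections lie in $\Gamma$.} Each restricted root gives a reflection in $W$, realized inside a rank-one subgroup $\SU(2)$ or $\SO(3)$. Composing the reflections in $\alpha = 0$ and $\alpha = 1$ gives a translation by a coroot-type vector $H_\alpha$, and I would check directly in that rank-one subgroup that $\exp(2\pi i H_\alpha) = e$.
\item \emph{$L$ is spanned by these translations.} This is where simple connectivity enters. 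Since $\G$ is simply connected, the kernel of $\exp$ on $\h$ is the coroot lattice of $\Phi(\g)$. One then has to restrict to $\a$, i.e.\ project coroots of $\Phi(\g)$ onto $\a$ and compare with the translations above, handling the factor of $2$ coming from $a^2$ versus $a$.
\end{itemize}
I expect this bookkeeping to be the delicate step, particularly for non-reduced restricted root systems, where $\alpha$ and $2\alpha$ are both restricted roots.

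\textbf{Step 4: conclude from the fundamental-domain property.} Once $\Gamma$ is the affine reflection group of the Stiefel diagram $D(\g,\k)$, the classical theory of affine reflection groups applies: the closure $\overline{\alc}$ of a chamber meets every orbit in exactly one point. This gives surjectivity via Step 1 and injectivity via Step 2, so $X \mapsto \K\exp(\pi i X)\K$ is a bijection $\overline{\alc} \to \K \backslash \G / \K$.
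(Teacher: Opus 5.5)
The paper gives no proof of this statement; it is quoted from Helgason. Your Steps 1, 2, 4 and the first bullet of Step 3 are sound, but together they only give surjectivity. Since $\Gamma = W\ltimes L$ preserves the Stiefel diagram ($\lambda(Z)\in\ZZ$ for $Z\in L$), one has $\Gamma = \affW\rtimes\Gamma_{\alc}$, where $\Gamma_{\alc}\cong L/L(\Phi(\g,\k)^\vee)$ is the stabilizer of $\alc$. The fibres of $\overline{\alc}\to\K\backslash\G/\K$ are then exactly the $\Gamma_{\alc}$-orbits. Injectivity is therefore \emph{equivalent} to the second bullet of Step 3, namely $L\subseteq L(\Phi(\g,\k)^\vee)$. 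That inclusion is the whole content of the theorem, not bookkeeping, and your proposal does not prove it. The paper's Lemma~\ref{lem:affW-orbit} has the same shape: its easy direction is your first bullet, and its converse is obtained by invoking this very theorem.

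The mechanism you suggest also fails as stated. Simple connectivity of $\G$ gives $L = \a\cap L(\Phi(\g)^\vee)$, which is an \emph{intersection}. Projecting coroots of $\Phi(\g)$ onto $\a$ generally overshoots. For $\Sp(n)\subseteq\SU(2n)$, in the coordinates of \sectionsymbol\ref{sec:SU-Sp}, the coroot $e_p-e_{n+j}$ projects to $\tfrac12(e_p-e_j)$, while the restricted coroots are $e_p-e_j$. So projection only yields $L\subseteq\tfrac12 L(\Phi(\g,\k)^\vee)$. Nor is the inclusion $\a\cap L(\Phi(\g)^\vee)\subseteq L(\Phi(\g,\k)^\vee)$ a formal property of an involution of a root system. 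Take $-s_{\alpha_1+\alpha_2}$ on $A_2$: its $(-1)$-eigenspace is $\RR\delta$ with $\delta=\alpha_1-\alpha_2$, and the coroot lattice meets it in $\ZZ\delta$. Yet the restricted roots $\pm\tfrac12\delta$ have coroot lattice $\ZZ\tfrac23\delta\not\ni\delta$. So any proof must use, in an essential way, that $\a$ is maximal abelian in $\p$.

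One standard way to supply this input is topological. $\G/\K$ is simply connected, and its singular set has codimension at least $2$, so the regular set is simply connected. The map $(kZ_\K(\a),X)\mapsto k\exp(\pi iX)\K$ is a covering $\K/Z_\K(\a)\times\alc\to(\G/\K)_{\mathrm{reg}}$ with connected total space and deck group $\Gamma_{\alc}$, which forces $\Gamma_{\alc}=1$. Alternatively, one can check the lattice identity case by case from Satake diagrams. Without some such input, your argument stops at surjectivity.
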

Here $\K \backslash \G / \K$ denotes the set of $\K$-double cosets in $\G$. From now on we work in a fixed fundamental alcove $\alc(\g,\k)$.
\begin{definition}
Given $x \in \G$ (assumed simply connected), let $a(x)$ be the unique point of $\overline{\alc(\g,\k)}$ with $\K a(x) \K = \K x \K$.
\end{definition}

\begin{example} \label{ex:AI-alcove}
    Say $\G = \SU(n)$ and $\K = \SO(n)$ and $\a = \h$ consists of the imaginary diagonal matrices, as in Example~\ref{ex:AI-KAK}. The roots with respect to $\h$ are $\epsilon_i - \epsilon_j$ for $i \neq j$, where $\epsilon_i$ is the linear functional sending $D \in \mathfrak{h}^\CC$ to $D_{ii}$. The restricted roots are no different, since $\a = \h$. Identify $\{{\bf x} \in \RR^n : \sum_i x_i = 0\}$ with $\h$ by the correspondence ${\bf x} \mapsto \diag(ix_1, \ldots, ix_n)$. Then the Stiefel diagram is the union of the hyperplanes $\{x_i-x_j = n\}$ over all $n \in \ZZ$.

    For instance, if $n = 3$ and we identify $\{x_1 + x_2 + x_3 = 0\}$ isometrically with $\RR^2$:
    \begin{center}
        \begin{tikzpicture}[scale=0.85]
            \filldraw[color=black!30!white] (0,0) -- (1,0.577) -- (0,1.154) -- (0,0);
            \filldraw (0,0) circle [radius=2pt];
            
            \draw (-2,-3) -- (-2,3);
            \draw (-1,-3) -- (-1,3);
            \draw (0,-3) -- (0,3);
            \draw (1,-3) -- (1,3);
            \draw (2,-3) -- (2,3);

            \draw (-2.598+2*0.5, -1.5-2*0.866) -- (2.598+2*0.5, 1.5-2*.866);
            \draw (-2.598+0.5, -1.5-0.866) -- (2.598+0.5, 1.5-0.866);
            \draw (-2.598, -1.5) -- (2.598, 1.5);
            \draw (-2.598-0.5, -1.5+0.866) -- (2.598-0.5, 1.5+0.866);
            \draw (-2.598-2*0.5, -1.5+2*0.866) -- (2.598-2*0.5, 1.5+2*0.866);

            \draw (-2.598+2*0.5, 1.5+2*0.866) -- (2.598+2*0.5, -1.5+2*0.866);
            \draw (-2.598+0.5, 1.5+0.866) -- (2.598+0.5, -1.5+0.866);
            \draw (-2.598, 1.5) -- (2.598, -1.5);
            \draw (-2.598-0.5, 1.5-0.866) -- (2.598-0.5, -1.5-0.866);
            \draw (-2.598-2*0.5, 1.5-2*0.866) -- (2.598-2*0.5, -1.5-2*0.866);


            \node[above right] at (2.598+0.5, 1.5-0.866) {\rotatebox{30}{$\scriptstyle x_2-x_3 = -1$}};
            \node[above right] at (2.598, 1.5) {\rotatebox{30}{$\scriptstyle x_2-x_3 = 0$}};
            \node[above right] at (2.598-0.5, 1.5+0.866) {\rotatebox{30}{$\scriptstyle x_2-x_3 = 1$}};

            \node[below right] at (2.598, -1.5) {\rotatebox{-30}{$\scriptstyle x_3-x_1 = 0$}};
            \node[below right] at (2.598-0.5, -1.5-0.866) {\rotatebox{-30}{$\scriptstyle x_3-x_1 = 1$}};
            \node[below right] at (2.598+0.5, -1.5+0.866) {\rotatebox{-30}{$\scriptstyle x_3-x_1 = -1$}};

            \node[below right] at (-1-0.2,-3) {\rotatebox{-45}{$\scriptstyle x_1-x_2=-1$}};
            \node[below right] at (0-0.2,-3) {\rotatebox{-45}{$\scriptstyle x_1-x_2=0$}};
            \node[below right] at (1-0.2,-3) {\rotatebox{-45}{$\scriptstyle x_1-x_2=1$}};

            \draw[red, thick, ->] (0,0) -- (2,0);
            \draw[red, thick, ->] (0,0) -- (-1, 1.732);
            \draw[red, thick, ->] (0,0) -- (-1, -1.732);
        \end{tikzpicture}
    \end{center}
    The three vectors, counterclockwise from right, are $(1,-1,0), (0,1,-1), (-1,0,1)$, the normals to the hyperplanes. A choice of fundamental alcove $\alc$ has been shaded. In general, $\{\mathbf{x} \in \RR^n : x_1 > \cdots > x_n > x_1-1, \sum_j x_j = 0\}$ can be taken as a fundamental alcove.

    Theorem~\ref{thm:fund-alcove} applied to this case therefore says that any $U \in \SU(n)$ can be written as $O_1 D O_2$ with $O_1,O_2 \in \SO(n)$ and $D = \diag(e^{\pi i a_1}, \ldots, e^{\pi i a_n})$ for a \emph{unique} vector $(a_1, \ldots, a_n)$ satisfying $a_1 \geq \cdots \geq a_n \geq a_1-1$ and $\sum_i a_i = 0$. 

\end{example}

\begin{example} \label{ex:GxG}
    We can always take $\G = \G' \times \G'$ where $\G'$ is a compact connected Lie group, and set $\theta(x,y) = (y,x)$. Then
    \begin{itemize}
        \item $\K$ is the diagonal subgroup $\{(x,x) : x \in \G'\}$ and $\mathfrak{p} = \{(X,-X) : X \in \mathfrak{g}'\}$.
        \item $\phi : \G/\K \to \G'$, $(x,y)\K \mapsto xy^{-1}$ is a diffeomorphism sending a left $\K$-orbit $\K p$ onto the conjugacy class of $\phi(p)$. Thus  $\K$-double cosets are equivalent to conjugacy classes in $\G'$.
        \item We can take $\mathfrak{a} = \{(X,-X) : X \in \mathfrak{h}'\}$ where $\mathfrak{h}'$ is a maximal abelian subalgebra of $\mathfrak{g}'$, and $\mathfrak{h} = \mathfrak{h}' \oplus \mathfrak{h}'$.
    \item The roots of $\mathfrak{g}$ with respect to $\mathfrak{h}$ are the functionals $\alpha \oplus 0$ and $0 \oplus \alpha$ where $\alpha$ ranges over the roots of $\mathfrak{g}'$ with respect to $\mathfrak{h}'$.
       \item The Stiefel diagram $D(\g,\k)$ is the union of the hyperplanes $\{(X,-X) : X \in \mathfrak{h}', \gamma(X,-X) = n\}$ over roots $\gamma$ of $\g$ and $n \in \ZZ$ as above.
    \end{itemize} 


    Forgetting about symmetric subgroups for the moment, define the \emph{Stiefel diagram} $D(\G')$ of $\G'$ to be the union of all hyperplanes $\{X \in \mathfrak{h}' : \alpha(X) = n\}$ over roots $\alpha$ and $n \in \ZZ$. A fundamental alcove $\alc(\G')$ is a connected component of $\h' \setminus D(\G')$ whose closure contains $0$.

    Now, $d\phi$ identifies the tangent space $(G/K)_{eK} = \mathfrak{p}$ with $\mathfrak{g}'$ and sends $(X,-X)$ to $2X$. Hence it identifies $\alc(\G')$ with $2\alc(\g, \k)$. Applying Theorem~\ref{thm:fund-alcove} then shows that for $\G'$ simply connected, each element of $\G'$ is conjugate to $\exp(2\pi i X)$ for a unique $X \in \overline{\alc(\G')}$.

    In the case $\G' = \SU(n)$, this just says a unitary matrix is unitarily similar to a diagonal matrix $\diag(e^{2i\pi x_1}, \ldots, e^{2i\pi x_n})$ for a unique $(x_1, \ldots, x_n) \in \RR^n$ with $x_1 \geq \cdots \geq x_n \geq x_1-1$. Note the extra factor of 2 compared to the conclusion of Example~\ref{ex:AI-alcove}.
\end{example}

\begin{lemma} \label{lem:KAK-conj}
    For $\G$ simply connected, $\K x \K = \K y \K$ if and only if $x\theta(x)^{-1}$ and $y\theta(y)^{-1}$ are conjugate by $\G$.
\end{lemma}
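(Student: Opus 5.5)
The forward direction is immediate from Theorem~\ref{thm:cartan-double}. If $\K x\K = \K y\K$, then $x\theta(x)^{-1}$ and $y\theta(y)^{-1}$ are conjugate by an element of $\K\subseteq\G$.

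For the converse, the plan is to reduce to the alcove parametrization and compare $\G$-conjugacy with $\K$-conjugacy.

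\emph{Reduction.} By Theorem~\ref{thm:fund-alcove} write $\K x\K = \K a\K$ and $\K y\K = \K b\K$ with $a = \exp(\pi i X)$, $b = \exp(\pi i Y)$ and $X,Y\in\overline{\alc}$. As in the proof of Theorem~\ref{thm:cartan-double}, the Cartan doubles are $\K$-conjugate to $a^2 = \exp(2\pi i X)$ and $b^2 = \exp(2\pi i Y)$. So the hypothesis says $\exp(2\pi iX)$ and $\exp(2\pi iY)$ are $\G$-conjugate. It suffices to prove this forces $X = Y$; Theorem~\ref{thm:fund-alcove} then gives $\K x\K = \K y\K$.

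\emph{Step 1: an element of the affine Weyl group.} Both $a^2$ and $b^2$ lie in the maximal torus $\exp(\h)$, and $\G$ is simply connected. Hence two elements of $\exp(\h)$ are $\G$-conjugate exactly when their logarithms differ by the affine Weyl group $W(\g,\h)\ltimes \Lambda$ of $\G$, where $\Lambda$ is the coroot lattice. So there is $w$ in this group with $w(2\pi i X) = 2\pi i Y$ modulo the kernel of $\exp$ on $\h$. Both points lie in $\a$.

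\emph{Step 2: descend to the restricted affine Weyl group.} I need to show that any element of the affine Weyl group of $\G$ carrying a point of $\a$ to a point of $\a$ can be replaced by an element of the restricted affine Weyl group of $(\g,\k)$. Its reflection group is generated by the reflections in the hyperplanes of $D(\g,\k)$, and $\overline{\alc}$ is a fundamental domain for it. The Weyl-group part is the classical Lie-algebra statement that $\Ad(\G)Z\cap\a = W(\g,\k)\cdot Z$ for $Z\in\a$ (equivalently, $\Ad(\G)Z\cap\p = \Ad(\K)Z$). Once Step 2 is established, $2X$ and $2Y$ lie in the same restricted affine Weyl orbit. I then need this to give $X = Y$, and here the factor of $2$ matters. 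The relation should be compared with the alcove for the doubled parametrization, as in Example~\ref{ex:GxG}, where $\alc(\G')$ corresponds to $2\alc(\g,\k)$. The goal is that the restricted lattice relevant to $\exp(2\pi i\,\cdot)$ restricted to $\a$ is exactly the one generating the affine reflections of $D(\g,\k)$ scaled appropriately. Since $\overline{\alc}$ is a fundamental domain, this yields $X = Y$.

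\emph{Main obstacle.} The hard part is the translation part of Step 2. I must show that a coroot-lattice vector of $\G$ that moves $2\pi iX\in\a$ into $\a$ can be traded for a translation in the lattice generated by the restricted coroots. I would first try averaging with $d\theta$: replace the translation $\lambda$ by $\tfrac12(\lambda - d\theta\lambda)$, its projection to $\a$. This projection is a combination of restricted coroots, and $d\theta$ preserves $\Lambda$. What I would then need to check is that this is compatible with simple connectivity of $\G$, via the identification of restricted coroots with projections of coroots. If averaging turns out too lossy, I would fall back on a case check over the classification of restricted root systems.
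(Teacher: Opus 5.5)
Your forward direction and your reduction of the converse (that $\exp(2\pi iX)\sim\exp(2\pi iY)$ with $X,Y\in\overline{\alc}$ must force $X=Y$) match the paper. The gap is Step 2. It carries the entire converse, you leave it open, and the tools you propose for it do not fit together.

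\begin{itemize}
\item \emph{The Weyl and translation parts cannot be handled separately.} If $w=\tau_\lambda w_0$ in the affine Weyl group of $\G$ sends $X$ to $Y$, neither $w_0(X)$ nor $\lambda$ need lie in $\a$. So the (true) fact $\Ad(\G)Z\cap\a=W(\g,\k)Z$ cannot simply be combined with a separate statement about translations.
\item \emph{The averaging trick fails as stated.} In type AII, the coroot $i\diag(1,-1,0,\ldots,0)$ of $\su(2n)$ projects to $\tfrac12(e_1-e_2)$ in the paper's coordinates on $\a\cong\{\mathbf{x}\in\RR^n:\sum_j x_j=0\}$. This is not in the restricted coroot lattice $\{\mathbf{x}\in\ZZ^n:\sum_j x_j=0\}$.
\item \emph{Your scaling is off.} Conjugacy classes are parametrized through $Z\mapsto\exp(2\pi iZ)$ by alcoves with walls $\alpha(Z)\in\ZZ$, and $\alc$ is cut out by the same walls. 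So the correct target is that $X$ and $Y$, not $2X$ and $2Y$, are related by $\affW(\g,\k)$. The statement for $2X,2Y$ would not suffice. For $\SU(2)$ with diagonal $\K$, $\overline{\alc}=[0,\tfrac12]$ with walls at $\tfrac12\ZZ$; the points $0.2$ and $0.8$ are related by the reflection in $\tfrac12$, yet $0.1\neq0.4$.
\end{itemize}

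The missing observation makes Step 2 unnecessary. Every root $\alpha$ of $\g$ either vanishes identically on $\a$ or restricts to an element of $\Phi(\g,\k)$, whose integer level sets are walls of $D(\g,\k)$. So on the connected set $\alc$ each root of the second kind stays in a fixed interval $(n_\alpha,n_\alpha+1)$, and roots of the first kind are $0$ there. Hence $\overline{\alc}$ lies in the closure of a single alcove of $D(\g)\subseteq\h$. To see this, push a point of $\alc$ slightly off $\a$ into a chamber for the roots vanishing on $\a$. For simply connected $\G$, such a closed alcove is a strict fundamental domain for $\G$-conjugacy via $Z\mapsto\exp(2\pi iZ)$ (Example~\ref{ex:GxG}, transported by the affine Weyl group of $\G$). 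So $\exp(2\pi iX)\sim\exp(2\pi iY)$ gives $X=Y$ at once, with no descent of Weyl group elements. This is the paper's argument. Its phrase ``$D(\g,\k)=D(\g)\cap\a$'' should be read as referring only to roots not vanishing on $\a$.
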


\begin{proof}
    Recall that
    \begin{align*}
        &D(\g) = \bigcup_{n \in \ZZ, \alpha} \{X \in \mathfrak{h} : \alpha(X) = n\} \qquad \text{($\alpha$ a root of $\mathfrak{g}$)} \\
        &D(\g,\k) = \bigcup_{n \in \ZZ, \alpha} \{X \in \mathfrak{a} : \alpha(X) = n\} \qquad \text{($\alpha$ a restricted root of $(\g,\k)$)}
    \end{align*}
    are the Stiefel diagrams of $\g$ (cf. Example~\ref{ex:GxG}) and of $(\g,\k)$ respectively.

    The forward direction of the proposition is immediate from Theorem~\ref{thm:cartan-double}. Conversely, suppose $x\theta(x)^{-1}$ and $y\theta(y)^{-1}$ are conjugate in $\G$. By Theorem~\ref{thm:cartan-facts} and Theorem~\ref{thm:fund-alcove} we can write $x = k_1 \exp(\pi i X)k_2$, $y = k_3\exp(\pi i Y)k_4$ with $k_i \in \K$ and $X,Y$ both in a fixed (closed) alcove, i.e. the closure of a connected component of $\a \setminus D(\g,\k)$. This means $X,Y$ are also in the same connected component (closure) of $\g \setminus D(\g)$, since $D(\g,\k) = D(\g) \cap \mathfrak{\a}$. By assumption $x\theta(x)^{-1} = \exp(2\pi i X)$ and $y\theta(y)^{-1} = \exp(2\pi i Y)$ are conjugate in $\G$, so by the conclusion of Example~\ref{ex:GxG} we must have $X = Y$.
\end{proof}

\subsection{Weyl groups}
The \emph{affine Weyl group} $\affW(\g,\k)$ is the group of affine transformations of $\mathfrak{a}$ generated by all reflections across the hyperplanes defining $D(\g,\k)$, i.e. the hyperplanes
\begin{equation*}
H_{\alpha,n} := \{X \in \a : \alpha(X) = n\} \qquad \text{for $\alpha \in \Phi(\g,\k), n \in \ZZ$}.
\end{equation*}
Let $s_{\alpha,n}$ denote the reflection across $H_{\alpha,n}$, so $s_{\alpha,n}(v) = v - ((v,\alpha)-n)\alpha^\vee$. The \emph{(finite) Weyl group} $W(\g,\k) \subseteq \affW(\g,\k)$ is the subgroup of linear transformations generated by all reflections $s_{\alpha,0}$.

It is clear from this definition that $\affW$ permutes the set of alcoves, and in fact this action is simply transitive \cite[Ch. VII, Corollary 7.4]{helgason}: given two alcoves $\alc_1, \alc_2$, there is a \emph{unique} $f \in \affW$ with $f(\alc_1) = \alc_2$.

We shall need two lattices closely related to the Stiefel diagram. First, the \emph{coroot} associated to a root $\alpha \in \Phi(\g,\k)$ is $\alpha^\vee = \tfrac{2}{(\alpha,\alpha)}\alpha$, and the \emph{coroot lattice} is the lattice $L(\Phi(\g,\k)^\vee)$ generated by the coroots. Let $\tau_X : \a \to \a$ denote translation by an element $X \in \a$.
\begin{lemma} \label{lem:coroot-translate}
If $X$ is an element of the coroot lattice $L(\Phi(\g,\k))^\vee$, then $\tau_X \in \affW(\g,\k)$.
\end{lemma}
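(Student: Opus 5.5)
The plan is to write $\tau_X$ for a coroot $X=\alpha^\vee$ as a product of two reflections in $\affW(\g,\k)$, and then use that translations compose additively.

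First fix a restricted root $\alpha \in \Phi(\g,\k)$ and an integer $n$. By the formula $s_{\alpha,n}(v) = v - ((v,\alpha)-n)\alpha^\vee$, I will compose two of these reflections:
\begin{equation*}
s_{\alpha,1}\, s_{\alpha,0}(v) = s_{\alpha,1}\bigl(v - (v,\alpha)\alpha^\vee\bigr) = v - (v,\alpha)\alpha^\vee - \bigl((v,\alpha) - (v,\alpha)(\alpha^\vee,\alpha) - 1\bigr)\alpha^\vee .
\end{equation*}
Since $(\alpha^\vee,\alpha) = \tfrac{2}{(\alpha,\alpha)}(\alpha,\alpha) = 2$, the bracket equals $-(v,\alpha)-1$. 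The whole expression therefore simplifies to $v + \alpha^\vee$, so $s_{\alpha,1}s_{\alpha,0} = \tau_{\alpha^\vee}$. Both factors are reflections across hyperplanes $H_{\alpha,0}$ and $H_{\alpha,1}$ of the Stiefel diagram, so $\tau_{\alpha^\vee} \in \affW(\g,\k)$. Its inverse $\tau_{-\alpha^\vee} = s_{\alpha,0}s_{\alpha,1}$ lies there as well.

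For a general element $X = \sum_j m_j \alpha_j^\vee$ of the coroot lattice, with $m_j \in \ZZ$, I will use $\tau_{X+Y} = \tau_X\tau_Y$. This gives $\tau_X = \prod_j \tau_{\alpha_j^\vee}^{m_j}$, which is in $\affW(\g,\k)$ because the latter is a group.

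The only point needing care is bookkeeping. The reflection formula implicitly identifies $\alpha$, a functional on $\h^\CC$, with the vector in $\a$ representing its restriction $\alpha|_\a$ via the invariant inner product. So $(v,\alpha)$ means $\alpha(v)$, and $\alpha^\vee$ is computed from $\alpha|_\a$. I will check that $(\alpha^\vee,\alpha)=2$ is consistent with this convention, and that $s_{\alpha,n}$ is really the orthogonal reflection fixing $H_{\alpha,n}$, using $(\alpha^\vee,\alpha)=2$ again. Beyond that the argument is a direct calculation.
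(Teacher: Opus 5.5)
Your proposal is correct and is essentially the paper's proof. Both reduce to a single coroot and write $\tau_{\alpha^\vee}$ as the product of reflections across two consecutive parallel hyperplanes, using $(\alpha^\vee,\alpha)=2$: the paper uses $s_{\alpha,2}s_{\alpha,1}$, you use $s_{\alpha,1}s_{\alpha,0}$, and in fact $s_{\alpha,n+1}s_{\alpha,n}=\tau_{\alpha^\vee}$ for every $n$. Your closing step, extending to arbitrary lattice elements because $\affW(\g,\k)$ is a group, just spells out the paper's ``it suffices''.
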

\begin{proof}
    It suffices to prove the claim assuming $X = \alpha^\vee$ is a coroot. In this case, a quick calculation using the fact that $(\alpha,\alpha^\vee) = 2$ shows that $s_{\alpha,2} s_{\alpha,1} = \tau_{\alpha^\vee}$.
\end{proof}

The next lemma shows that, at least in the simply connected case, the affine Weyl group $\affW$ exactly captures the indeterminacy in choosing an $A \in \mathfrak{a}$ such that $\K \exp(A) \K$ equals a fixed $\K$-double coset.
\begin{lemma} \label{lem:affW-orbit}
    Suppose $\G$ is simply connected and $X,Y \in \mathfrak{a}$. Then $\K\exp(\pi i X)\K = \K\exp(\pi i Y)\K$ if and only if $X, Y$ are in the same orbit of $\affW(\g,\k)$.
\end{lemma}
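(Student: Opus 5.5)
The plan is to combine Theorem~\ref{thm:fund-alcove} with the simple transitivity of $\affW(\g,\k)$ on alcoves. Both directions reduce to one claim: every $f \in \affW(\g,\k)$ satisfies $\K\exp(\pi i f(X))\K = \K\exp(\pi i X)\K$ for all $X \in \a$. Granting this claim, the argument runs as follows. The closure $\overline{\alc}$ is a fundamental domain for $\affW$: every $X \in \a$ lies in the closure of some alcove $\alc'$, and the unique $f$ with $f(\alc') = \alc$ sends $X$ into $\overline{\alc}$. Hence any $X, Y$ have representatives $X', Y' \in \overline{\alc}$ in their $\affW$-orbits with the same double cosets.

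For the reverse direction, if $X, Y$ are in the same orbit, the claim gives equal double cosets directly. For the forward direction, suppose the double cosets agree. Then the double cosets of $X'$ and $Y'$ agree, and injectivity in Theorem~\ref{thm:fund-alcove} forces $X' = Y'$. So $X$ and $Y$ are in the same orbit.

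To prove the claim, it suffices to treat the generating reflections $s_{\alpha,n}$. Here I would write $s_{\alpha,n} = \tau_{n\alpha^\vee/2}\, s_{\alpha,0}$, which follows from the formula $s_{\alpha,n}(v) = v - ((v,\alpha) - n)\alpha^\vee$ and the fact that $s_{\alpha,0}$ is linear. The two factors are then handled separately.

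\textbf{The linear part.} The finite Weyl group $W(\g,\k)$ is realized by $N_\K(\a)/Z_\K(\a)$; this is standard in Helgason (Ch. VII, §2). Concretely, $s_{\alpha,0} = \Ad_k|_\a$ for some $k \in \K$. Then $\exp(\pi i\, s_{\alpha,0}X) = k\exp(\pi i X)k^{-1}$, which has the same $\K$-double coset.

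\textbf{The translation part.} I would avoid half-coroot translations by not splitting naively. Instead I would use Lemma~\ref{lem:KAK-conj}: it suffices to show that $\exp(2\pi i\, s_{\alpha,n}X)$ and $\exp(2\pi i X)$ are $\G$-conjugate. Doubling turns the translation into $\tau_{n\alpha^\vee}$, and $\exp(2\pi i \cdot n\alpha^\vee) = e$ in the simply connected group. This holds because the coroot $\alpha^\vee$, viewed in $\h$, lies in the integral lattice $\ker(\exp 2\pi i\,\cdot)$, which equals the coroot lattice of $\g$ when $\G$ is simply connected. Since $\alpha^\vee$ commutes with $X$ in $\a$,
\[
\exp(2\pi i\, s_{\alpha,n}X) = \exp(2\pi i\, s_{\alpha,0}X) = k\exp(2\pi i X)k^{-1}.
\]

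The main obstacle is the justification that $2\pi i\,\alpha^\vee$ exponentiates to the identity. The restricted coroot $\alpha^\vee$ is computed for the restricted root system on $\a$, and one must check that it is an integer combination of coroots of $\g$ relative to $\h$. I would try first to verify this via the characterization that $\exp(2\pi i H) = e$ if and only if $\beta(H) \in \ZZ$ for all roots $\beta \in \Phi(\g)$, using simple connectivity. Then I would check that $\beta(\alpha^\vee) \in \ZZ$ using the $\RR$-span structure of restricted roots. If this becomes messy, I would fall back on citing the fundamental-domain statement in Helgason Ch. VII, Theorem 7.9, which underlies Theorem~\ref{thm:fund-alcove}.
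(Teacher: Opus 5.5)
Your overall structure matches the paper's proof. You show that every element of $\affW$ preserves the double coset by separating a finite Weyl group part (realized by $N_\K(\a)$) from a coroot translation killed by $\exp(2\pi i\,\cdot)$, and you pass through Lemma~\ref{lem:KAK-conj}. You then get the forward direction by moving $X,Y$ into $\overline{\alc}$ and using injectivity in Theorem~\ref{thm:fund-alcove}. The problem is the translation step, which is the only substantive input; you justify it incorrectly in two places.

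First, the formula $s_{\alpha,n}(v)=v-((v,\alpha)-n)\alpha^\vee$ gives $s_{\alpha,n}=\tau_{n\alpha^\vee}\,s_{\alpha,0}$, not $\tau_{n\alpha^\vee/2}\,s_{\alpha,0}$. Compare $s_{\alpha,2}s_{\alpha,1}=\tau_{\alpha^\vee}$ in Lemma~\ref{lem:coroot-translate}. Your ``doubling'' remedy would not rescue a genuine half-coroot translation. The Cartan double would then pick up the factor $\exp(2\pi i\cdot n\alpha^\vee/2)=\exp(\pi i\,n\alpha^\vee)$, not $\exp(2\pi i\,n\alpha^\vee)$. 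That factor can be nontrivial: it is $-I$ for $\SU(2)\supset\SO(2)$ with $n$ odd, and then the double coset really does change. Your displayed identity holds only because the translation part is in fact $n\alpha^\vee$. With that corrected, there is no half-coroot issue, and you are just using the paper's description of $\affW$ as generated by $W$ and coroot-lattice translations.

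Second, your proposed proof that $\exp(2\pi i\,\alpha^\vee)=e$ does not work. For simply connected $\G$, the condition $\beta(H)\in\ZZ$ for all $\beta\in\Phi(\g)$ cuts out the coweight lattice of $\g$. It only gives $\exp(2\pi i H)\in Z(\G)$; for instance, in $\SU(m)$ one has $\exp(2\pi i\,\omega_k)=e^{2\pi i k/m}I$. This is exactly the distinction used in the proof of Theorem~\ref{thm:main-1}. Centrality is not enough: with $z=\exp(2\pi i\,\alpha^\vee)$ central you would only get $\exp(2\pi i\,s_{\alpha,n}X)=z^n\,k\exp(2\pi i X)k^{-1}$.

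What you actually need is that each restricted coroot lies in the coroot lattice of $(\g,\h)$. This is true: take a root $\beta$ of $\g$ restricting to $\lambda$. Then $\lambda^\vee$ equals $\beta^\vee$, $\beta^\vee-\theta(\beta^\vee)$, or $2(\beta-\theta\beta)^\vee$ according as $\theta\beta=-\beta$, $(\beta,\theta\beta)=0$, or $(\beta,\theta\beta)>0$. This uses that $\beta+\theta\beta$ is never a root, because $\a$ is maximal abelian in $\p$. But it is a structural fact, not an integrality check. The paper takes it from Helgason Ch.~VII, Lemma~7.6; cite that rather than Theorem~7.9, which is the source of Theorem~\ref{thm:fund-alcove}.
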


\begin{proof}
    The affine Weyl group $\affW$ is generated by the finite Weyl group $W$ together with the subgroup of translations $\tau_H$ for $H$ in the coroot lattice $L(\Phi(G,K)^\vee)$ \cite[Ch. VII, Lemma 7.1]{helgason}. Consider these two subgroups separately:
    \begin{enumerate}[(a)]
    \item If $H \in L(\Phi^\vee)$, then $\exp(2\pi i H) = e$ \cite[Ch. VII, Lemma 7.6]{helgason}.
    \item Viewed as a group of linear transformations of $\a$, the quotient group
    \begin{equation*}
    \frac{\{k \in \K : \Ad_k(\a) \subseteq \a\}}{ \{k \in \K : \Ad_k(A) = A \text{ for all $A \in \a$}\} }
    \end{equation*}
    is the same as $W$ \cite[Ch. VII, \sectionsymbol 2]{helgason}. In particular, if $w \in W$ then $\exp(A)$ and $\exp(w(A))$ are $\K$-conjugate.
    \end{enumerate}
    Now take $f \in \affW$ and set $x = \exp(i\pi X)$ and $x_f = \exp(i\pi f(X))$, so $x\theta(x)^{-1} = \exp(2i\pi X)$ and $x_f\theta(x_f)^{-1} = \exp(2i\pi f(X))$. If $f$ has the form $\tau_H$, then these are equal by (a). If $f \in W$, then they are $\K$-conjugate by (b). By Lemma~\ref{lem:KAK-conj}, $\K x \K = \K x_f \K$ holds for either type of $f$, and hence for all $f \in \affW$.

Conversely, suppose $\K\exp(\pi i X)\K = \K\exp(\pi i Y)\K$. Let $f_X, f_Y$ be the unique elements of $\affW(G,K)$ with $f_X(X), f_Y(Y) \in \overline{\alc}$. By the previous paragraph we have $\K\exp(\pi i f_X(X))\K = \K \exp(\pi i X) \K$, and likewise for $Y$. But then $\K\exp(\pi i f_X(X))\K = \K\exp(\pi i f_Y(Y))\K$ and hence $f_X(X) = f_Y(Y)$ by Lemma~\ref{lem:KAK-conj}. 
\end{proof}

The second lattice we need is the \emph{coweight lattice}
\begin{equation*}
\cowt(\Phi(\g,\k)^\vee) = \{X \in \a : \alpha(X) \in \ZZ \text{ for all $\alpha \in \Phi(\g,\k)$}\}.
\end{equation*}
Note that these are exactly the points in the Stiefel diagram where the largest possible number of hyperplanes intersect. It is a basic fact about root systems that $(\alpha^\vee, \beta) \in \ZZ$ for any roots $\alpha,\beta$, so $\cowt(\Phi(\g,\k)^\vee)$ contains the coroot lattice $L(\Phi(\g,\k)^\vee)$. They need not be equal.

Translation by a coweight $X$ maps the hyperplane $H_{\alpha,n}$ to another hyperplane $H_{\alpha,n+\alpha(X)}$, hence preserves the Stiefel diagram and maps alcoves to alcoves. However, these translations do \emph{not} necessarily lie in $\affW$. This suggests the next definition.
\begin{definition}
Fix a fundamental alcove $\A$ for $(\g,\k)$. Given a coweight $X \in \cowt(\Phi(\g,\k)^\vee)$, let $f_X$ be the unique element of $\affW(\g,\k)$ satisfying $f_X(\A) = \A + X$.
\end{definition}

\begin{definition}
The \emph{extended affine Weyl group} $\extW(\g,\k)$ is the group of affine transformations of $\mathfrak{a}$ generated by $\affW$ and translations by $\hat{L}(\Phi(G,K)^{\vee})$.
\end{definition} 

The group appearing in Theorem~\ref{thm:main-1} is $\extW(\g,\k)_{\alc}$, the (setwise) stabilizer  of $\alc$ in $\extW(\g,\k)$. This group measures the difference between the coweight and coroot lattices. 
\begin{proposition} \cite[\sectionsymbol 5]{lam-postnikov} \label{prop:extW-stab} Sending $X \mapsto \tau_{-X} f_{X}$ defines an isomorphism
    \begin{equation*}
    \hat{L}(\Phi(G,K)^{\vee}) / L(\Phi(G,K)^{\vee}) \to \extW_{\alc}.
    \end{equation*}
\end{proposition}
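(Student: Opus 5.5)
The strategy is to check three things in turn: the map $X \mapsto \tau_{-X} f_X$ lands in $\extW_{\alc}$, it is a homomorphism, and it induces a bijection on the quotient by the coroot lattice.

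\emph{Well-definedness.} For a coweight $X$, translation $\tau_X$ preserves the Stiefel diagram, so $\A + X$ is an alcove. By simple transitivity of $\affW$ on alcoves, $f_X$ exists and is unique. Then $\tau_{-X} f_X$ lies in $\extW$ and maps $\alc$ to $\alc + X - X = \alc$, so it belongs to $\extW_{\alc}$.

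\emph{Homomorphism.} The key observation is that $\affW$ is normalized by translations by coweights. Conjugating the reflection $s_{\alpha,n}$ by $\tau_Y$ gives $s_{\alpha,n+\alpha(Y)}$, and $\alpha(Y) \in \ZZ$. Write $g_X = \tau_{-X} f_X$. Then $g_X g_Y = \tau_{-X} f_X \tau_{-Y} f_Y = \tau_{-X-Y} (\tau_Y f_X \tau_{-Y}) f_Y$. The element $h = (\tau_Y f_X \tau_{-Y}) f_Y$ lies in $\affW$, and $h(\alc) = \tau_Y f_X(\alc - Y + Y)$. Since $f_Y(\alc) = \alc + Y$, we get $\tau_{-Y}(\alc+Y) = \alc$, then $f_X$ sends $\alc$ to $\alc + X$, and $\tau_Y$ sends that to $\alc + X + Y$. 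By uniqueness $h = f_{X+Y}$, so $g_X g_Y = g_{X+Y}$.

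\emph{Kernel and surjectivity.} If $X$ is in the coroot lattice, then $\tau_X \in \affW$ by Lemma~\ref{lem:coroot-translate}, so $f_X = \tau_X$ and $g_X = \mathrm{id}$. Conversely, if $g_X = \mathrm{id}$ then $f_X = \tau_X$ is a translation in $\affW$. This is where one needs the structural fact that the translations in $\affW$ are exactly those by the coroot lattice, i.e.\ $\affW = W \ltimes L(\Phi^\vee)$, which follows from the cited description of $\affW$ as generated by $W$ and coroot translations. Hence $X \in L(\Phi^\vee)$.

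For surjectivity, every element of $\extW$ has the form $\tau_{-X} w$ with $X$ a coweight and $w \in \affW$, again using the normalization above. If this element fixes $\alc$, then $w(\alc) = \alc + X$, so $w = f_X$ by uniqueness.

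The main obstacle is the injectivity step, which rests on identifying the translation subgroup of $\affW$ with the coroot lattice. I would cite Helgason Ch.~VII Lemma~7.1 together with the semidirect-product structure: a translation in $W \ltimes L$ must have trivial $W$-part, since $W$ acts faithfully by linear maps.
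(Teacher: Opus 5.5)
Your proof is correct and complete. The paper gives no argument of its own here (it only cites Lam and Postnikov), so you have supplied a self-contained proof built from facts the paper already quotes: simple transitivity of $\affW$ on alcoves, Lemma~\ref{lem:coroot-translate}, and Helgason's description of $\affW$ as generated by $W$ and coroot translations.

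Your argument is the explicit form of the standard abstract one. Because translations by $\hat{L}$ normalize $\affW$, the group $\affW$ is normal in $\extW = \tau_{\hat{L}}\affW$. Simple transitivity gives $\affW \cap \extW_{\alc} = \{\mathrm{id}\}$ and $\extW = \affW\,\extW_{\alc}$. Hence $\extW_{\alc} \cong \extW/\affW \cong \tau_{\hat{L}}/(\tau_{\hat{L}} \cap \affW)$, and this last group is $\hat{L}/L$ precisely by your injectivity step $\tau_{\hat{L}} \cap \affW = \tau_{L}$.

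In the abstract version the homomorphism property comes for free, since the coset $\tau_{-X}\affW$ meets $\extW_{\alc}$ exactly in $\tau_{-X}f_X$. Your direct computation $g_Xg_Y = g_{X+Y}$ has the merit of exhibiting the concrete formula that the paper later uses in the proof of Theorem~\ref{thm:main-1}.

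Two small points to tighten:
\begin{enumerate}
\item Writing every element of $\affW$ as $\tau_\lambda w$ uses that $W$ preserves the coroot lattice, so that $w\tau_\lambda w^{-1} = \tau_{w(\lambda)}$.
\item If $\tau_X = \tau_\lambda w$ with $w \in W$, then $X = \lambda$ simply because $w$ is linear and hence fixes $0$. Faithfulness of the $W$-action is not what is needed.
\end{enumerate}
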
 

\begin{example} \label{ex:lattices}
    Consider again the case $\G = \SU(3), \K = \SO(3)$ from Example~\ref{ex:AI-alcove}. The (restricted) roots are $\{\pm \alpha_0, \pm \alpha_1, \pm \alpha_2\}$ where
    \begin{equation*}
    \alpha_1 = \epsilon_1 - \epsilon_2, \quad \alpha_2 = \epsilon_2-\epsilon_3, \quad \alpha_0 = \epsilon_3-\epsilon_1.
    \end{equation*}
    These are the same as the corresponding coroots.

\begin{center}
        \begin{tikzpicture}[scale=1.5]
            \filldraw[color=black!30!white] (0,0) -- (1,0.577) -- (0,1.154) -- (0,0);
            \filldraw[color=black!30!white] (2,0) -- (3,0.577) -- (2,1.154) -- (2,0);
            \filldraw[color=black!30!white] (0,0+1.154) -- (1,0.577+1.154) -- (0,1.154+1.154) -- (0,0+1.154);
            \filldraw (0,0) circle [radius=2pt];
            \node at (0.333, 0.577) {$\scriptstyle \alc$};
            \node at (2.333, 0.577) {$\scriptstyle \alc'$};
            \node at (0.333, 0.577+1.154) {$\scriptstyle \alc''$};

            \node[above right] at (0,0) {$\scriptstyle 0$};
            \node[left] at (1,0.577) {$\scriptstyle 1$};
            \node[below right] at (0,1.154) {$\scriptstyle 2$};

            \node[above right] at (2,0) {$\scriptstyle 0$};
            \node[left] at (3,0.577) {$\scriptstyle 1$};
            \node[below right] at (2,1.154) {$\scriptstyle 2$};

            \node[above right] at (0,1.154) {$\scriptstyle 2$};
            \node[left] at (1,0.577+1.154) {$\scriptstyle 0$};
            \node[below right] at (0,1.154+1.154) {$\scriptstyle 1$};
            
            \draw (-1,-2) -- (-1,3);
            \draw (0,-2) -- (0,3);
            \draw (1,-2) -- (1,3);
            \draw (2,-2) -- (2,3);

            \draw (-1.732+0.5+2, -1-0.866) -- (2.598+2*0.5, 1.5-2*.866); 
            \draw (-1.732+0.5, -1-0.866) -- (2.598+0.5, 1.5-0.866);
            \draw (-1.732, -1) -- (2.598, 1.5);
            \draw (-1.732-0.5, -1+0.866) -- (2.598-0.5, 1.5+0.866);

            \draw (-1.732+2*0.5, 1+2*0.866) -- (2.598+2*0.5, -1.5+2*0.866); 
            \draw (-1.732+0.5, 1+0.866) -- (2.598+0.5, -1.5+0.866);
            \draw (-1.732, 1) -- (2.598, -1.5);
            \draw (-1.732-0.5, 1-0.866) -- (2.598-0.5, -1.5-0.866);


            \node[above right] at (2.598+0.5, 1.5-0.866) {\rotatebox{30}{$\scriptstyle \alpha_2 = -1$}};
            \node[above right] at (2.598, 1.5) {\rotatebox{30}{$\scriptstyle \alpha_2 = 0$}};
            \node[above right] at (2.598-0.5, 1.5+0.866) {\rotatebox{30}{$\scriptstyle \alpha_2 = 1$}};

            \node[below right] at (2.598, -1.5) {\rotatebox{-30}{$\scriptstyle \alpha_0 = 0$}};
            \node[below right] at (2.598-0.5, -1.5-0.866) {\rotatebox{-30}{$\scriptstyle \alpha_0 = 1$}};
            \node[below right] at (2.598+0.5, -1.5+0.866) {\rotatebox{-30}{$\scriptstyle \alpha_0 = -1$}};

            \node[below right] at (-1-0.2,-2) {\rotatebox{-45}{$\scriptstyle \alpha_1=-1$}};
            \node[below right] at (0-0.2,-2) {\rotatebox{-45}{$\scriptstyle \alpha_1=0$}};
            \node[below right] at (1-0.2,-2) {\rotatebox{-45}{$\scriptstyle \alpha_1=1$}};

            \draw[red, thick, ->] (0,0) -- (2,0);
            \draw[red, thick, ->] (0,0) -- (-1, 1.732);
            \draw[red, thick, ->] (0,0) -- (-1, -1.732);
            \node[red, below] at (1,0) {$\alpha_1$};
            \node[red, below left] at (-0.5, 0.866) {$\alpha_2$};
            \node[red, above left] at (-0.5, -0.866) {$\alpha_0$};
        \end{tikzpicture}
    \end{center}
    Translation by $\alpha_1$ maps the fundamental alcove $\alc$ to $\alc'$, and one can see $\tau_{\alpha_1} = s_{\alpha_1,2}s_{\alpha_1,1} \in \affW$. 

    Let $\omega_0, \omega_1, \omega_2$ be the vertices of $\alc$ labeled by $0,1,2$. The coweight lattice, generated by $\omega_1, \omega_2$, consists of the points where hyperplanes intersect. Translation by $\omega_2$ maps $\alc$ to $\alc''$, but $\tau_{\omega_2}$ is \emph{not} an element of $\affW$. Instead, the element $f_{\omega_2} \in \affW$ sending $\alc$ to $\alc''$ is $s_{\alpha_2,1}s_{\alpha_0,-1}$, which one can see is not a translation by considering how the vertices $0,1,2$ are moved. The element $\tau_{-\omega_2}f_{\omega_2} \in \extW_\alc$ is the $60^\circ$ rotation of $\alc$ mapping $0,1,2$ to $2,0,1$.
\end{example}

We will use an explicit realization of $\extW_{\alc}$ due to Lam and Postnikov \cite{lam-postnikov}.  Choose simple roots $\alpha_1, \ldots, \alpha_r$ for $\Phi(\g,\k)^\vee$, and write $\alpha > 0$ or $\alpha < 0$ to indicate that a root is positive or negative with respect to this simple system. Define $\alpha_0$ by letting $-\alpha_0$ be the \emph{highest root}, i.e.\ the unique root such that $-\alpha_0 - \beta \geq 0$ for all positive roots $\beta$. Define integers
\begin{equation*}
    a_i = \begin{cases}
        -\alpha_0(\omega_i) & \text{for $i = 1, \ldots, r$}\\
        1 & \text{for $i = 0$},
    \end{cases}
\end{equation*}
Here, $\omega_1, \ldots, \omega_r$ are the \emph{fundamental coweights}: a dual basis to the simple roots $\alpha_1, \ldots, \alpha_r$ under the inner product $(-,-)$. By convention, $\omega_0 = 0$.
\begin{proposition}
The points $a_i^{-1}\omega_i$ for $i = 0, 1, \ldots, r$ are the vertices of $\overline{\alc}$.
\end{proposition}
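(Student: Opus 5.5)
We will first identify $\overline{\alc}$ explicitly as a simplex cut out by $r+1$ inequalities, and then read off its vertices by solving the linear systems obtained by turning all but one inequality into an equality.

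\textbf{The alcove as a simplex.} Let $\alpha_1,\ldots,\alpha_r$ be the chosen simple roots and $-\alpha_0$ the highest root. We claim we may take
\begin{equation*}
\alc = \{X \in \a : \alpha_i(X) > 0 \text{ for } i=1,\ldots,r, \ \ -\alpha_0(X) < 1\}.
\end{equation*}
To see this, write an arbitrary positive root as $\beta = \sum_{i\ge1} c_i\alpha_i$ with integers $c_i \ge 0$. For $X$ in this set,
\begin{equation*}
0 < \beta(X) \le -\alpha_0(X) < 1,
\end{equation*}
where the middle inequality uses that $-\alpha_0-\beta$ is a nonnegative combination of simple roots and $\alpha_i(X)>0$. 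So no hyperplane $H_{\beta,n}$ meets this open set. The set is also convex and open, and its closure contains $0$.

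It is a maximal connected component of $\a\setminus D(\g,\k)$, because each of its bounding hyperplanes is itself a wall $H_{\alpha_i,0}$ or $H_{-\alpha_0,1}=H_{\alpha_0,-1}$. Since all fundamental alcoves are permuted by the finite Weyl group $W$, it suffices to treat this one, with the simple system chosen compatibly. Since $\alpha_1,\ldots,\alpha_r$ form a basis of $\a^*$ (assuming, as we may in the simple case, that the restricted roots span $\a^*$), the closure $\overline{\alc}$ is a simplex defined by these $r+1$ inequalities. Its vertices are therefore the $r+1$ points where exactly $r$ of the defining inequalities are equalities.

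\textbf{Solving for the vertices.} If $\alpha_1,\ldots,\alpha_r$ all vanish, the point is $0=\omega_0$, and there $-\alpha_0(0)=0<1$, so this is a genuine vertex; it equals $a_0^{-1}\omega_0$.

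Otherwise, omit one equation $\alpha_i = 0$ with $i\ge1$ and impose $\alpha_j(X)=0$ for $j\ne i$, $j \ge 1$, together with $-\alpha_0(X)=1$. The first conditions force $X = c\,\omega_i$, since the $\omega_j$ form the dual basis to the $\alpha_j$. The last condition gives $c\,(-\alpha_0(\omega_i)) = c\,a_i = 1$, so $c = a_i^{-1}$.

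Here $a_i$ is the coefficient of $\alpha_i$ in the highest root, which is a positive integer. This is the standard fact that the highest root has full support, which follows from irreducibility of the restricted root system for $\G$ simple. Hence $a_i^{-1}\omega_i$ is well defined. It also satisfies the remaining inequality $\alpha_i(a_i^{-1}\omega_i) = a_i^{-1} > 0$, so it is a vertex.

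\textbf{Main obstacle.} The main subtlety is bookkeeping rather than substance. The first issue is the paper's phrase ``simple roots for $\Phi(\g,\k)^\vee$'': the roots must be paired with their dual coweights under $(-,-)$, identifying $\a$ with $\a^*$. The second is making sure the restricted root system is irreducible, possibly non-reduced (type $BC$), so that the highest root exists and has all coefficients positive. In the $BC$ case the highest root is $2\times$(a root), and the argument above still applies verbatim, with $\beta$ ranging over all positive roots, including doubled ones.
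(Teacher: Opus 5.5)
The paper gives no proof of this proposition. It is stated as a standard fact about alcoves of an irreducible root system, imported with the Lam--Postnikov setup, so there is no argument of the paper's to compare against. Your proof is the standard one and is correct. It also covers the non-reduced $BC$ case, and you rightly point out that the statement only makes sense when the simple system is chosen so that $\alc$ lies in its dominant chamber.

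Three small tidy-ups:

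\begin{enumerate}[(1)]
\item Invoke the positivity of every $a_i$ (full support of the highest root) before you call $\overline{\alc}$ a bounded simplex, not afterwards.
\item The cleanest finish uses the coordinates $y_i=\alpha_i(X)$. In these, $\overline{\alc}$ becomes $\{y_i\ge 0,\ \sum_i a_iy_i\le 1\}$, whose vertices $0$ and $a_i^{-1}e_i$ pull back to $0$ and $a_i^{-1}\omega_i$.
\item The spanning assumption has a one-line justification. An $X\in\a$ killed by every restricted root is killed by every root of $\g$, since the remaining roots vanish on $\a$ by definition. Hence $X$ is central in the semisimple $\g$, so $X=0$.
\end{enumerate}

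You can also skip the reduction via $W$. Take the simple system of the Weyl chamber containing the given $\alc$. Points of $\alc$ near $0$ then already lie in your set $\{\alpha_i>0,\ -\alpha_0<1\}$, so $\alc$ equals that set.
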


\begin{definition}
Call $i \in \{0,1,\ldots,r\}$ a \emph{cyclic descent} of $w \in W(\g,\k)$ if $w(\alpha_i) < 0$. Let $\cDes(w)$ be the set of cyclic descents of $w$, and define a coweight 
\begin{equation*}
    \delta_w = \sum_{i \in \cDes(w)} \omega_i
\end{equation*}
and a statistic
\begin{equation*}
    \cdes(w) = \sum_{i \in \cDes(w)} a_i.
\end{equation*}
\end{definition}

\begin{theorem}[\cite{lam-postnikov}, Proposition 6.4] \label{thm:extW-stab} Let $C(\g,\k) = \{w \in W(\g,\k) : \cdes(w) = 1\}$. Then $C$ is the subgroup $W \cap \{f_{X} : X \in \cowt(\Phi(\g,\k)^\vee)\}$, and $w \mapsto w\tau_{-\delta_w}$ is an isomorphism $C \to \extW_{\alc}$.
\end{theorem}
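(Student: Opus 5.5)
The plan is to analyze $\extW_{\alc}$ directly through linear parts. Every element $g$ of $\extW$ has the form $g = w\tau_{Y}$ with $w \in W(\g,\k)$ and $Y \in \cowt(\Phi(\g,\k)^\vee)$. This holds because $\affW$ is generated by $W$ and coroot translations (as used in the proof of Lemma~\ref{lem:affW-orbit}), and $W$ normalizes the coweight lattice. The map $g \mapsto w$ is a homomorphism $\extW \to W$ whose kernel is the translations. The only translation preserving the bounded set $\alc$ is the identity, so this homomorphism is injective on $\extW_{\alc}$. It therefore suffices to determine, for each $w \in W$, whether there is a coweight $\delta$ with $w\tau_{-\delta}(\alc) = \alc$, and to show that the answer is: exactly when $\cdes(w) = 1$, and then $\delta = \delta_w$. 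Granting this, $C$ is the image of $\extW_{\alc}$ under the linear-part homomorphism, hence a subgroup, and the inverse of that homomorphism is $w \mapsto w\tau_{-\delta_w}$, which is then an isomorphism $C \to \extW_{\alc}$.

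For the key computation, write $\alc = \{X : 0 < \gamma(X) < 1 \text{ for all } \gamma > 0\}$, which I will use in the equivalent form $\alpha_i(X) > 0$ for $i \ge 1$ and $-\alpha_0(X) < 1$. Then $w(X - \delta) \in \alc$ means that $0 < (w^{-1}\beta)(X) - (w^{-1}\beta)(\delta) < 1$ for all $\beta > 0$. Now $\gamma(X) \in (0,1)$ when $\gamma > 0$, and $\gamma(\delta) \in \ZZ$. So requiring this for all $X \in \alc$ forces, for every positive root $\gamma$,
\begin{equation*}
\gamma(\delta) = \begin{cases} 0 & \text{if } w\gamma > 0,\\ 1 & \text{if } w\gamma < 0, \end{cases}
\end{equation*}
and conversely these conditions suffice. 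Taking $\gamma = \alpha_i$ simple gives $\delta = \sum_{i \ge 1,\, w\alpha_i < 0} \omega_i = \delta_w$ (recall $\omega_0 = 0$). Taking $\gamma = -\alpha_0$ gives $\sum_{i \in \cDes(w),\, i \ge 1} a_i = [w\alpha_0 > 0] = 1 - [0 \in \cDes(w)]$, which is exactly $\cdes(w) = 1$. This proves necessity.

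The main obstacle is sufficiency: assuming $\cdes(w)=1$, I must verify the displayed condition for every positive root, not just the simple ones and the highest root. Set $c(\gamma) = \gamma(\delta_w)$, which is the sum of the coefficients of $\gamma$ on descent simple roots. Then $0 \le c(\gamma) \le c(-\alpha_0) \le 1$.

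1. If $0 \in \cDes(w)$, then $\cdes(w) = 1$ forces $w\alpha_i > 0$ for all $i \ge 1$. Hence $w = 1$ and $\delta_w = 0$, which is trivially fine.
2. Otherwise, if $c(\gamma) = 0$, then $\gamma$ is a nonnegative combination of simple roots that $w$ sends to positive roots, so $w\gamma > 0$.
3. For $c(\gamma) = 1$, I would induct downward on height from the highest root $-\alpha_0$, where $w(-\alpha_0) < 0$ holds by hypothesis. Given $\gamma \ne -\alpha_0$ with $c(\gamma)=1$, pick a simple $\alpha_i$ with $\gamma + \alpha_i$ a root. Then $c(\gamma+\alpha_i) \le 1$ forces $c(\alpha_i)=0$, so $w\alpha_i > 0$. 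Induction gives $w(\gamma+\alpha_i) < 0$, and I need to conclude $w\gamma < 0$. For this I would use that inversion sets are biconvex: if $\gamma$ and $\alpha_i$ were both non-inversions, their sum would be too.

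Finally, the identification $C = W \cap \{f_X\}$ follows from the same computation. If $w\tau_{-\delta}$ stabilizes $\alc$, then $w(\alc) = \alc - \delta$, so $w$ is the unique element $f_{-\delta}$ of $\affW$ sending $\alc$ to $\alc - \delta$. Conversely, a linear $f_X$ gives $\tau_{-X} f_X = f_X\tau_{-f_X^{-1}X} \in \extW_{\alc}$, consistent with Proposition~\ref{prop:extW-stab}.
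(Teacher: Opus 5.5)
Your proof is correct. The paper gives no argument of its own for this statement; it imports it from Lam--Postnikov, so yours is an independent, self-contained route.

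You work directly in the paper's setting. The linear-part homomorphism from $\extW = \{w\tau_Y : w \in W,\ Y \in \cowt\}$ to $W$ is injective on $\extW_{\alc}$. Then $w\tau_{-\delta}$ stabilizes $\alc$ exactly when $\gamma(\delta)$ equals $0$ or $1$ according as $w\gamma>0$ or $w\gamma<0$, for every positive root $\gamma$. From there:
\begin{itemize}
\item the simple roots force $\delta=\delta_w$;
\item the highest root forces $\cdes(w)=1$;
\item for sufficiency you reduce to $\delta_w \in \{0\}\cup\{\omega_j : a_j = 1\}$, then argue by positivity and by downward induction from the highest root.
\end{itemize}

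Compared with the citation, this buys several things. The subgroup property of $C$, which is not evident from the definition $\cdes(w)=1$, comes free as the image of a homomorphism. The translation part $\delta_w$ is explained rather than handed down. No matching of the paper's conventions with Lam--Postnikov's is needed. The argument also visibly covers non-reduced restricted root systems of type $BC$, which arise for type AIII with $p \neq n/2$: there every $a_i = 2$, only your Case 1 occurs, and $C$ is trivial. The citation, of course, buys brevity.

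Three small points to tidy. First, in your last paragraph, $w\tau_{-\delta}(\alc)=\alc$ gives $w(\alc)=\alc+w\delta$, not $\alc-\delta$. These differ in general; for example, for the $3$-cycle $c\in S_3$ one has $c\delta_c\neq-\delta_c$. Since $W$ preserves $\cowt$, the conclusion $w=f_{w\delta}$ with $w\delta\in\cowt$ is unaffected.

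Second, your root inequalities only show $w\tau_{-\delta}(\alc)\subseteq\alc$. Equality follows because elements of $\extW$ permute alcoves.

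Third, the downward induction uses the standard fact that, in an irreducible root system, every positive root other than the highest one becomes a root after adding a suitable simple root. Your description of $\alc$ also presupposes that $\alc$ lies in the dominant chamber of the chosen simple system, which the paper likewise assumes implicitly.
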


\begin{example}
If $\G = \SU(n)$ and $\K = \SO(n)$, we have $W = S_n$. A permutation $w = w_1 \cdots w_n$ has a cyclic descent at $i > 0$ if $w_i > w_{i+1}$, and a cyclic descent at $0$ if $w_n > w_1$. The permutations with exactly one cyclic descent are
\begin{equation*}
i(i+1) \cdots n 12\cdots (i-1) \quad \text{for $i = 1, \ldots, n$},
\end{equation*}
so $C \simeq \ZZ/n\ZZ$ is the cyclic subgroup generated by the long cycle $(1,2,\ldots,n)$. For example, the $60^\circ$ rotation of $\alc$ found in Example~\ref{ex:lattices} in fact generates $\extW_\alc$ in the case $n=3$.
\end{example}

\subsection{Basics on quantum Littlewood-Richardson coefficients}
This subsection is independent from the previous, and will only be used as technical background for \sectionsymbol\ref{subsec:P-AI}. Let $[n] = \{1, 2, \ldots, n\}$, and write ${[n] \choose k}$ for the set of $k$-subsets of $[n]$. For each triple $I,J,K \in {[n] \choose k}$ and integer $d \geq 0$, there is an associated \emph{quantum Littlewood-Richardson coefficient} $c_{IJ}^{K,d}$. These numbers arise as certain cohomological invariants of Grassmannians \cite{buch-QH}, as well as irreducible multiplicities in some $\GL(n)$-representations \cite{pawlowski-positroid-classes}. More relevantly here, they also appear in solving the \emph{multiplicative eigenvalue problem}: as $U_1, U_2$ range over all unitary matrices with fixed spectra $\Lambda_1, \Lambda_2$, what are the possible spectra $\Lambda_{12}$ of $U_1 U_2$ in terms of $\Lambda_1, \Lambda_2$? Agnihotri and Woodward solved this problem by showing that the possible $\Lambda_{12}$ are characterized by linear inequalities defined by quantum Littlewood-Richardson coefficients \cite{agnihotri-woodward}.

Giving a full definition of the coefficients $c_{IJ}^{K,d}$ would be rather involved, but fortunately we only require one simple combinatorial property they satisfy, for which the following sketch will suffice. Let $q$ be an indeterminate. For $0 < k < n$, the \emph{small quantum cohomology ring} of the Grassmannian of $k$-planes in $\CC^n$ is a $\ZZ[q]$-algebra $\operatorname{QH}_{k,n}$. It is free of rank ${n \choose k}$, with a distinguished basis $\{\sigma_I : I \in {[n] \choose k}\}$. The quantum Littlewood-Richardson coefficients express the structure constants of this basis:
\begin{equation} \label{eq:qLR}
\sigma_I \sigma_J = \sum_{d \geq 0, K \in {[n] \choose k}} c_{IJ}^{K,d} q^d \sigma_K.
\end{equation}

The only further fact we will need is that this ring is graded, with degrees 
\begin{equation} \label{eq:QH-grading}
\deg(\sigma_I) = \sum_{j=1}^{k} (n-k+j-I_j) = k(n-k) + {k+1 \choose 2} - \sum I, \qquad \deg(q) = n.
\end{equation}
This grading may look strange. A clearer picture emerges using the fact that ${[n] \choose k}$ is in bijection with the set of Young diagrams $\lambda$ contained in a $k \times (n-k)$ grid: in this indexing, the degree of $\sigma_{\lambda}$ is just the number of boxes in $\lambda$. However, \eqref{eq:QH-grading} will suffice for us. 

\begin{proposition} \label{prop:grading}
If $c_{IJ}^{K,d} \neq 0$, then $\sum I + \sum J - \sum K = k(n-k) + {k+1 \choose 2} - nd$.
\end{proposition}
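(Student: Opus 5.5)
The plan is to compare degrees on the two sides of the product formula \eqref{eq:qLR}, using only the grading \eqref{eq:QH-grading}. The key input is that $\operatorname{QH}_{k,n}$ is a graded $\ZZ[q]$-algebra: the product of homogeneous elements is homogeneous, of degree equal to the sum of their degrees. Here each basis element $\sigma_I$ is homogeneous of the degree given in \eqref{eq:QH-grading}, and $q$ has degree $n$.

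First, the left side $\sigma_I\sigma_J$ is homogeneous of degree $\deg\sigma_I + \deg\sigma_J$. On the right side, each term $q^d\sigma_K$ is homogeneous of degree $nd + \deg\sigma_K$. The elements $q^d\sigma_K$ form a $\ZZ$-basis of $\operatorname{QH}_{k,n}$, since the ring is free over $\ZZ[q]$ on the $\sigma_K$. So the expansion \eqref{eq:qLR} is unique, and comparing homogeneous components shows that every term with $c_{IJ}^{K,d}\neq 0$ must have
\begin{equation*}
\deg\sigma_I + \deg\sigma_J = nd + \deg\sigma_K.
\end{equation*}

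Second, substitute $\deg\sigma_I = N - \sum I$, where $N = k(n-k) + \binom{k+1}{2}$, and likewise for $J$ and $K$. The displayed equation becomes $2N - \sum I - \sum J = nd + N - \sum K$. Rearranging gives
\begin{equation*}
\sum I + \sum J - \sum K = N - nd,
\end{equation*}
which is the claimed identity.

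The only point needing care is the justification that the grading is compatible with the basis expansion: that $\operatorname{QH}_{k,n}$ is a graded ring in which the $\sigma_I$ and $q$ are homogeneous. We take this as the standing fact recorded at \eqref{eq:QH-grading}. Beyond that there is no real obstacle; the rest is bookkeeping.
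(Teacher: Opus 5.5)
Your proposal is correct and follows essentially the same route as the paper: compare degrees of $\sigma_I\sigma_J$ and $q^d\sigma_K$ using the grading, then substitute $\deg\sigma_I = k(n-k)+\binom{k+1}{2}-\sum I$ and rearrange. Your extra remark that the $q^d\sigma_K$ form a $\ZZ$-basis, which justifies comparing homogeneous components, is a reasonable addition that the paper leaves implicit.
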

\begin{proof}
Set $D = k(n-k) + {k+1 \choose 2}$. Then the only nonzero terms in \eqref{eq:qLR} occur when 
\begin{align*}
&\deg(\sigma_I) + \deg(\sigma_J) = \deg(q^d) + \deg(\sigma_K)\\
\Rightarrow\, &D - \sum I + D - \sum J = nd + D - \sum K.
\end{align*}
\end{proof}

\section{Necessary conditions for $\G = \K x \K y \K$} \label{sec:necc}
In this section we prove some necessary conditions on pairs $x,y \in \G$ with $\G = \K x \K y \K$, assuming $\G$ simply connected. First we reduce to a Lie algebra problem.
\begin{definition}
Let $\B(\G,\K) = \{X \in \overline{\alc(\g,\k)} : \K\exp(\pi i X)\K\exp(-\pi i X)\K = \G\}$.
\end{definition}
The next proposition shows that $\B(\G,\K)$ completely describes the pairs $x,y$ with $\K x \K y \K = \G$.
\begin{proposition}
    Assume $\G$ is simply connected. Then $\K x \K y \K = \G$ if and only if $a(x) = a(y^{-1})$ and $a(x) \in \B(\G,\K)$. 
\end{proposition}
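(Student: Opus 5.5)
The proof splits into two directions. The reverse one is a direct substitution. The forward one needs a necessary condition showing the two double cosets must be inverse to each other.

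\emph{Reverse direction.} Suppose $a(x) = a(y^{-1}) = X \in \B(\G,\K)$, and set $a = \exp(\pi i X)$. By the definition of $a(\cdot)$ together with Theorem~\ref{thm:fund-alcove}, we have $\K x \K = \K a \K$ and $\K y^{-1} \K = \K a \K$. Taking inverses in the second equation gives $\K y \K = \K a^{-1} \K$, since $\K$ is a group. The set $\K x \K y \K$ depends only on the two double cosets, so it equals $\K a \K a^{-1} \K$. This is $\G$ by the definition of $\B(\G,\K)$.

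\emph{Forward direction.} Suppose $\K x \K y \K = \G$. Then the identity $e$ lies in $\K x \K y \K$, so $e = k_1 x k_2 y k_3$ for some $k_i \in \K$. Solving gives $y = k_2^{-1} x^{-1} k_1^{-1} k_3^{-1}$, so $y \in \K x^{-1} \K$. Hence $\K y \K = \K x^{-1} \K$, and taking inverses, $\K y^{-1} \K = \K x \K$. By the uniqueness in Theorem~\ref{thm:fund-alcove}, this means $a(y^{-1}) = a(x)$. Call this common point $X$ and set $a = \exp(\pi i X)$.

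Now $\K x \K = \K a \K$, and $\K y \K = (\K y^{-1} \K)^{-1} = (\K a \K)^{-1} = \K a^{-1} \K$. Therefore $\G = \K x \K y \K = \K a \K a^{-1} \K$, which says exactly that $X \in \B(\G,\K)$.

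The only step that needs any care is the forward direction's use of $e \in \G$ to force $\K y \K = \K x^{-1} \K$. Everything else is bookkeeping with inverses of double cosets, $(\K g \K)^{-1} = \K g^{-1} \K$, and the uniqueness statement in Theorem~\ref{thm:fund-alcove}. Simple connectivity is used only to make $a(\cdot)$ well defined.
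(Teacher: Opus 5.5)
Your proof is correct and follows the paper's argument: the paper likewise uses $e \in \K x \K y \K$ to get $\K y \K = \K x^{-1} \K$, invokes the uniqueness in Theorem~\ref{thm:fund-alcove}, and then reads off $a(x) \in \B(\G,\K)$. The paper leaves the reverse direction implicit, and your substitution argument using $(\K g \K)^{-1} = \K g^{-1} \K$ correctly supplies it.
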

\begin{proof}
    If $\K x \K y \K = \G$, then certainly $e \in \K x \K y \K$, so $\K y \K = \K x^{-1} \K$. Therefore $a(y) = a(x^{-1})$ by Theorem~\ref{thm:fund-alcove}. Since $x = \exp(\pi i a(x))$ we see $a(x) \in \B(\G,\K)$.
\end{proof}

We can now state Theorem~\ref{thm:main-1} more precisely and prove it.
\begin{theorem*}[Theorem~\ref{thm:main-1}] 
    Suppose $\G$ is compact, simple, and simply connected, and $x = \exp(\pi i X) \in \B(\G,\K)$ where $X \in \overline{\alc}$. Then $f(X) = X$ for all $f \in \extW_{\alc}$.
\end{theorem*}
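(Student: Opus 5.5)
The plan is to use central elements of $\G$ coming from coweights, together with the Cartan double map. First, for a coweight $X_0 \in \cowt(\Phi(\g,\k)^\vee)$ set $c = \exp(\pi i X_0) \in \A$ and $z = c\,\theta(c)^{-1} = \exp(2\pi i X_0)$. I would check that $z$ is central in $\G$. For restricted roots this is immediate, since $\alpha(X_0)\in\ZZ$ means $\Ad_z$ acts trivially on each restricted root space. For the roots vanishing on $\a$ it also holds, because their root spaces commute with $\a$. By connectedness of $\G$, $z$ is then central.

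Next, for any $Y \in \a$ and $f = \tau_{-X_0} f_{X_0} \in \extW_\alc$, I would compute the Cartan double of $\exp(\pi i f(Y))$. By Lemma~\ref{lem:affW-orbit}, $\exp(\pi i f_{X_0}(Y))$ lies in $\K\exp(\pi i Y)\K$. Hence its Cartan double $\exp(2\pi i f_{X_0}(Y))$ is $\K$-conjugate to $\exp(2\pi i Y)$ by Theorem~\ref{thm:cartan-double}. Since $z$ is central, the Cartan double $\exp(2\pi i f(Y)) = z^{-1}\exp(2\pi i f_{X_0}(Y))$ is $\K$-conjugate to $z^{-1}\exp(2\pi i Y)$. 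Consequently, if $\exp(2\pi iY)$ and $z^{-1}\exp(2\pi i Y)$ are $\K$-conjugate and $Y\in\overline\alc$, then Lemma~\ref{lem:KAK-conj} gives $\K\exp(\pi i f(Y))\K = \K\exp(\pi iY)\K$. Since $f(Y)\in\overline\alc$, Theorem~\ref{thm:fund-alcove} then gives $f(Y)=Y$. By Proposition~\ref{prop:extW-stab} every element of $\extW_\alc$ has this form, so it suffices to prove: for every coweight $X_0$, the Cartan double of $x$ is $\K$-conjugate to $z^{\pm1}$ times itself.

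The core step is to extract this from $\G = \K x\K x^{-1}\K$. For $g = k_1 x k_2 x^{-1} k_3$, a direct computation gives
\begin{equation*}
g\theta(g)^{-1} = k_1\, y\theta(y)^{-1} k_1^{-1}, \qquad y = x k_2 x^{-1}.
\end{equation*}
I would apply this to the particular element $g = c^{-1}$, whose Cartan double $z^{-1}$ is central. From $y\theta(y)^{-1} = z^{-1}$ we get $x k_2 x^{-1}\theta(x) k_2^{-1}\theta(x)^{-1} = z^{-1}$. Rearranging, with $q = x^{-1}\theta(x)$ the Cartan double of $x^{-1}$, this gives $k_2 q k_2^{-1} = z^{-1} q$ (up to the sign of the exponent of $z$, which I would track carefully). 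So the Cartan double of $x^{-1}$ is $\K$-conjugate to $z^{\mp1}$ times itself. By the previous paragraph, applied to each coweight $X_0$ and hence to every element of $\extW_\alc$, this shows $a(x^{-1})$ is fixed by all of $\extW_\alc$.

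To get the statement for $X = a(x)$ itself, I would invert: $\G = \G^{-1} = \K x\K x^{-1}\K$ also equals $\K x^{-1}\K x\K$. Running the same argument with $x^{-1}$ in place of $x$ yields that $a(x)=X$ is fixed. The main obstacle I expect is the bookkeeping in these last two steps. First, one must keep the signs and the order of $z$ versus $z^{-1}$ straight. Second, one must confirm that the identification of $f(Y)$ via Lemma~\ref{lem:KAK-conj} uses $\G$-conjugacy and $\K$-conjugacy consistently. Third, one must check that $c\in\A$ really is attained, which holds since $\G = \K x\K x^{-1}\K$ contains every element. The centrality of $z$ is the other point needing care, specifically for roots of $\g$ that vanish on $\a$.
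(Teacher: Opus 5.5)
Your argument is essentially the paper's: a coweight $X_0$ gives a central $z=\exp(2\pi i X_0)$ with square root $c=\exp(\pi i X_0)\in\A$. You use $\G=\K x\K x^{-1}\K$ to place $c^{-1}$ in $\K x\K x^{-1}\K$, compare Cartan doubles, and finish with Lemma~\ref{lem:affW-orbit}, Lemma~\ref{lem:KAK-conj} and Theorem~\ref{thm:fund-alcove}. Several pieces are fine:
your centrality argument;
the reduction in your second paragraph;
the identity $k_2qk_2^{-1}=z^{-1}q$. The sign there is exactly $z^{-1}$, and it would not matter anyway, since $kqk^{-1}=zq$ gives $k^{-1}qk=z^{-1}q$.

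The one step that fails as written is the last. Inversion does not swap $x$ and $x^{-1}$: $(k_1xk_2x^{-1}k_3)^{-1}=k_3^{-1}xk_2^{-1}x^{-1}k_1^{-1}$. So $(\K x\K x^{-1}\K)^{-1}=\K x\K x^{-1}\K$ again, and $\G=\G^{-1}$ tells you nothing about $\K x^{-1}\K x\K$.

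The conclusion $\K x^{-1}\K x\K=\G$ is true, but for a different reason. Apply $\theta$, which fixes $\K$ pointwise and sends $x\in\A$ to $x^{-1}$; this gives $\theta(\K x\K x^{-1}\K)=\K x^{-1}\K x\K$. Better still, the detour is unnecessary. Since $x=\exp(\pi iX)\in\A$, you have $q=x^{-1}\theta(x)=x^{-2}$, so inverting $k_2qk_2^{-1}=z^{-1}q$ gives $k_2x^2k_2^{-1}=zx^2$. Thus the Cartan double $x^2=\exp(2\pi iX)$ of $x$ is itself $\K$-conjugate to $z$ times itself, and your second paragraph applies directly with $Y=X$.

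The paper avoids the issue by rewriting $\sqrt z=k_1xk_2x^{-1}k_3$ as $\sqrt z\,k_3^{-1}x=k_1xk_2\in\K x\K$. The Cartan double of $\sqrt z\,k_3^{-1}x$ involves $x^2$ directly. The paper then needs only the $\G$-conjugacy $x^2\sim x^2z$ together with Lemma~\ref{lem:KAK-conj}.
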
 

\begin{proof}
    Take $f \in \extW_{\alc}$. By Proposition~\ref{prop:extW-stab}, $f = f_Z^{-1} \tau_Z$ for some $Z$ in the coweight lattice $\hat{L}(\Phi^{\vee})$. Set $z = \exp(2\pi i Z)$ and $\sqrt{z} = \exp(\pi i Z)$. Then $z$ is in the center $Z(G)$ \cite[Ch. VII, Lemma 6.5]{helgason}. Since $\K x \K x^{-1} \K = \G$ by assumption, we have $\sqrt{z} \in \K x \K x^{-1} \K$, i.e.\ $\sqrt{z}kx \in \K x \K$ for some $k \in \K$. Write $\sim$ for conjugacy in $\G$ and $x \,^\K\!\!\sim^\K y$ to mean $\K x \K = \K y \K$. As $x \,^\K\!\!\sim^\K \sqrt{z}kx$, Lemma~\ref{lem:KAK-conj} gives 
    \begin{align*}
        x^2 = x \theta(x)^{-1} &\sim \sqrt{z}kx \cdot \theta(\sqrt{z}kx)^{-1} = \sqrt{z} kx^2 k^{-1} \sqrt{z}\\
        &\sim kx^2 k^{-1} z = kx^2 z k^{-1} \sim x^2 z,
    \end{align*}
    i.e.\ $\exp(2\pi i X) \sim \exp(2\pi i (X+Z)) = \exp(2\pi i \tau_Z(X))$. But now 
    \begin{align*}
        x = \exp(\pi i X) &\,^\K\!\!\sim^\K \exp(\pi i \tau_Z(X)) \qquad \text{(by Lemma~\ref{lem:KAK-conj})}\\
        &\,^\K\!\!\sim^\K \exp(\pi i f_Z^{-1} \tau_Z(X)) = \exp(\pi i f(X)) \qquad \text{(by Lemma~\ref{lem:affW-orbit})}
    \end{align*} Since both $X$ and $f(X)$ are in the (closed) fundamental alcove $\overline{\alc}$, this forces $X = f(X)$ by Theorem~\ref{thm:fund-alcove}.
\end{proof}

 Following \cite{peterson-crooks-smith}, we now describe a different method for deriving linear \emph{inequalities} on $\B(\G,\K)$. The reader who is only interested in the specific $\G = \SU(n)$ results of Theorem 1.2 can skip this material, because Theorem~\ref{thm:main-1} will suffice. However, it seems likely that this method gives stronger results than Theorem~\ref{thm:main-1} for more general $\G$.

Recall from Example~\ref{ex:GxG} that any $g \in \G$ is conjugate to $\exp(2\pi i X)$ for a unique $X \in \overline{\alc(\G)}$. As before we write $\sim$ for conjugacy in $\G$. Let 
\begin{equation*}
    \bigP(\G) = \{(X_0,X_1,X_2) \in \overline{\alc(\G)}^3 : \text{$\exists x_0,x_1,x_2 \in \G$ with $x_0 = x_1 x_2$ and $x_j \sim \exp(2\pi i X_j)$}\}.
\end{equation*}
In words, $\bigP(\G)$ records the possible conjugacy classes of elements $x_0,x_1,x_2$ with $x_0 = x_1 x_2$.  Also define 
\begin{align*}
    \bigP(\G,\K) = \{(X_0,X_1,X_2) \in \overline{\alc(\G,\K)}^3 :\, &\text{$\exists x_0 \in \G$, $x_1, x_2 \in \P$ with $x_0 = x_1 x_2$}\\
                                                    &\text{ and $x_j \sim \exp(2\pi i X_j)$}\}.
\end{align*}

\begin{definition} Let $X,Y$ be any sets and $Q \subseteq X \times Y$. Call $y \in Y$ a \emph{fat point} for $Q$ with respect to the projection $\pi_X : X \times Y \to X$ if $\pi_X(Q) \times \{y\} \subseteq Q$; that is, if for all $(x,y') \in Q$ we have $(x,y) \in Q$. Let $Q//\pi_X \subseteq Y$ denote the set of fat points for $Q$ with respect to $\pi_X$. \end{definition}

\begin{example}
Let $X = Y = \RR$ and let $Q$ be the convex hull of $(0,0), (0,1), (1,1), (\tfrac{3}{2}, \tfrac{1}{2}), (1,0)$:
\begin{center}
\begin{tikzpicture}[scale=1.2]
\filldraw[color=black!30!white] (0,0) -- (0,1) -- (1,1) -- (1.5,0.5) -- (1,0) -- (0,0);
\draw[color=red] (0,0) -- (0,1) -- (1,1) -- (1.5,0.5) -- (1,0) -- (0,0);
\end{tikzpicture}
\end{center}
Then $Q//\pi_X = \{\tfrac{1}{2}\}$ and $Q//\pi_Y = [0,1]$.
\end{example}

If $X \in \overline{\alc(\G)}$, let $\widetilde{X}$ denote the unique element of $\overline{\alc(\G)}$ such that $\widetilde{X}$ is $\Ad(\G)$-conjugate to $-X$, i.e.\ such that $\exp(2\pi i \widetilde{X}) \sim \exp(-2\pi i X)$.
\begin{lemma} \label{lem:B-fat-points} Assume $\G$ is simply connected. Let $\pi_1$ be the projection onto the first coordinate of $\overline{\alc(\G,\K)}^3$. Then $\B(\G,\K) = \{X \in \alc : (\widetilde{X},X) \in \bigP(\G,\K)// \pi_1\}$.
\end{lemma}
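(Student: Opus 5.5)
The plan is to translate the condition $\K x\K x^{-1}\K=\G$ into a statement about Cartan doubles, using the map $c(g)=g\theta(g)^{-1}$. By Lemma~\ref{lem:KAK-conj}, when $\G$ is simply connected a double coset $\K g\K$ is determined by the $\G$-conjugacy class of $c(g)$. Throughout, set $x=\exp(\pi iX)\in\A$; note $\theta(x)=x^{-1}$, so $c(x)=x^2=\exp(2\pi iX)$.

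\emph{Step 1: compute the Cartan doubles of $\K x\K x^{-1}\K$.} For $g=k_1xk_2x^{-1}k_3$ we have $\theta(g)=k_1x^{-1}k_2xk_3$, hence
\begin{equation*}
c(g)=k_1x k_2 x^{-2}k_2^{-1}xk_1^{-1}\sim x^2\cdot k_2x^{-2}k_2^{-1}.
\end{equation*}
Since every $h\in\G$ has $c(h)\in\P$ and $\G=\K\P$, the following holds: $\K x\K x^{-1}\K=\G$ if and only if every element $p\in\P$ is $\G$-conjugate to $x^2\,k x^{-2}k^{-1}$ for some $k\in\K$. Here we use that every $p\in\P$ is a Cartan double, namely $p=c(r)$ with $r=\exp(Y/2)$ when $p=\exp(Y)$, $Y\in\p$.

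\emph{Step 2: match this with the set $\bigP(\G,\K)$.}
\begin{itemize}
\item Elements of $\P$ that are $\G$-conjugate are $\K$-conjugate. Indeed, writing them as $c(r),c(r')$ with $r,r'\in\P$, Lemma~\ref{lem:KAK-conj} gives $\K r\K=\K r'\K$, and then Theorem~\ref{thm:cartan-double} gives $\K$-conjugacy.
\item Consequently, any pair $x_1,x_2\in\P$ with $x_1\sim x^{-2}$ and $x_2\sim x^2$ has the form $x_1=k x^{-2}k^{-1}$, $x_2=k'x^2k'^{-1}$. Then $x_1x_2\sim x^{-2}(k^{-1}k')x^2(k^{-1}k')^{-1}$.
\item Passing between this and the form $x^2kx^{-2}k^{-1}$ of Step 1 only requires conjugation and the symmetry swapping the roles of $X$ and $\widetilde X$; I will check this ordering carefully. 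This says that $(\widetilde X,X)$ being realized over $x_0$ means exactly that the class of $x_0$ is of the form in Step 1.
\end{itemize}

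\emph{Step 3: identify $\pi_1(\bigP(\G,\K))$ with the set of conjugacy classes meeting $\P$.}
\begin{itemize}
\item Every $p\in\P$ occurs, via $p=p\cdot e$.
\item Conversely, let $p_1,p_2\in\P$. Write $p_2=q^2$ and $p_1=r^2$ with $q,r\in\P$. Then $p_1p_2\sim qp_1q=qr\,rq=c(qr)\in\P$.
\end{itemize}
With Steps 2 and 3, the fat-point condition ``every $X_0\in\pi_1(\bigP(\G,\K))$ has $(X_0,\widetilde X,X)\in\bigP(\G,\K)$'' becomes ``every class meeting $\P$ contains some $x^2kx^{-2}k^{-1}$.'' By Step 1 this is exactly $X\in\B(\G,\K)$.

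The main obstacle is bookkeeping rather than an idea. Two things need care: the precise ordering and inversion of the coordinates $(\widetilde X,X)$ versus $(X,\widetilde X)$, and the mismatch between alcoves for $\G$ and for $(\g,\k)$ in the definition of $\bigP(\G,\K)$, together with the factor of $2$ noted in Example~\ref{ex:GxG}. For the latter I would identify $X\in\overline{\alc(\g,\k)}$ with the point of $\overline{\alc(\G)}$ parametrizing $\exp(2\pi iX)$.
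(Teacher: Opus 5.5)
Your proposal is correct and follows essentially the paper's route. Both arguments reduce membership in $\B(\G,\K)$ and the fat-point condition to the same statement: every class meeting $\P$ contains some $kx^{-2}k^{-1}x^2$ with $k\in\K$. Both get there by applying Lemma~\ref{lem:KAK-conj} to the Cartan double of $xkx^{-1}$. Your observation that $\G$-conjugate elements of $\P$ are $\K$-conjugate is what the paper uses, somewhat implicitly, when it identifies $A_1=\widetilde X$ and replaces $\exp(2\pi i\widetilde X)$ by $x^{-2}$.

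The ordering you flagged is harmless. By plain conjugation, $x^{-2}mx^2m^{-1}\sim m^{-1}x^{-2}mx^2\sim x^2kx^{-2}k^{-1}$ with $k=m^{-1}$, so no symmetry between $X$ and $\widetilde X$ is needed. The second bullet of your Step 3 is superfluous, since the definition of $\bigP(\G,\K)$ already restricts $X_0$ to $\overline{\alc(\G,\K)}$.
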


\begin{proof}
    Let $R$ be the set that we are trying to prove is equal to $\B(\G,\K)$. By definition, $R$ is the set of of $X \in \overline{\alc}$ such that $(Y,\widetilde{X},X) \in \bigP(\G,\K)$ for all $Y \in \overline{\alc}$.

    Set $x = \exp(\pi i X)$ where $X \in \B(\G,\K)$. Take $Y \in \overline{\alc(\G,\K)}$ and set $y = \exp(\pi i Y)$. Then $x\K x^{-1} \cap \K y \K \neq \emptyset$, so by Lemma~\ref{lem:KAK-conj} there exists $k \in \K$ with $xkx^{-1}\theta(xkx^{-1})^{-1} \sim y\theta(y)^{-1} = \exp(2\pi i Y)$, i.e.
    \begin{align} \label{eq:conj}
        &xkx^{-1}\theta(xkx^{-1})^{-1} = xkx^{-2}k^{-1}x \sim kx^{-2}k^{-1}x^2 \nonumber\\
        \Rightarrow\, &\exp(2\pi i Y) \sim (k\exp(2\pi i \widetilde{X})k^{-1}) \cdot \exp(2\pi i X).
    \end{align}
    This says $(Y,\widetilde{X},X) \in \bigP(\G,\K)$, so $X \in R$ since $Y$ was arbitrary.

    Conversely, suppose $X \in R$, meaning that for any $Y \in \alc(\G,\K)$ we have $\exp(2\pi i Y) \sim p_1 p_2$ for $p_1, p_2 \in \P$ with $p_1 \sim x^{-2}, p_2 \sim x^{2}$ where $x = \exp(\pi i X)$. By Theorem~\ref{thm:cartan-facts}, we can write $p_j = k_j \exp(\pi i A_j)k_j^{-1}$ where $A_j \in \overline{\alc(\G,\K)}$ and $k_j \in \K$. Then
    \begin{equation*}
        p_1 = k_1 \exp(2\pi i A_1) k_1^{-1} \sim x^{-2} = \exp(2\pi i \widetilde{X}),
    \end{equation*}
    forcing $A_1 = \widetilde{X}$ by Lemma~\ref{lem:KAK-conj} and Theorem~\ref{thm:fund-alcove}. Similarly, $A_2 = X$. Now 
    \begin{equation*}
        \exp(2\pi i Y) \sim p_1 p_2 = (k_1 x^{-2} k_1^{-1})(k_2 x^2 k_2^{-1}) \sim (k_2^{-1} k_1 x^{-2} k_1^{-1} k_2)\cdot x^2.
    \end{equation*}
    Note that this is the same expression as \eqref{eq:conj}. We can now reverse the arguments in the previous paragraph, starting from \eqref{eq:conj}, to deduce that $x\K x^{-1} \cap \K y \K \neq \emptyset$ for all $y \in \A$ and hence $X \in \B(\G,\K)$.
\end{proof}

Generalizing work of Agnihotri and Woodward in the $\G = \SU(n)$ case \cite{agnihotri-woodward}, Teleman and Woodward proved that, remarkably, the set $\bigP(\G)$ is a convex polytope described by explicit (if complicated) inequalities \cite{teleman-woodward}. Since $\bigP(\G,\K) \subseteq \bigP(\G) \cap \overline{\alc(\G,\K)}^3$, this implies some linear inequalities which the points of $\bigP(\G,\K)$ must satisfy. In turn, the next lemma shows how these inequalities imply linear inequalities on $\bigP(\G,\K) // \pi_1$.
\begin{lemma}
    Let $Q,R_1,R_2$ be convex polytopes with $Q \subseteq R_1 \times R_2$, and let $\pi_1, \pi_2$ be the projections onto the two factors. Then
    \begin{equation} \label{eq:fat-point-lem}
        Q // \pi_1 = \bigcap_v \pi_2((\{v\} \times R_2) \cap Q)
    \end{equation}
    where $v$ runs over the vertices of $\pi_1(Q)$. In particular, $Q // \pi_1$ is again a polytope.
\end{lemma}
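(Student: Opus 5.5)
The plan is to establish the two inclusions in \eqref{eq:fat-point-lem} directly from the definition of fat points, then use convexity of $Q$ to pass from vertices of $\pi_1(Q)$ to all of $\pi_1(Q)$. Unwinding the definition, $y \in Q//\pi_1$ means $\pi_1(Q) \times \{y\} \subseteq Q$, that is, $(x,y) \in Q$ for every $x \in \pi_1(Q)$. Meanwhile $y \in \pi_2((\{v\} \times R_2) \cap Q)$ means exactly $(v,y) \in Q$, using $Q \subseteq R_1 \times R_2$ so that every point of $Q$ with first coordinate $v$ lies in $\{v\}\times R_2$. So the right-hand side of \eqref{eq:fat-point-lem} is the set of $y$ with $(v,y) \in Q$ for every vertex $v$ of $\pi_1(Q)$.

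The inclusion $\subseteq$ is then immediate, since each vertex $v$ of $\pi_1(Q)$ lies in $\pi_1(Q)$. For $\supseteq$, suppose $(v,y) \in Q$ for all vertices $v$. Because $\pi_1(Q)$ is the linear image of a polytope, it is a polytope, hence the convex hull of its finitely many vertices. Any $x \in \pi_1(Q)$ can therefore be written as $x = \sum_v \lambda_v v$ with $\lambda_v \geq 0$ and $\sum_v \lambda_v = 1$. Convexity of $Q$ then gives $\sum_v \lambda_v (v,y) = (x,y) \in Q$. Hence $y \in Q//\pi_1$.

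For the ``in particular'' clause, each set $(\{v\}\times R_2)\cap Q$ is an intersection of two polytopes and so is a polytope, and its projection $\pi_2$ is again a polytope. A finite intersection of polytopes is a polytope, possibly empty. The only step needing care is the finiteness and convex-hull description of $\pi_1(Q)$, which is standard for projections of polytopes. Apart from that, I expect no real obstacle; the main work is simply unwinding the definitions correctly.
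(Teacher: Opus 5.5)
Your proposal is correct and takes the same approach as the paper's proof: $\subseteq$ is immediate from the definition of fat points, and $\supseteq$ holds because $Q$, being convex, contains the convex hull of the points $(v,y)$ over the vertices $v$ of $\pi_1(Q)$, and that hull is $\pi_1(Q)\times\{y\}$. You also justify the ``in particular'' clause (projections and finite intersections of polytopes are polytopes), which the paper leaves implicit.
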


\begin{proof}
    By definition, if $r_2 \in Q//\pi_1$ and $r_1 \in \pi_1(Q)$ then $r_2 \in \pi_2((\{r_1\} \times R_2) \cap Q)$. Conversely, if $r_2$ is in the right-hand side of \eqref{eq:fat-point-lem}, then for every vertex $v$ of $\pi_1(Q)$ we have $(v,r_2) \in Q$. But then $Q$ contains the convex hull of these points, namely $\pi_1(Q) \times \{r_2\}$.
\end{proof}

\begin{theorem} \label{thm:B-polytope}
    $\B(\G,\K)$ is contained in the polytope
    \begin{equation*}
        \{X \in \overline{\alc} : (\widetilde{X},X) \in (\bigP(\G) \cap \overline{\alc(\G,\K)}) // \pi\}
    \end{equation*}
    where $\pi$ is projection onto the first factor of $\overline{\alc(\G,\K)}^3$.
\end{theorem}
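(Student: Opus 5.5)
The plan is to combine Lemma~\ref{lem:B-fat-points} with a monotonicity property of fat points under enlarging the ambient set while keeping the first projection fixed. Write $Q = \bigP(\G,\K)$ and $Q' = \bigP(\G) \cap \overline{\alc(\G,\K)}^3$. By Lemma~\ref{lem:B-fat-points}, $X \in \B(\G,\K)$ if and only if $(\widetilde{X},X) \in Q//\pi$. So it suffices to show that $(\widetilde{X},X) \in Q//\pi$ implies $(\widetilde{X},X) \in Q'//\pi$. Granting this, the polytope claim follows from the preceding lemma on fat points of polytopes, applied to $Q'$. Here $Q'$ is a convex polytope, being the intersection of the Teleman--Woodward polytope $\bigP(\G)$ with a product of closed alcoves (identified inside $\overline{\alc(\G)}$ via the factor-of-two rescaling of Example~\ref{ex:GxG}). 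Finally, the set of $X$ with $(\widetilde{X},X)$ in a polytope is a polytope, since $X \mapsto \widetilde{X}$ is affine on $\overline{\alc(\G)}$ (it is $-w_0$ composed with an alcove symmetry).

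For the containment, I would first check that $Q \subseteq Q'$. This holds because an element $x_j \in \P$ is $\G$-conjugate to $\exp(2\pi i X_j)$ with $X_j \in \overline{\alc(\G,\K)}$, and the same triple of elements witnesses membership in $\bigP(\G)$. Next I would show that $\pi(Q) = \pi(Q') = \overline{\alc(\G,\K)}$.
\begin{itemize}
\item For $\pi(Q)$: given $Y$, take $x_1 = \exp(2\pi i Y) \in \P$ and $x_2 = e$. Then $(Y,0,Y)$ lies in $Q$, so every $Y$ occurs as a first coordinate.
\item For $\pi(Q')$: we have $\pi(Q') \subseteq \overline{\alc(\G,\K)}$ by definition, and $\pi(Q') \supseteq \pi(Q)$ since $Q \subseteq Q'$.
\end{itemize}
Now suppose $(\widetilde{X},X)$ is a fat point of $Q$. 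For any $Y \in \pi(Q') = \pi(Q)$ we get $(Y,\widetilde{X},X) \in Q \subseteq Q'$. Hence $(\widetilde{X},X) \in Q'//\pi$.

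The main obstacle is the bookkeeping of conventions rather than any deep step. Three points need care:
\begin{itemize}
\item We need $\pi(Q') = \pi(Q)$. Fat points are \emph{not} monotone in $Q$ in general, so the argument depends on the two first projections being equal, which is exactly what the full-projection argument above provides.
\item The normalization $\exp(2\pi i X_j)$ versus $\exp(\pi i X_j)$ must be handled correctly when embedding $\overline{\alc(\G,\K)}$ in $\overline{\alc(\G)}$.
\item Convexity of $\bigP(\G)$ must be quoted from \cite{teleman-woodward} to justify the word ``polytope.''
\end{itemize}
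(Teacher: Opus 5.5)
Your proposal is correct and follows essentially the paper's own argument, which the paper leaves implicit: Lemma~\ref{lem:B-fat-points}, the containment $\bigP(\G,\K) \subseteq \bigP(\G) \cap \overline{\alc(\G,\K)}^3$, and the fat-point lemma for polytopes; your check that both first projections equal $\overline{\alc(\G,\K)}$ is precisely the step the paper does not state. There are two harmless slips: with $x_1 = \exp(2\pi i Y)$ and $x_2 = e$ the triple you obtain is $(Y,Y,0)$ rather than $(Y,0,Y)$, and no factor-of-two rescaling is involved, because both $\bigP(\G)$ and $\bigP(\G,\K)$ are defined with $\exp(2\pi i X_j)$, so $\overline{\alc(\G,\K)}$ sits inside $\overline{\alc(\G)}$ by plain inclusion, which is what your containment argument actually uses.
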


In every case in which we are able to describe $\B(\G,\K)$, it turns out that the containment of Theorem~\ref{thm:B-polytope} is actually an equality. Given this, it seems reasonable to suspect that the containment $\bigP(\G,\K) \subseteq \bigP(\G) \cap \overline{\alc(\G,\K)}$ is also actually an equality. In the case $\G = \SU(n), \K = \SO(n)$, this has been proven by Falbel and Wentworth \cite{falbel-wentworth}, which we will use to compute $\B(\SU(n),\SO(n))$ exactly in the next section.

\section{Type AI: $\G = \SU(n)$, $\K = \SO(n)$} \label{sec:SU-SO}
We have worked out some details of this case in previous examples, but to summarize:
\begin{itemize}
    \item $\mathfrak{k} = \mathfrak{so}(n)$, the space of real skew-symmetric matrices.
    \item $\mathfrak{p}$ consists of the matrices $iS \in \mathfrak{su}(n)$ with $S$ real symmetric.
    \item Take $\mathfrak{a} \subseteq \mathfrak{p}$ as the matrices $iD$ with $D$ real diagonal of trace 0, so $\mathfrak{h} = \mathfrak{a}$.
    \item The restricted roots are the same as the usual roots $\epsilon_p-\epsilon_q$ of $\mathfrak{g} \otimes \CC$.
    \item Take $\{\epsilon_p-\epsilon_q : 1 \leq p < q \leq n\}$ as positive roots, and simple roots $\alpha_i = \epsilon_i - \epsilon_{i+1}$.
    \item The Stiefel diagram is $\bigcup_{n \in \ZZ, p \neq q} \{\mathbf{x} \in \RR^n : x_p-x_q = n, \sum x_j = 0\}$.
    \item Take $\alc = \{\mathbf{x} \in \RR^n : x_1 > \cdots > x_n > x_1 - 1, \sum x_j = 0\}$ as a fundamental alcove.
\end{itemize}

\subsection{The group $C(\su(n), \so(n))$}
Let us work out in detail what Theorem~\ref{thm:extW-stab} says concretely. The highest root is $-\alpha_0 = \epsilon_1 - \epsilon_n = \alpha_1 + \cdots + \alpha_n$, and $a_i = 1$ for all $i$. The fundamental coweights are
\begin{equation*}
    \omega_k = (\overbrace{\tfrac{k}{n}, \ldots, \tfrac{k}{n}}^{n-k}, \overbrace{\tfrac{k-n}{n}, \ldots, \tfrac{k-n}{n}}^{k}), \quad k = 1, \ldots, n-1;
\end{equation*}
recall we also set $\omega_0 = 0$. A permutation $w \in W \simeq S_n$ has a cyclic descent at $i$ if $w(\alpha_i) = \epsilon_{w(i)} - \epsilon_{w(i+1)}$ is a negative root, i.e. $w(i) > w(i+1)$. Note that this is still correct in the case $i = 0$ if we interpret $w(0)$ to mean $w(n)$. Therefore $C$ consists of the permutations with exactly one cyclic descent, i.e.\ those in the cyclic group generated by the long cycle $c = 23\cdots n1 = (1\,2\, \cdots \, n)$.

\begin{lemma} \label{lem:AI-fixed-point}
The only point of $\overline{\alc(\SU(n),\SO(n))}$ fixed by $\extW_{\alc}$ is the centroid 
\begin{equation*}
    \frac{\omega_0 + \cdots + \omega_{n-1}}{n} = (\tfrac{n-1}{2n}, \tfrac{n-3}{2n}, \ldots, -\tfrac{n-3}{2n}, -\tfrac{n-1}{2n}).
\end{equation*}
\end{lemma}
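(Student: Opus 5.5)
By Theorem~\ref{thm:extW-stab}, $\extW_{\alc}$ is the cyclic group of order $n$ generated by $g = c\,\tau_{-\delta_c}$, where $c = (1\,2\,\cdots\,n)$ is the long cycle. The plan is to compute $g$ explicitly as an affine map of $\a \cong \{\mathbf{x} \in \RR^n : \sum x_j = 0\}$. We then show that $g$ permutes the vertices $\omega_0, \ldots, \omega_{n-1}$ of $\overline{\alc}$ cyclically, and deduce that the unique fixed point is the centroid.

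\textbf{Step 1: $g$ permutes the vertices.} Any $f \in \extW_{\alc}$ is an affine map preserving $\alc$, hence preserving the simplex $\overline{\alc}$. It therefore permutes the vertices $\omega_0, \ldots, \omega_{n-1}$; here all $a_i = 1$, so the vertices are exactly the $\omega_i$. To identify this permutation, compute: $c$ has its unique cyclic descent at the $i$ with $c(i) > c(i+1)$, which is $i = n-1$ ($c(n-1) = n$, $c(n) = 1$). So $\delta_c = \omega_{n-1}$, and $g(\mathbf{x}) = c(\mathbf{x} - \omega_{n-1})$, with $c$ acting on $\RR^n$ by permuting coordinates.

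Apply this to $\omega_k$. The vector $\omega_k - \omega_{n-1}$ has entries shifted by the constant $-(n-1-k)/n$, except that some $-1$ offsets move. Applying the coordinate rotation $c$, one checks that $g(\omega_k) = \omega_{k+1}$ or $\omega_{k-1}$, indices taken mod $n$. The check of $g(\omega_0) = g(0) = -c(\omega_{n-1})$ already fixes the direction: $-c(\omega_{n-1})$ is a vector with one entry $(n-1)/n$ and the rest $-1/n$, placed so that it equals $\omega_1$. Either direction gives an $n$-cycle on the vertices, and that is all the argument needs. I would verify one general $k$ carefully, being careful about which coordinate convention $c$ acts by. Example~\ref{ex:lattices} ($n=3$, rotation $0,1,2 \mapsto 2,0,1$) confirms the $n$-cycle behavior.

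\textbf{Step 2: fixed points.} Write a point of $\overline{\alc}$ in barycentric coordinates, $X = \sum_k \lambda_k \omega_k$ with $\lambda_k \ge 0$ and $\sum_k \lambda_k = 1$. Since $g$ is affine and the barycentric coordinates with respect to an affinely independent set are unique, $g(X) = \sum_k \lambda_k \omega_{k\pm1}$. So $g(X) = X$ forces $\lambda_k = \lambda_{k\pm1}$ for all $k$, i.e.\ all $\lambda_k = 1/n$. Conversely, the centroid is fixed by $g$, hence by the whole group $\extW_{\alc}$ it generates. It remains to evaluate the centroid explicitly: coordinate $j$ of $\sum_{k=0}^{n-1} \omega_k$ is
\[
\sum_k \frac{k}{n} - \#\{k : j > n-k\},
\]
which simplifies to $(n-1)/2 - (j-1)$. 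Dividing by $n$ gives $(n-2j+1)/(2n)$, matching the stated vector.

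The main obstacle is Step 1: getting the index bookkeeping of $c(\mathbf{x} - \omega_{n-1})$ right. A cleaner alternative avoids computing $g$ exactly. The group $\extW_{\alc}$ acts on the $n$ vertices and, by Proposition~\ref{prop:extW-stab}, is isomorphic to $\hat{L}/L \cong \ZZ/n$. Its action on the vertices is faithful, since an affine map of a simplex is determined by its action on the vertices. One then argues the action is transitive: the translates $\tau_{-\omega_k} f_{\omega_k}$ send $\omega_k$ to $0$, because $f_{\omega_k}$ maps $\alc$ to $\alc + \omega_k$ and hence sends some vertex to $\omega_k$. A faithful transitive action of a group of order $n$ on $n$ points is regular, and the barycentric argument of Step 2 then applies unchanged.
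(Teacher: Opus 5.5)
Your route is the paper's. You take the generator $g = c\,\tau_{-\omega_{n-1}}$ of $\extW_{\alc}$, show it cycles the vertices, and conclude by affine independence; your barycentric coordinates do the same job as the paper's appeal to linear independence of $\omega_1,\ldots,\omega_{n-1}$. Your explicit evaluation of the centroid is correct and fills in something the paper leaves implicit.

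The gap is that the one substantive step is only asserted (``one checks''): that $g$ acts on the vertices as an $n$-cycle. Your spot check of $g(\omega_0)$ shows the coordinate bookkeeping really is a trap here. With the displayed coordinates, $-\omega_{n-1}$ has one entry $-\tfrac{n-1}{n}$ and the rest $\tfrac{1}{n}$, the opposite sign from what you wrote. The reason is that the displayed vector labelled $\omega_k$ pairs to $1$ with $\alpha_{n-k}$, not $\alpha_k$. What you wrote is right only for the true dual-basis $\omega_{n-1}$. The paper avoids coordinates entirely. With $c(\epsilon_i) = \epsilon_{i+1}$ one has $c^{-1}\alpha_j = \alpha_{j-1}$, where $\alpha_0 = \epsilon_n - \epsilon_1$. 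Hence
\[ \alpha_j(g(\omega_i)) = \alpha_{j-1}(\omega_i - \omega_{n-1}) = \delta_{i+1,j}, \qquad 1 \le j \le n-1. \]
This uses only $\alpha_j(\omega_i) = \delta_{ij}$, $\alpha_0(\omega_i) = -a_i = -1$ for $i \geq 1$, and $\alpha_0(\omega_0) = 0$. Since $\alpha_1,\ldots,\alpha_{n-1}$ form a basis of $\a^*$, this gives $g(\omega_i) = \omega_{i+1}$, with $g(\omega_{n-1}) = \omega_0$. That one line is exactly what your Step 2 needs.

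Your fallback argument also has a hole. That $f_{\omega_k}$ maps $\overline{\alc}$ onto $\overline{\alc} + \omega_k$ shows only that $\tau_{-\omega_k} f_{\omega_k}$ sends \emph{some} vertex to $0$, not that it sends $\omega_k$ there. So transitivity does not follow as written. It is easily repaired:
\begin{enumerate}[(i)]
\item Every element of $\extW$ has linear part in $W$, so an element of $\extW_{\alc}$ fixing $0$ is some $w \in W \subseteq \affW$ with $w(\alc) = \alc$.
\item Since $\affW$ acts simply transitively on alcoves, $w = e$, so the stabilizer of $\omega_0$ in $\extW_{\alc}$ is trivial.
\item Since $|\extW_{\alc}| = n$, orbit--stabilizer gives that the orbit of $\omega_0$ is all $n$ vertices.
\end{enumerate}
Transitivity alone already suffices for your barycentric argument; regularity is not needed.
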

\begin{proof}
    By Theorem~\ref{thm:extW-stab}, $\extW_\alc$ is generated by $f = c\tau_{-\delta_c} = c\tau_{-\omega_{n-1}}$. To calculate the action of $f$ on the fundamental alcove $\alc$, it suffices to compute its action on the vertices $\omega_i$:
\begin{align*}
    \alpha_j(f(\omega_i)) &= \alpha_j(c(\omega_i - \omega_{n-1})) = (c^{-1}\alpha_j)(\omega_i - \omega_{n-1}) = \alpha_{j-1}(\omega_i - \omega_{n-1})\\
    &= \delta_{i+1,j} \quad \text{(for $1 \leq j < n$)}.
\end{align*}
Since the $\alpha_j$ are a dual basis to the $\omega_j$ by definition, this shows that $f(\omega_i) = \omega_{i+1}$. As $\omega_1, \ldots, \omega_{n-1}$ are linearly independent, the only fixed point of $\extW_{\alc}$ acting on $\alc$ is the centroid $\tfrac{1}{n}(\omega_0 + \cdots + \omega_{n-1})$.
\end{proof}

Combining this lemma with Theorem~\ref{thm:main-1} gets us one direction of Theorem~\ref{thm:main-2} in the type AI case:
\begin{corollary} \label{cor:AI-necc}
    If $U,V \in \SU(n)$ have $\SO(n) \cdot U \cdot \SO(n) \cdot V \cdot \SO(n) = \SU(n)$, then $UU^T$ and $VV^T$ both have spectrum $e^{\pi i (n-2j+1)/n}$ for $j = 1, \ldots, n$. Equivalently, $UU^T$ and $VV^T$ have characteristic polynomial $x^n + (-1)^{n}$.
\end{corollary}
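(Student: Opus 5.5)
We combine Theorem~\ref{thm:main-1} with Lemma~\ref{lem:AI-fixed-point}, then translate the conclusion into a statement about Cartan doubles. Since $\SU(n)$ is simply connected, the hypothesis $\SO(n) U \SO(n) V \SO(n) = \SU(n)$ gives two things by Theorem~\ref{thm:main-1}. First, $\K V \K = \K U^{-1}\K$. Second, writing $\K U \K = \K\exp(\pi i X)\K$ with $X = a(U) \in \overline{\alc}$, the point $X$ is fixed by $\extW_{\alc}$. Lemma~\ref{lem:AI-fixed-point} then forces $X$ to be the centroid
\begin{equation*}
X = \bigl(\tfrac{n-1}{2n}, \tfrac{n-3}{2n}, \ldots, -\tfrac{n-1}{2n}\bigr),
\end{equation*}
whose $j$-th coordinate is $\tfrac{n-2j+1}{2n}$.

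Next we compute the Cartan double. Write $U = k_1 \exp(\pi i X) k_2$ with $k_1,k_2\in\SO(n)$, where $\exp(\pi i X)$ means $\diag(e^{\pi i x_1},\ldots,e^{\pi i x_n})$ under the identification of Example~\ref{ex:AI-alcove}. The computation in the proof of Theorem~\ref{thm:cartan-double} gives $UU^T = U\theta(U)^{-1} = k_1 \exp(2\pi i X) k_1^{-1}$. Its spectrum is therefore $e^{2\pi i x_j} = e^{\pi i (n-2j+1)/n}$ for $j=1,\ldots,n$, as claimed for $U$.

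For $V$, we have $\K V\K = \K U^{-1}\K$, so by Theorem~\ref{thm:cartan-double} the matrix $VV^T$ is $\K$-conjugate to the Cartan double of $U^{-1}$. That double is $U^{-1}\overline{U} = U^{-1}(UU^T)^{-1}U$, which is conjugate to $(UU^T)^{-1}$. Its spectrum is $\{e^{-\pi i(n-2j+1)/n}\}$. Replacing $j$ by $n+1-j$ shows this set equals $\{e^{\pi i(n-2j+1)/n}\}$, so it is closed under inversion and $VV^T$ has the same spectrum as $UU^T$. Alternatively, one can apply the symmetric roles of $U$ and $V$: from $\G = \G^{-1} = \K V^{-1}\K U^{-1}\K$, the hypothesis holds with the pair $(V^{-1},U^{-1})$. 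Either route works.

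Finally we identify the characteristic polynomial. The numbers $e^{\pi i (n-2j+1)/n}$ are $n$ distinct values of $\zeta$ with $\zeta^n = e^{\pi i(n-2j+1)} = (-1)^{n+1}$. They are therefore exactly the roots of $x^n - (-1)^{n+1} = x^n + (-1)^n$, which gives the stated characteristic polynomial. There is no real obstacle here. The only points needing care are the factor of $2$ between $X$ and the eigenvalues of the Cartan double, and the normalization of $\exp(\pi i X)$ as a diagonal matrix.
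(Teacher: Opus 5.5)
Your proposal is correct and follows the paper's route: the corollary comes from combining Theorem~\ref{thm:main-1} with Lemma~\ref{lem:AI-fixed-point}, then reading off the spectrum of the Cartan double $UU^T = k_1\exp(2\pi i X)k_1^{-1}$. Your treatment of $V$ (via $\K V\K = \K U^{-1}\K$ and the fact that the spectrum is closed under inversion) and your identification of the characteristic polynomial supply details the paper leaves implicit.
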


\subsection{The polytope $\bigP(\SU(n), \SO(n))$} \label{subsec:P-AI}
To prove the converse of Corollary~\ref{cor:AI-necc}, we apply Lemma~\ref{lem:B-fat-points}, for which we need some knowledge of $\bigP(\SU(n), \SO(n))$. We start with an explicit description of the polytope $\bigP(\SU(n))$. Given a vector $\mathbf{x} \in \RR^n$ and $I \subseteq [n]$, write $\mathbf{x}_I$ for $\sum_{i \in I} x_i$.

\begin{theorem}[\cite{agnihotri-woodward}] \label{thm:qLR} The polytope $\bigP(\SU(n))$ is the set of $(X,Y,Z) \in \overline{\alc}(\SU(n))^3$ obeying every inequality $-X_K + Y_I + Z_J \leq d$ for which the quantum Littlewood-Richardson coefficient $c_{IJ}^{K,d}$ is nonzero, where $I,J,K \subseteq [n]$ are subsets of equal size and $d \geq 0$ is an integer. \end{theorem}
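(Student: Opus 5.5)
The plan is to follow the strategy of Agnihotri and Woodward: translate the multiplicative eigenvalue problem into a stability problem for holomorphic bundles on $\mathbb{P}^1$ with three marked points, and then read off the facets from enumerative geometry of Grassmannians. First, a triple $(X,Y,Z) \in \overline{\alc}(\SU(n))^3$ lies in $\bigP(\SU(n))$ exactly when there are $x_0,x_1,x_2$ with $x_0 = x_1x_2$ and $x_j \sim \exp(2\pi i X_j)$. Rewriting this as $x_1 x_2 x_0^{-1} = e$ gives a representation of $\pi_1(\mathbb{P}^1 \setminus \{p_0,p_1,p_2\})$ into $\SU(n)$ with prescribed local holonomy conjugacy classes. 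The sign change $-X_K$ in the inequalities comes from this inversion of $x_0$. By the Mehta--Seshadri theorem, such representations correspond to polystable parabolic bundles of degree $0$ on $\mathbb{P}^1$. The bundles carry weights determined by $Z$ and $Y$ at $p_1,p_2$, and by the weights of $\exp(-2\pi i X)$ at $p_0$. So $\bigP(\SU(n))$ is the set of weight triples for which a semistable parabolic bundle exists. It suffices to treat the trivial underlying bundle, since the open stratum is generic.

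For \emph{necessity}, suppose $c_{IJ}^{K,d} \neq 0$. The flags at the three points are eigenvector flags of $x_1,x_2,x_0$. By the interpretation of quantum Littlewood--Richardson coefficients as three-point genus-zero Gromov--Witten invariants of $\mathrm{Gr}(k,n)$, there is a degree-$d$ rational curve meeting general translates of the Schubert varieties indexed by $I$, $J$ and $K$; by Kleiman transversality this holds for the given flags as well. Such a curve is exactly a rank-$k$ subbundle $F$ of the trivial bundle of degree $-d$, in Schubert position $I,J,K$ relative to the flags. Its parabolic degree is $-d + Y_I + Z_J - X_K$, and semistability forces this to be $\le 0$. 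That yields $-X_K + Y_I + Z_J \le d$.

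For \emph{sufficiency}, I would first show that $\bigP(\SU(n))$ is a convex polytope, using the quasi-Hamiltonian / group-valued moment map convexity theorem of Meinrenken--Woodward, or alternatively the GIT description of the parabolic moduli space. Then I would show that every facet not coming from the alcove walls is of the form $-X_K + Y_I + Z_J = d$ with $c_{IJ}^{K,d}\neq 0$. To do this, take $(X,Y,Z)$ just outside $\bigP$ but inside the alcoves. No semistable bundle exists there, so the generic parabolic bundle has a maximal destabilizing subbundle. Its rank $k$, degree $-d$ and Schubert positions $I,J,K$ are constant on the generic locus. A dimension count, with the subbundle unique and the relevant Quot-scheme expected-dimension zero, shows the corresponding Gromov--Witten invariant is nonzero. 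Hence the violated inequality is one of those listed. Convexity then gives equality of the two polytopes.

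I expect the main obstacle to be this last step. The difficulty is to show that the generic destabilizing subbundle produces a \emph{nonzero} (enumeratively meaningful) quantum Littlewood--Richardson coefficient, and not merely a nonempty intersection of larger dimension. Here I would use the minimality and uniqueness of the Harder--Narasimhan filtration, together with deformation theory of the subbundle, to force the moduli of such subbundles to be finite and reduced.
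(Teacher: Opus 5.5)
The paper gives no proof of this statement; it quotes it from Agnihotri--Woodward. Your outline (Mehta--Seshadri, reduction to the trivial underlying bundle, subbundles of $\mathcal{O}^n$ as rational curves in $\mathrm{Gr}(k,n)$, inequalities from destabilizing subbundles) is essentially their argument.

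\textbf{The necessity step fails as written.} Kleiman's theorem is about \emph{general} translates of Schubert varieties. For special flags the degree-$d$ curve can genuinely fail to exist. Take $n=2$, $k=1$, $d=1$ with all three classes the point class: then $\langle \mathrm{pt},\mathrm{pt},\mathrm{pt}\rangle_1=1$. Yet if two of the three lines coincide, there is no subbundle $\mathcal{O}(-1)\subset\mathcal{O}^2$ through all three, because a degree-one map $\mathbb{P}^1\to\mathbb{P}^1$ is injective. This configuration does arise from representations: split parabolic bundles attached to diagonal representations give it.

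The repair uses a tool you already invoke. The stack of degree-$0$ parabolic bundles with fixed flag types is irreducible: it is a flag-variety bundle over $\mathrm{Bun}^0_n(\mathbb{P}^1)$, whose generic point is $\mathcal{O}^n$. Semistability is open. So if $(X,Y,Z)\in\bigP(\SU(n))$, the \emph{generic} parabolic bundle (trivial underlying bundle, generic flags) is semistable. For generic flags the nonzero invariant is enumerative and does give an honest degree-$d$ map, hence the inequality.

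Two bookkeeping points also belong in the setup:
\begin{enumerate}
\item $c_{IJ}^{K,d}$ is a structure constant, equal to $\langle\sigma_I,\sigma_J,\sigma_{K^\vee}\rangle_d$ with $K^\vee=\{n+1-k : k\in K\}$. This, together with $\widetilde{X}=(-x_n,\ldots,-x_1)$, is what turns $\widetilde{X}_{K^\vee}$ into $-X_K$. Inverting $x_0$ alone does not account for it.
\item Alcove coordinates can be negative, and on the wall $x_1-x_n=1$ they fill a closed interval of length one. So Mehta--Seshadri must be applied after elementary modifications, or via a density and closure argument.
\end{enumerate}

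\textbf{The step you flag as the main obstacle closes without deformation theory.} Subbundles with the same rank, degree and exact Schubert cells have the same parabolic degree. Every subbundle of maximal slope lies in the maximal destabilizing one. Hence, for generic flags, there is \emph{exactly one} subbundle of the generic Harder--Narasimhan type.

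Now apply Kleiman to $\mathrm{ev}\colon \mathrm{Mor}_d(\mathbb{P}^1,\mathrm{Gr}(k,n))\to\mathrm{Gr}(k,n)^3$ and the open Schubert cells. The fiber over generic flags is smooth of the expected dimension. Since it is a single point, the expected dimension is $0$ and the point is reduced; you deduce this rather than assume it. Smaller cells and boundary strata of the compactification miss generic translates for dimension reasons, so the invariant is exactly $1$.

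This argument works at \emph{every} point of $\overline{\alc}(\SU(n))^3\setminus\bigP(\SU(n))$, not only near a facet. It directly produces a violated listed inequality, so the Meinrenken--Woodward convexity step is unnecessary.
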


\begin{lemma} \label{lem:AI-suff}
Let $\zeta$ be the centroid of $\alc(\su(n),\so(n))$, so $\zeta_j = \tfrac{n-2j+1}{2n}$ for $j = 1, \ldots, n$. Then $(X,\zeta,\zeta) \in \bigP(\SU(n))$ for any $X \in \overline{\alc(\su(n),\so(n))}$.
\end{lemma}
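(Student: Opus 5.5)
The plan is to verify the Agnihotri--Woodward inequalities of Theorem~\ref{thm:qLR} directly for the triple $(X,\zeta,\zeta)$. Proposition~\ref{prop:grading} will turn each inequality into a statement that involves only $X$ and $K$, and that statement can then be checked at the vertices of the alcove. Note first that $\overline{\alc(\su(n),\so(n))}$ and $\overline{\alc(\SU(n))}$ are the same set $\{x_1\ge\cdots\ge x_n\ge x_1-1,\ \sum x_j=0\}$. So $X$ and $\zeta$ are legitimate coordinates for $\bigP(\SU(n))$, and it suffices to show that $-X_K+\zeta_I+\zeta_J\le d$ whenever $c_{IJ}^{K,d}\neq 0$.

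\textbf{Step 1: eliminate $I,J,d$.} For a $k$-subset $I$ we have
\begin{equation*}
\zeta_I=\sum_{i\in I}\frac{n+1-2i}{2n}=\frac{k(n+1)}{2n}-\frac{\sum I}{n}.
\end{equation*}
If $c_{IJ}^{K,d}\ne0$, Proposition~\ref{prop:grading} gives $\sum I+\sum J=\sum K+k(n-k)+\binom{k+1}{2}-nd$. Substituting, the $kn$ terms cancel, and we obtain
\begin{equation*}
\zeta_I+\zeta_J=d+\frac{1}{n}\Bigl(\binom{k+1}{2}-\sum K\Bigr).
\end{equation*}
The required inequality is therefore equivalent to
\begin{equation*}
X_K\;\ge\;\frac{1}{n}\Bigl(\binom{k+1}{2}-\sum K\Bigr)
\end{equation*}
for every $k$-subset $K$ and every $X$ in the closed alcove. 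This condition no longer involves $I$, $J$ or $d$.

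\textbf{Step 2: reduce to vertices.} The left side of the reduced inequality is linear in $X$, and the closed alcove is the simplex with vertices $\omega_0,\dots,\omega_{n-1}$. So it is enough to check the inequality at $X=\omega_m$. Recall that $\omega_m$ has $n-m$ entries equal to $m/n$ followed by $m$ entries equal to $(m-n)/n$. Let $t=|K\cap\{n-m+1,\dots,n\}|$. Then $nX_K=km-nt$, and the inequality to prove becomes
\begin{equation*}
\sum K\;\ge\;\binom{k+1}{2}-km+nt.
\end{equation*}

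\textbf{Step 3: the combinatorial bound.} The $t$ elements of $K$ lying in the last $m$ positions sum to at least $t(n-m)+\binom{t+1}{2}$. The remaining $k-t$ elements sum to at least $\binom{k-t+1}{2}$. Subtracting the required right-hand side from this lower bound and simplifying leaves
\begin{equation*}
t(n-m)+\binom{t+1}{2}+\binom{k-t+1}{2}-\binom{k+1}{2}+km-nt=(k-t)(m-t).
\end{equation*}
This is nonnegative because $t\le k$ and $t\le m$, so the inequality holds at every vertex.

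The main obstacle is Step 1: one must see that the grading identity makes $\zeta_I+\zeta_J-d$ depend only on $K$. This is what makes the centroid special. Once that is in place, Steps 2 and 3 are an elementary convexity argument followed by a counting estimate.
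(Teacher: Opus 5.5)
Your proof is correct and follows essentially the same route as the paper: you verify the Agnihotri--Woodward inequalities, use the grading identity to eliminate $I,J,d$, check the result at the vertices of the alcove, and finish with the same counting bound. Your quantity $(k-t)(m-t)$ is exactly the paper's $a(n-k-p+a)$ under $p=n-m$, $a=k-t$. The only difference is the order of steps: you apply the grading identity before reducing to vertices, which makes it explicit that the condition depends only on $X$ and $K$.
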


\begin{proof}
By Theorem~\ref{thm:qLR}, we must check the inequality 
\begin{equation} \label{eq:zeta-ineq-1}
    -X_K + \zeta_I + \zeta_J \leq d
\end{equation}
whenever $c_{IJ}^{K,d} > 0$ and $X \in \overline{\alc}$. It suffices to check this when $X$ is a vertex
\begin{equation*}
    \omega_{n-p} = (\overbrace{\tfrac{n-p}{n}, \ldots, \tfrac{n-p}{n}}^{p}, \overbrace{\tfrac{-p}{n}, \ldots, \tfrac{-p}{n}}^{n-p}).
\end{equation*}
of $\overline{\alc}$.
In this case, \eqref{eq:zeta-ineq-1} reads 
\begin{equation} \label{eq:zeta-ineq-2}
   -\frac{1}{n}((n-p)|K \cap [p]| - p|K \cap [p+1,n]|) + \sum_{i \in I} \frac{n-2i+1}{2n} + \sum_{j \in J} \frac{n-2j+1}{2n} \leq d
\end{equation}

Setting $k = |I| = |J| = |K|$ and $a = |K \cap [p]|$ and rearranging, \eqref{eq:zeta-ineq-2} becomes
\begin{equation} \label{eq:zeta-ineq-3}
    pk - na + k(n+1) \leq nd + \sum I + \sum J.
\end{equation}
If $c_{IJ}^{K,d} > 0$, then
\begin{equation} \label{eq:grading}
nd + \sum I + \sum J = \sum K + \sum_{i=n-k+1}^{n} i
\end{equation} by Proposition~\ref{prop:grading}. We now prove that \eqref{eq:grading} implies \eqref{eq:zeta-ineq-3}.

We must show that 
\begin{equation*}
nd + \sum I + \sum J - pk + na - k(n+1) \geq 0.
\end{equation*}
Using \eqref{eq:grading}, the left side here is 
\begin{align*}
na - pk + \sum K - \sum_{i=1}^k i.
\end{align*}
Write $K = \{K_1 < \cdots < K_k\}$. Then $K_i \geq i$ for $i = 1, \ldots, a = |K \cap [p]|$ and $K_i \geq p+i-a$ for $i = a+1, \ldots, k$. These inequalities give 
\begin{equation*}
na - pk + \sum_{i=1}^k (K_i-i) \geq na - pk + \sum_{i=a+1}^k (p-a) = a(n-k-p+a).
\end{equation*}
But $n-k-p+a \geq 0$ because it is the cardinality of the set $([n] \setminus K) \setminus [p]$.
\end{proof}

\begin{theorem*}[Theorem~\ref{thm:main-2}, type AI case]
$\B(\SU(n),\SO(n))$ is the singleton $\{\zeta\}$ where $\zeta_j = \tfrac{n-2j+1}{2n}$. Equivalently, $\SO(n) \cdot U \cdot \SO(n) \cdot V \cdot \SO(n) = \SU(n)$ if and only if $UU^T$ and $VV^T$ both have spectrum $e^{\pi i (n-2j+1)/n}$ for $j = 1, \ldots, n$, i.e.\ characteristic polynomial $x^n + (-1)^{n}$. 
\end{theorem*}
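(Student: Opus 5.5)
The proof has two halves. Necessity is already available: by Corollary~\ref{cor:AI-necc} (Theorem~\ref{thm:main-1} combined with Lemma~\ref{lem:AI-fixed-point}), every element of $\B(\SU(n),\SO(n))$ equals $\zeta$. It remains to show $\zeta \in \B(\SU(n),\SO(n))$. I will use the fat-point characterization of Lemma~\ref{lem:B-fat-points}. Since $\SU(n)$ is simply connected, it suffices to show that $(Y,\widetilde{\zeta},\zeta) \in \bigP(\SU(n),\SO(n))$ for every $Y \in \overline{\alc}$.

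First I compute $\widetilde{\zeta}$. The vector $-\zeta$ has entries $-\tfrac{n-2j+1}{2n}$, and these are just the entries of $\zeta$ in reverse order. So $-\zeta$ is a permutation of $\zeta$ and is Weyl-conjugate to it. Hence $\exp(-2\pi i\zeta)\sim\exp(2\pi i\zeta)$, and $\widetilde{\zeta}=\zeta$. Note also that for $\SU(n)$ the alcove $\overline{\alc(\SU(n))}$ and $\overline{\alc(\su(n),\so(n))}$ are described by the same coordinates $x_1\ge\cdots\ge x_n\ge x_1-1$; only the exponent carries the factor of $2$. So Lemma~\ref{lem:AI-suff} gives $(Y,\zeta,\zeta)\in\bigP(\SU(n))$ for all $Y$ in the closed alcove.

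The main step is passing from $\bigP(\SU(n))$ to $\bigP(\SU(n),\SO(n))$. We need $x_1,x_2\in\P$, i.e.\ unitary symmetric matrices, with the prescribed conjugacy classes and product in the class of $\exp(2\pi i Y)$. Here I invoke the theorem of Falbel and Wentworth cited just before this section: for $\G=\SU(n)$, $\K=\SO(n)$ one has $\bigP(\SU(n),\SO(n))=\bigP(\SU(n))\cap\overline{\alc}^3$. This is the "Lagrangian" version of the multiplicative eigenvalue problem, where any achievable triple of spectra is achievable by products of symmetric unitaries. Granting it, $(Y,\zeta,\zeta)\in\bigP(\SU(n),\SO(n))$ for all $Y$. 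Then Lemma~\ref{lem:B-fat-points} gives $\zeta\in\B$, so $\B=\{\zeta\}$.

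For the matrix reformulation, I combine the Proposition characterizing $\K x\K y\K=\G$ via $a(x)=a(y^{-1})\in\B$ with the Cartan double from Example~\ref{ex:AI-KAK}. Since $UU^T=O_1D^2O_1^T$, the condition $a(U)=\zeta$ says that $UU^T$ has spectrum $e^{2\pi i\zeta_j}=e^{\pi i(n-2j+1)/n}$. The condition $a(V^{-1})=\zeta$ gives the same spectrum for $V^{-1}(V^{-1})^T=\overline{VV^T}$. Its spectrum is the conjugate of that of $VV^T$, and this set is closed under conjugation, so $VV^T$ has the stated spectrum too. Conversely, these spectra determine the double cosets by Lemma~\ref{lem:KAK-conj}. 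Finally, the numbers $e^{\pi i(n-2j+1)/n}$ are exactly the $n$ roots of $x^n=-(-1)^n\cdot$, i.e.\ of $x^n+(-1)^n$, since their $n$-th powers are $e^{\pi i(n+1)}=(-1)^{n+1}$. The main obstacle is the dependence on Falbel--Wentworth; everything else is bookkeeping of normalizations between the two alcoves.
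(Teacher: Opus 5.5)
Your proposal is correct and follows the paper's proof essentially step for step: necessity comes from Corollary~\ref{cor:AI-necc}, and sufficiency from Lemma~\ref{lem:AI-suff} combined with the Falbel--Wentworth equality $\bigP(\SU(n))=\bigP(\SU(n),\SO(n))$ and Lemma~\ref{lem:B-fat-points}, using $\widetilde{\zeta}=\zeta$. You justify $\widetilde{\zeta}=\zeta$ more carefully than the paper does, by noting that $-\zeta$ is a permutation of $\zeta$. One harmless slip: $V^{-1}(V^{-1})^T=\overline{V^TV}$ rather than $\overline{VV^T}$, but these two matrices are similar, so your spectral conclusion stands.
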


\begin{proof}
The statement in terms of eigenvalues is equivalent to $\B(\SU(n),\SO(n)) = \{\zeta\}$ by Lemma~\ref{lem:KAK-conj}. Corollary~\ref{cor:AI-necc} shows that $\B(\SU(n),\SO(n)) \subseteq \{\zeta\}$. Lemma~\ref{lem:AI-suff} says $(\zeta,\zeta) \in \bigP(\SU(n))//\pi_1$, and a nontrivial result of Falbel and Wentworth asserts that $\bigP(\SU(n)) = \bigP(\SU(n), \SO(n))$ \cite{falbel-wentworth}. Since $\exp(i\pi \zeta)$ is self-inverse, $\widetilde{\zeta} = \zeta$, so $(\widetilde{\zeta},\zeta) \in \bigP(\SU(n), \SO(n))//\pi_1$. By Lemma~\ref{lem:B-fat-points}, this is equivalent to $\zeta \in \B(\SU(n),\SO(n))$.
\end{proof}

\section{Type AII: $\G = \SU(2n), \K = \Sp(n)$}
\label{sec:SU-Sp}

The compact symplectic group $\Sp(n)$ is the fixed-point subgroup of the involution
\begin{equation*}
    \theta\left(\begin{bmatrix} A & B \\ C & D \end{bmatrix} \right) = \begin{bmatrix} 0 & I \\ -I & 0 \end{bmatrix} \begin{bmatrix} A & B \\ C & D \end{bmatrix}\begin{bmatrix} 0 & I \\ -I & 0 \end{bmatrix}^{-1} =  \begin{bmatrix} \overline{D} & -\overline{C} \\ -\overline{B} & \overline{A} \end{bmatrix},
\end{equation*}
on $\SU(2n)$. Explicitly, it is the set of unitary matrices of the form $\left[\begin{smallmatrix} X & -\overline{Y} \\ Y & \overline{X} \end{smallmatrix}\right]$. Now
\begin{itemize}
    \item $\mathfrak{k} = \mathfrak{sp}(n)$, the space of matrices $\left[ \begin{smallmatrix} A & -\overline{B} \\ B & \overline{A} \end{smallmatrix} \right]$ with $A$ skew-Hermitian and $B$ (complex) symmetric.
    \item $\mathfrak{p}$ is the space of matrices $\left[ \begin{smallmatrix} A & \overline{C} \\ C & -\overline{A} \end{smallmatrix} \right]$ with $A$ skew-Hermitian of trace 0 and $B$ (complex) skew-symmetric.
    \item We can take $\mathfrak{a}$ to be the diagonal elements of $\mathfrak{p}$, i.e.\ diagonal matrices with diagonal of the form $i\lambda_1, \ldots, i\lambda_{n}, i\lambda_1, \ldots, i\lambda_{n}$ with $\sum_j \lambda_j = 0$ and all $\lambda_j$ real. We can then once again take $\mathfrak{h}$ to be the diagonal matrices in $\mathfrak{su}(2n)$.
    \item The restricted roots are $\epsilon_p - \epsilon_q$ with $p-q \notin \{0, \pm n\}$.
\end{itemize}

Identify $i\diag(x_1, \ldots, x_n, x_1, \ldots, x_n)$ with $(x_1, \ldots, x_n) \in \RR^n$, so $\mathfrak{a} = \{\mathbf{x} \in \RR^{n} : \sum_j x_j = 0\}$. Then the restricted roots are just the usual type $A_{n-1}$ roots $\epsilon_p-\epsilon_q$ for $p \neq q$, and we can reduce to the arguments in \sectionsymbol\ref{sec:SU-SO} without much trouble.

\begin{theorem}[\ref{thm:main-2}, type AII case]
    $\B(\SU(2n), \Sp(n)) = \{\zeta\}$ where $\zeta_j = \tfrac{n-2j+1}{2n}$ for $j = 1, \ldots, n$. That is, $\Sp(n) \cdot U \cdot \Sp(n) \cdot V \cdot \Sp(n) = \SU(2n)$ if and only if $U\theta(U)^{-1}$ and $V\theta(V)^{-1}$ both have spectrum $e^{\pi i (n-2j+1)/n}$ for $j = 1, \ldots, n$ with each eigenvalue having multiplicity $2$. Equivalently, $U\theta(U)^{-1}$ and $V\theta(V)^{-1}$ have characteristic polynomial $(x^n + (-1)^{n})^2$.
\end{theorem}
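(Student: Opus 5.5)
The plan has two halves, mirroring the type AI argument: necessity via Theorem~\ref{thm:main-1}, and sufficiency via Lemma~\ref{lem:B-fat-points} together with a polytope computation.

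\emph{Necessity.} Under the identification of $\a$ with $\{\mathbf{x}\in\RR^n : \sum_j x_j=0\}$, the restricted roots form a type $A_{n-1}$ system $\epsilon_p-\epsilon_q$. Hence the fundamental alcove, the coweights $\omega_k$, the group $C \simeq \ZZ/n\ZZ$ generated by the long cycle, and the generator $f = c\tau_{-\omega_{n-1}}$ of $\extW_\alc$ are literally the same as in \sectionsymbol\ref{sec:SU-SO}. The computation of Lemma~\ref{lem:AI-fixed-point} therefore applies verbatim: the unique $\extW_\alc$-fixed point is the centroid $\zeta$. Since $\SU(2n)$ is simply connected, Theorem~\ref{thm:main-1} gives $\B(\SU(2n),\Sp(n))\subseteq\{\zeta\}$. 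The eigenvalue reformulation follows from Lemma~\ref{lem:KAK-conj}. Indeed, for $X\in\a$, $\exp(\pi i X)\theta(\exp(\pi i X))^{-1}=\exp(2\pi i X)$ is $\diag(e^{2\pi i x_j})$ with each entry repeated twice. For $X=\zeta$ this gives eigenvalues $e^{\pi i(n-2j+1)/n}$, each of multiplicity $2$, which are the roots of $(x^n+(-1)^n)^2$.

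\emph{Sufficiency.} By Lemma~\ref{lem:B-fat-points}, and since $\widetilde{\zeta}=\zeta$, it suffices to show that for every $Y\in\overline{\alc(\su(2n),\sp(n))}$ there exist $p_1,p_2\in\P$ with $p_1,p_2\sim\exp(2\pi i\zeta)$ and $p_1p_2\sim\exp(2\pi i Y)$. I would not try to prove an analogue of Falbel--Wentworth for $\Sp(n)$. Instead I would reduce to the type AI result by a block/tensor trick. Pick the $\theta$-compatible embedding $\iota:\U(n)\to\U(2n)$, $g\mapsto\diag(g,\overline{g})$, adjusted to land in $\SU(2n)$. It maps $\SU(n)$ into $\SU(2n)$, sends $\SO(n)$ into $\Sp(n)$, and satisfies $\theta(\iota(g))=\iota(\overline{g})$. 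Consequently it sends the AI Cartan data to the AII Cartan data, and in particular sends $\P_{AI}$ (unitary symmetric matrices) into $\P_{AII}$.

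Now $\iota$ maps $\diag(e^{\pi i x_j})$ to $\diag(e^{\pi i x_j}, e^{-\pi i x_j})$. This is not quite our $\a$, which has $x$ repeated rather than negated. So I would instead use $g\mapsto\diag(g,\Omega'\overline{g}\Omega'^{-1})$, or simply conjugate by a suitable element of $\K$ (the Weyl group action). The aim is that the AI element $\exp(\pi i X)$ maps to an element $\K$-conjugate to the AII element $\exp(\pi i X)$ under the identification of both $\a$'s with $\RR^n$. Granting this, the type AI theorem (the set $\{\zeta\}=\B(\SU(n),\SO(n))$) gives $\SO(n)\exp(\pi i\zeta)\SO(n)\exp(-\pi i\zeta)\SO(n)=\SU(n)\supseteq\A_{AI}$. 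Applying the map shows $\K\exp(\pi i\zeta)\K\exp(-\pi i\zeta)\K\supseteq\A$ in $\SU(2n)$. Left and right multiplication by $\K$ then yields all of $\K\A\K=\G$ by Theorem~\ref{thm:cartan-facts}.

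\emph{Main obstacle.} The delicate step is finding a homomorphism $\SU(n)\to\SU(2n)$ intertwining the two involutions so that the AI maximal abelian $\a$ maps onto the AII $\a$ with matching coordinates. The naive $\diag(g,\overline g)$ gives $(x,-x)$ rather than $(x,x)$. If no such embedding works, the fallback is Lemma~\ref{lem:B-fat-points} directly. In that approach I would check that Falbel--Wentworth-type realizability holds for quaternionic symmetric products: given unitary symmetric $S_1,S_2$ realizing the $\SU(n)$ triple, build $p_i=S_i\oplus\overline{S_i}$ in suitable coordinates. I expect that these coordinates, where $\mathfrak{p}_{AII}$ contains block matrices doubling the spectrum, are exactly what makes the doubling work.
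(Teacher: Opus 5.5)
\textbf{Verdict: genuine gap.} Your necessity half is the paper's argument verbatim. Your sufficiency strategy is also the paper's: push the type AI identity $\SO(n)\,D\,\SO(n)\,D^{-1}\,\SO(n)=\SU(n)$, with $D=\exp(\pi i\zeta)$, forward along a homomorphism $\SU(n)\to\SU(2n)$ that intertwines the two involutions, then use $\K\A\K=\G$. But the step you mark ``granting this'' is the entire content of the sufficiency proof, and the embedding you propose does not do it.

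\textbf{Why $\iota$ fails.} For $\iota(g)=\diag(g,\overline g)$ one gets $\theta(\iota(g))=\diag(\overline{\overline g},\overline g)=\iota(g)$, not $\iota(\overline g)$. This is the standard embedding $\U(n)\hookrightarrow\Sp(n)$. So $\iota$ sends everything into $\K$, including $\exp(\pi i\zeta)$ and all of $\P_{AI}$; that is the trivial double coset.
\begin{itemize}
\item Conjugating by $\K$ keeps you inside $\K$, so it can never produce an element of $\K\exp(\pi i\zeta)\K$, which differs from $\K$ because $\zeta\neq 0$.
\item The twisted variant $\diag(g,\Omega'\overline g\Omega'^{-1})$ cannot intertwine $g\mapsto\overline g$ with $\theta$ for $n\geq 3$. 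That would force complex conjugation to be inner on $\SU(n)$.
\item Your fallback has the same defect. $S_i\oplus\overline{S_i}=\iota(S_i)$ lies in $\K$. Since $\theta(p)=p^{-1}$ for $p\in\P$, it lies in $\P$ only if $S_i^2=I$.
\end{itemize}

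\textbf{The missing idea.} The plain diagonal map $\Delta(g)=\diag(g,g)$ works.
\begin{itemize}
\item From the block formula for $\theta$, $\theta(\diag(g,g))=\diag(\overline g,\overline g)$, i.e.\ $\theta(\Delta(g))=\Delta(\overline g)$. Hence $\Delta(\SO(n))\subseteq\Sp(n)$.
\item $\Delta(i\diag(\mathbf{x}))=i\diag(\mathbf{x},\mathbf{x})$ carries $\a_{AI}$ onto $\a_{AII}$ with matching coordinates. So $\Delta(D)$ is exactly the AII element $\exp(\pi i\zeta)$, and $\Delta(\SU(n))\supseteq\A$.
\item Therefore $\K\Delta(D)\K\Delta(D)^{-1}\K\supseteq\K\,\Delta\bigl(\SO(n)D\SO(n)D^{-1}\SO(n)\bigr)\,\K=\K\,\Delta(\SU(n))\,\K\supseteq\K\A\K=\SU(2n)$.
\end{itemize}
This is precisely the paper's proof. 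Your fat-point fallback would also go through with $p_i=\Delta(S_i)=S_i\oplus S_i$, since $d\Delta(\p_{AI})\subseteq\p_{AII}$, but the direct double-coset argument is shorter.
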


\begin{proof}
Since the restricted root system is the same as in the type AI case, Lemma~\ref{lem:AI-fixed-point} shows equally well that $\B(\SU(2n), \Sp(n)) \subseteq \{\zeta\}$. For the converse, let $D \in \SU(n)$ be diagonal with diagonal entries $\exp(\pi i \zeta_1), \ldots, \exp(\pi i \zeta_n)$. Let $\Delta(D)$ denote the block diagonal matrix with blocks $D,D$, so $\Delta(D) \in \A$. Let $\H = \Delta(\SO(n))$, a subgroup of $\K = \Sp(n)$. Now
\begin{align*}
\K \cdot \Delta(D) \cdot \K \cdot \Delta(D)^{-1} \cdot \K &=  \K \cdot \H \Delta(D)\H  \cdot \K  \cdot \H\Delta(D)^{-1}\H \cdot \K\\ 
&= \K \cdot \Delta(\SO(n)D\SO(n)D\SO(n)) \cdot \K\\
&= \K \cdot \Delta(\SU(n)) \cdot \K \qquad \text{(by the type AI case of Theorem~\ref{thm:main-2})}\\
&\supseteq \K\A\K = \SU(2n).
\end{align*}
This shows $\zeta \in \B(\SU(2n), \Sp(n))$.
\end{proof}

\section{$\G = \SU(2n)$, $\K = \operatorname{S}(\U(n) \times \U(n))$} \label{sec:SU-block}
Here $\K$ is the set of elements of $\SU(n)$ of the form $\left[\begin{smallmatrix} A & 0 \\ 0 & D \end{smallmatrix}\right]$ where $A, D$ are $n \times n$, the fixed points of the involution
\begin{equation*}
    \theta\left(\begin{bmatrix} A & B \\ C & D \end{bmatrix} \right) = \begin{bmatrix} I & 0 \\ 0 & -I \end{bmatrix}\begin{bmatrix} A & B \\ C & D \end{bmatrix}\begin{bmatrix} I & 0 \\ 0 & -I \end{bmatrix}^{-1} = \begin{bmatrix} A & -B \\ -C & D \end{bmatrix}.
\end{equation*}
on $\SU(2n)$.

Now 
\begin{itemize}
    \item $\mathfrak{k} = \mathfrak{s}(\mathfrak{u}(n) \oplus \mathfrak{u}(n))$, the space of matrices $\left[ \begin{smallmatrix} A & 0 \\ 0 & D \end{smallmatrix} \right]$ with $A,D$ skew-Hermitian and $\tr(A) + \tr(D) = 0$.
    \item $\mathfrak{p}$ is the space of matrices $\left[ \begin{smallmatrix} 0 & C \\ -C^{\dagger} & 0 \end{smallmatrix} \right]$ with $C$ any $n \times n$ complex matrix.
    \item We can take $\mathfrak{a}$ to be the matrices $\left[ \begin{smallmatrix} 0 & iD \\ iD & 0 \end{smallmatrix} \right]$ with $D$ real diagonal. Thus $\h$ can \emph{not} be the space of diagonal matrices as before. Instead, we take
    \begin{equation*}
    \h = \left\{i\left[\begin{smallmatrix} E & F \\ F & E \end{smallmatrix}\right] : \text{$E,F$ real diagonal, $\tr(E)=0$}\right\}.
    \end{equation*} 
    This is a maximal abelian subalgebra of $\mathfrak{su}(n)$ containing $\mathfrak{a}$.
    \item The roots and root spaces of $\mathfrak{g} \otimes \CC$ with respect to $\mathfrak{h} \otimes \CC$ are  
    \begin{center}
    \begin{tabular}{c|c}
        root & root space \\
        \hline 
        $\epsilon_i - \epsilon_j + \phi_i - \phi_j$ & $\CC(e_{ij}^{\scriptscriptstyle \nwarrow} + e_{ij}^{\scriptscriptstyle \searrow} + e_{ij}^{\scriptscriptstyle \nearrow} + e_{ij}^{\scriptscriptstyle \swarrow})$\\
        \hline 
        $\epsilon_i - \epsilon_j - \phi_i + \phi_j$ & $\CC(e_{ij}^{\scriptscriptstyle \nwarrow} + e_{ij}^{\scriptscriptstyle \searrow} - e_{ij}^{\scriptscriptstyle \nearrow} - e_{ij}^{\scriptscriptstyle \swarrow})$\\
        \hline 
        $\epsilon_i - \epsilon_j + \phi_i + \phi_j$ & $\CC(e_{ij}^{\scriptscriptstyle \nwarrow} -  e_{ij}^{\scriptscriptstyle \searrow} - e_{ij}^{\scriptscriptstyle \nearrow} + e_{ij}^{\scriptscriptstyle \swarrow})$ \\
        \hline 
        $\epsilon_i - \epsilon_j - \phi_i - \phi_j$ & $\CC(e_{ij}^{\scriptscriptstyle \nwarrow} - e_{ij}^{\scriptscriptstyle \searrow} + e_{ij}^{\scriptscriptstyle \nearrow} - e_{ij}^{\scriptscriptstyle \swarrow})$
    \end{tabular}
\end{center}
where 
\begin{itemize}
\item $\epsilon_i$ and $\phi_i$ are the linear functionals on $\mathfrak{h} \otimes \CC$ sending $\left[\begin{smallmatrix} E & F \\ F & E \end{smallmatrix}\right]$ to $E_{ii}$ and $F_{ii}$ respectively;
\item $e_{ij}$ is the $n \times n$ matrix with a $1$ in entry $(i,j)$ and $0$'s elsewhere;
\item if $M$ is $n \times n$, then $M^{\scriptscriptstyle \nearrow}$ is the $2n \times 2n$ matrix $\left[ \begin{smallmatrix} 0 & M \\ 0 & 0 \end{smallmatrix} \right]$, defining $M^{\scriptscriptstyle \nwarrow}$, $M^{\scriptscriptstyle \searrow}$, and $M^{\scriptscriptstyle \swarrow}$ analogously.
\end{itemize}
\item Identify $\left[ \begin{smallmatrix} 0 & iD \\ iD & 0 \end{smallmatrix} \right] \in \a$ with $(D_{11}, \ldots, D_{nn}) \in \RR^n$. Note that this identifies $\a$ with all of $\RR^n$, \emph{not} just the sum 0 hyperplane as in the type AI and AII cases.
\item The restricted roots $\Phi(\su(2n), \mathfrak{s}(\u(n) \oplus \u(n)))$ are $\phi_i - \phi_j$ ($i \neq j$) and $\pm (\phi_i + \phi_j)$ (any $i,j$).  Thus the restricted root system is of type $C_n$. We take $\alpha_i = \phi_i - \phi_{i+1}$ for $i = 1, \ldots, n-1$ and $\alpha_n = 2\phi_n$ to be the simple roots, so the positive roots are $\phi_i - \phi_j$ for $i < j$ and $\phi_i + \phi_j$ for all $i,j$. The highest root is $-\alpha_0 = 2\phi_1 = 2\alpha_1 + \cdots + 2\alpha_{n-1} + \alpha_n$.
\item The Stiefel diagram is $\bigcup_{1 \leq i,j \leq n} \{\mathbf{x} \in \RR^n : x_i \pm x_j \in \ZZ\}$, and we take a fundamental alcove to be $\alc = \{\mathbf{x} \in \RR^n : \tfrac{1}{2} > x_1 > \cdots > x_n > 0\}$. The fundamental coweights are $\omega_i = e_1 + \cdots + e_i$ for $i < n$ and $\omega_n = \tfrac{1}{2}(e_1 + \cdots + e_n)$, and the integers $a_0, a_1, \ldots, a_{n-1}, a_n$ are $1, 2, \ldots, 2, 1$.
\end{itemize}

The finite Weyl group $W$ is generated by the reflections $s_{\alpha_i,0}$ ($i < n$), which as in the type AI case swap coordinates $i$ and $i+1$, as well as the reflection $s_{\alpha_n,0}$ across $\{x_n = 0\}$, which negates coordinate $n$. Thus $W$ is the \emph{hyperoctahedral group} $B_n$, the group of \emph{signed permutations} $w_1 \cdots w_n$ where $|w_1| \cdots |w_n|$ is a permutation of $n$. For instance, $\bar{4}\bar{3}12 \in B_4$ where $\bar{4} = -4$. The action of $W$ on $\mathfrak{a}$ is given by $w(e_i) = \sgn(w(i)) e_{|w(i)|}$.

With this description, $w \in B_n$ has a cyclic descent at $0 < i < n$ if $s w(i) > s w(i+1)$ where $s = \sgn(w(i))\sgn(w(i+1))$, a cyclic descent at $n$ if $w(n) < 0$, and a cyclic descent at $0$ if $w(1) > 0$.
\begin{proposition} $C(\su(2n), \mathfrak{s}(\u(n) \oplus \u(n)))$ is the group of order 2 generated by $c = \bar{n} \cdots \bar{2}\bar{1}$, which acts on $\alc$ by the map $(x_1, \ldots, x_n) \mapsto (\tfrac{1}{2}-x_n, \ldots, \tfrac{1}{2}-x_1)$.   \end{proposition}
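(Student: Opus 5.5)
We determine $C = \{w \in B_n : \cdes(w) = 1\}$ directly from the cyclic-descent description just given, and then compute the action of $c\tau_{-\delta_c}$ on $\alc$ via Theorem~\ref{thm:extW-stab}.

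\textbf{Step 1: only $\mathrm{id}$ and $c$ have $\cdes = 1$.} Since the weights are $a_0 = a_n = 1$ and $a_1 = \cdots = a_{n-1} = 2$, the condition $\cdes(w) = 1$ means that $\cDes(w)$ is exactly $\{0\}$ or exactly $\{n\}$.
\begin{itemize}
\item Suppose $\cDes(w) = \{0\}$. Then $w(n) > 0$, $w(1) > 0$, and there is no descent at any $0<i<n$.
\item We check that all entries of $w$ are positive. If some $w(i) > 0$ and $w(i+1) < 0$, then $s = -1$, and the no-descent condition reads $-w(i) \le -w(i+1)$, i.e.\ $w(i) \ge w(i+1)$.
\item This does not immediately contradict anything, so we argue more carefully. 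Rewrite the no-descent condition as $w(i)\,\sgn(w(i+1)) < |w(i+1)|$, up to a sign convention.
\item We expect the upshot to be that the absolute values increase until the first sign change. Combined with $w(1) > 0$ and $w(n) > 0$, this should force all entries positive and increasing, so $w = \mathrm{id}$.
\item Symmetrically, $\cDes(w) = \{n\}$ forces $w(1) < 0$ and $w(n) < 0$. The analogous monotonicity argument then forces all entries negative with $|w(i)|$ decreasing, so $w = \bar n \cdots \bar 1 = c$.
\end{itemize}
Rather than fight the sign conventions, a cleaner route is to translate descents into sign conditions on $w(\alpha_i)$ for the actual root vectors $e_i - e_{i+1}$, $2e_n$ and $-2e_1$. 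For example, $w(e_i - e_{i+1})$ is positive exactly when the signed entries are ordered suitably in the type $C$ order $1 < 2 < \cdots < n < \bar n < \cdots < \bar 1$. In that order, $\cDes(w) = \{0\}$ or $\{n\}$ says $w$ is increasing from a small start, which gives the two possibilities directly. This bookkeeping is the main obstacle, though it is routine.

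\textbf{Step 2: $c$ is an involution.} Clearly $c^2 = \mathrm{id}$: $c$ sends $e_i \mapsto -e_{n+1-i}$, so applying it twice returns $e_i$. Hence $C = \{\mathrm{id}, c\}$ is a group of order $2$, consistent with Theorem~\ref{thm:extW-stab}.

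\textbf{Step 3: the action of $f = c\tau_{-\delta_c}$.} Since $\delta_c = \omega_n = \tfrac12(e_1 + \cdots + e_n)$, we get
\[ f(x) = c\bigl(x - \tfrac12(1,\ldots,1)\bigr). \]
Because $c(y)_j = -y_{n+1-j}$, this gives $f(x)_j = \tfrac12 - x_{n+1-j}$, that is,
\[ (x_1,\ldots,x_n) \mapsto \bigl(\tfrac12 - x_n, \ldots, \tfrac12 - x_1\bigr). \]
As a check, this map visibly preserves $\tfrac12 > x_1 > \cdots > x_n > 0$, so it does stabilize $\alc$.
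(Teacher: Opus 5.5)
Your reduction in Step~1 to $\cDes(w)\in\{\{0\},\{n\}\}$ is the paper's. However, the combinatorial core of Step~1 is never actually proved. The bullet argument ends in ``we expect'' and ``should force''. The ``cleaner route'' is only asserted, and its summary (``increasing from a small start'') is not the operative condition. In the $\{n\}$ case, $w(1)<0$ sits in the top half of your order. In the $\{0\}$ case, the start $w(1)>0$ is not what forces the conclusion: for $n=3$, $1\,\bar 3\,\bar 2$ is $\prec$-increasing from $1$ but has $\cDes=\{0,3\}$.

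Your order makes the missing step two lines long. No descents at $1,\ldots,n-1$ means $w(1)\prec\cdots\prec w(n)$ in $1\prec\cdots\prec n\prec\bar n\prec\cdots\prec\bar 1$.
\begin{itemize}
\item If $\cDes(w)=\{0\}$, then $w(n)>0$ is the $\prec$-largest entry, and anything $\prec$ a positive element is positive. So all entries are positive and increasing, i.e.\ $w=\mathrm{id}$.
\item If $\cDes(w)=\{n\}$, then $w(1)<0$ is the $\prec$-smallest entry, and anything $\succ$ a negative element is negative. So all entries are negative and $\prec$-increasing, i.e.\ $w=\bar n\cdots\bar 1=c$.
\end{itemize}

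Your first attempt can also be rescued. You correctly noticed that $w(i)>0>w(i+1)$ is not a descent, since then $w(\alpha_i)=e_{w(i)}+e_{|w(i+1)|}>0$. Instead, take the largest $i$ with $w(i)<0$. It is $<n$ because $w(n)>0$, and $w(\alpha_i)=-e_{|w(i)|}-e_{w(i+1)}<0$ is a descent. (The paper's proof phrases this as a descent ``$w(i+1)<0<w(i)$'', which by its own stated rule is not a descent. The argument works with the negative-to-positive transition instead.)

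You should also record the converse, $\cDes(\mathrm{id})=\{0\}$ and $\cDes(c)=\{n\}$. You use the latter implicitly for $\delta_c=\omega_n$ in Step~3.

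Step~3 is correct and more direct than the paper's argument. The paper computes $\alpha_j\bigl(c\tau_{-\delta_c}(a_i^{-1}\omega_i)\bigr)$ using $c(\alpha_j)=\alpha_{n-j}$ and duality. From this it deduces the vertex permutation $a_i^{-1}\omega_i\mapsto a_{n-i}^{-1}\omega_{n-i}$, and then identifies the affine map by its action on vertices. You instead use $c(e_i)=-e_{n+1-i}$ and $\omega_n=\tfrac12(1,\ldots,1)$ to write $c(x-\omega_n)$ directly in coordinates. Yours is shorter and avoids the vertex bookkeeping. The paper's version shows the generator acting as the flip $i\mapsto n-i$ of the affine Dynkin labels, parallel to its type AI computation.
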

\begin{proof}
    Since $\cdes(w) = \sum_{i \in \cDes(w)} a_i$ and $a_0, a_1, \ldots, a_{n-1}, a_n = 1, 2, \ldots, 2, 1$, we can only have $\cdes(w) = 1$ if $\cDes(w)$ is $\{0\}$ or $\{n\}$. Suppose $\cDes(w) = \{0\}$. Then $w(1), w(n) > 0$ and $w(2), \ldots, w(n-1)$ must all be positive because otherwise there would be a descent $w(i+1) < 0 < w(i)$. But then we must have $0 < w(1) < \cdots < w(n)$ since there are no descents in $\{1, \ldots, n\}$, i.e.\ $w = 12\cdots n$. If $w \in C$ has its unique cyclic descent at $n$, the same argument holds with all signs reversed, forcing $w = c = \bar{n} \cdots \bar{2} \bar{1}$.

To see that $c\tau_{-\delta_c} = c\tau_{-\omega_n}$ acts on $\overline{\alc}$ as claimed, apply it to the vertices $a_i^{-1} \omega_i$. First, since $c(e_i) = -e_{n-i+1}$ we have $c(\alpha_j) = -e_{n-j+1} + e_{n-j} = \alpha_{n-j}$ for $j = 0, \ldots, n$. Apply a simple root $\alpha_j$ ($j > 0$), recalling that they are dual to the $\omega_j$:
\begin{align*}
    \alpha_j(c\tau_{-\delta_c} (a_i^{-1}\omega_i)) &= (c^{-1}\alpha_j)(a_i^{-1} \omega_i - \omega_n) = \alpha_{n-j}(a_i^{-1}\omega_i - \omega_n)\\
    &= a_i^{-1}\delta_{j,n-i} = a_{n-i}^{-1} \delta_{j,n-i}.
\end{align*}
Note that this formula holds even for $j = n$. It follows that $c\tau_{-\delta_c}$ maps $a_i^{-1}\omega_i$ to $a_{n-i}^{-1}\omega_{n-i}$. The explicit formula $a_i^{-1}\omega_i = \tfrac{1}{2}(e_1 + \cdots + e_i)$ shows that the map $(x_1, \ldots, x_n) \mapsto (\tfrac{1}{2}-x_n, \ldots, \tfrac{1}{2}-x_1)$ acts on the vertices in the same way. Since both maps are affine linear, preserve $\overline{\alc}$, and have the same action on its vertices, they must be the same.
\end{proof}

\begin{corollary} \label{cor:SU-block-necc}
    If $U,V \in \SU(2n)$ have $\K U \K V \K = \SU(2n)$ where $\K = \operatorname{S}(\U(n) \times \U(n))$, then $U\theta(U)^{-1}$ and $V\theta(V)^{-1}$ both have the same spectrum $e^{\pm \pi i x_1}, \ldots, e^{\pm \pi i x_n}$ where $\tfrac{1}{2} \geq x_1 \geq \cdots \geq x_n \geq 0$ and $(x_1, \ldots, x_n) =  (\tfrac{1}{2}-x_n, \ldots, \tfrac{1}{2}-x_1)$.
\end{corollary}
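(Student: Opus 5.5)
The plan is to follow the same pattern as Corollary~\ref{cor:AI-necc}: combine the Proposition preceding the definition of $\B(\G,\K)$ with Theorem~\ref{thm:main-1} and the explicit description of $\extW_\alc$ just obtained, then translate the resulting condition on $a(U)$ into a statement about Cartan doubles.

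First, $\SU(2n)$ is simply connected, so the earlier proposition applies. If $\K U\K V\K = \SU(2n)$, then $a(U) = a(V^{-1})$ and $a(U) \in \B(\G,\K)$. Write $X = a(U) = (x_1,\ldots,x_n) \in \overline{\alc}$, so that $\tfrac12 \ge x_1 \ge \cdots \ge x_n \ge 0$. By Theorem~\ref{thm:main-1}, $X$ is fixed by every element of $\extW_\alc$. By Theorem~\ref{thm:extW-stab} and the preceding Proposition, $\extW_\alc$ is generated by $c\tau_{-\omega_n}$, which acts as $(x_1,\ldots,x_n)\mapsto(\tfrac12-x_n,\ldots,\tfrac12-x_1)$. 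Hence $x_i + x_{n-i+1} = \tfrac12$ for all $i$.

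Next, compute the Cartan double. Since $\K U\K = \K\exp(\pi i X)\K$, Theorem~\ref{thm:cartan-double} shows that $U\theta(U)^{-1}$ is $\K$-conjugate to $\exp(\pi i X)\theta(\exp(\pi i X))^{-1} = \exp(2\pi i X)$, using $\theta = -\mathrm{id}$ on $\a\subseteq\p$. Here $X$ is identified with the matrix $\left[\begin{smallmatrix} 0 & iD \\ iD & 0\end{smallmatrix}\right]$, where $D = \diag(x_1,\ldots,x_n)$. For each $j$ this matrix preserves the $2$-dimensional coordinate subspace spanned by $e_j, e_{n+j}$, and on it equals $i x_j\left[\begin{smallmatrix}0&1\\1&0\end{smallmatrix}\right]$, with eigenvalues $\pm i x_j$. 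So $\exp(2\pi i X)$ has eigenvalues $e^{\pm 2\pi i\cdot i x_j}$, and I must be careful with the factor convention. In the paper's convention, $\exp(\pi i X)$ means exponentiating the element of $\a$ scaled so that the Cartan double has eigenvalues $e^{\pm \pi i x_j}$ (compare type AI, where $a_j$ gives $D_{jj} = e^{\pi i a_j}$ and the double has $e^{2\pi i a_j}$). I will reconcile this normalization with the Stiefel diagram $x_i\pm x_j\in\ZZ$ and the alcove bound $x_1\le \tfrac12$. The resulting spectrum of $U\theta(U)^{-1}$ is $\{e^{\pm\pi i t_j}\}$, where $t_j$ is the appropriate multiple of $x_j$; the symmetry condition becomes the stated one after rescaling. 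This normalization bookkeeping is the step I expect to be the main (if only) obstacle.

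Finally, $V$ gives the same spectrum: $a(V^{-1}) = a(U)$, and $V^{-1}\theta(V^{-1})^{-1} = V^{-1}\theta(V)$ is conjugate to $\theta(V)V^{-1} = (V\theta(V)^{-1})^{-1}$ after conjugating by $V$. Its spectrum is therefore the inverse of that of $V\theta(V)^{-1}$. Since the spectrum $\{e^{\pm\pi i x_j}\}$ is closed under inversion, $V\theta(V)^{-1}$ has the same spectrum as $U\theta(U)^{-1}$.
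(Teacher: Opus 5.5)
Up to the spectrum, your argument is the one the paper intends. The corollary is stated as an immediate consequence of Theorem~\ref{thm:main-1} and the computation of $C$ just before it: $a(U)=a(V^{-1})\in\B(\G,\K)$, the generator $c\tau_{-\omega_n}$ forces $x_i+x_{n-i+1}=\tfrac12$, and your inversion argument transfers the conclusion to $V$.

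The gap is the step you deferred, and it cannot be closed the way you assert. Your stray factor of $i$ comes from multiplying by $i$ twice. In the paper, $X=\mathbf{x}$ is a real vector, and $\exp(\pi i X)$ means the exponential of $\pi\left[\begin{smallmatrix}0 & iD\\ iD & 0\end{smallmatrix}\right]$ with $D=\diag(x_1,\ldots,x_n)$. That is, $g=\left[\begin{smallmatrix}\cos \pi D & i\sin \pi D\\ i\sin \pi D & \cos \pi D\end{smallmatrix}\right]$ (see Proposition~\ref{prop:SVD} and the remark preceding Lemma~\ref{lem:SU2-suff}). Since $\theta(g)=g^{-1}$, the Cartan double is $g^2$. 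On $\operatorname{span}(e_j,e_{n+j})$ this is $\left[\begin{smallmatrix}\cos 2\pi x_j & i\sin 2\pi x_j\\ i\sin 2\pi x_j & \cos 2\pi x_j\end{smallmatrix}\right]$, with eigenvalues $e^{\pm 2\pi i x_j}$. This is the same factor $2$ as in type AI, as your own parenthetical suggests. So no convention makes the eigenvalues $e^{\pm\pi i x_j}$. Rescaling does not recover the stated constraints either: writing the spectrum as $e^{\pm\pi i t_j}$ forces $t_j=2x_j$, hence $1\ge t_1\ge\cdots\ge t_n\ge 0$ and $t_i+t_{n-i+1}=1$, not the conditions with $\tfrac12$.

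In fact the printed statement, like the type AIII clause of Theorem~\ref{thm:main-2}, has a factor-of-two slip, so no argument can prove it verbatim. For $n=1$ the symmetry forces $x_1=\tfrac14$. This is realized by $U=V=\tfrac{1}{\sqrt2}\left[\begin{smallmatrix}1 & i\\ i & 1\end{smallmatrix}\right]$, which satisfies the hypothesis by Lemma~\ref{lem:SU2-suff}. But $U\theta(U)^{-1}=U^2=\left[\begin{smallmatrix}0 & i\\ i & 0\end{smallmatrix}\right]$ has spectrum $\pm i=e^{\pm 2\pi i/4}$, not $e^{\pm\pi i/4}$. Likewise, $e^{\pm\pi i x}$ with $x\in[0,\tfrac12]$ never equals $-1$, yet for $n=1$ the element with $a=\tfrac12$ has Cartan double $-I$.

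Once the deferred sentence is replaced by the explicit computation above, your proof establishes the corrected statement. Namely, $U\theta(U)^{-1}$ and $V\theta(V)^{-1}$ have spectrum $e^{\pm 2\pi i x_1},\ldots,e^{\pm 2\pi i x_n}$, where $\mathbf{x}=a(U)=a(V^{-1})$ satisfies $\tfrac12\ge x_1\ge\cdots\ge x_n\ge 0$ and $x_i+x_{n-i+1}=\tfrac12$. The singular-value reformulation $\sigma_i^2+\sigma_{n-i+1}^2=1$ via Proposition~\ref{prop:SVD} is unaffected.
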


There is a different interpretation of the canonical parameters $a(U)$ which will be useful.
\begin{proposition} \label{prop:SVD}
    For $U \in \SU(2n)$, we have
    \begin{equation*}
    a(U) = \tfrac{1}{\pi}(\cos^{-1} \sigma_n(U_{11}), \ldots, \cos^{-1} \sigma_1(U_{11}))
    \end{equation*}
    where $\sigma_i(M)$ denotes the $i$\textsuperscript{th} largest singular value of $M$ and $U_{11}$ is the upper-left $n \times n$ corner of $U$.
\end{proposition}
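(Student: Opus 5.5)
We reduce the claim to the definition of $a(U)$ via an explicit $\K\A\K$ factorization. For $X = (x_1,\ldots,x_n) \in \a$, viewed as the matrix $\left[\begin{smallmatrix} 0 & iD \\ iD & 0\end{smallmatrix}\right]$ with $D = \diag(x_1,\ldots,x_n)$, a direct power-series computation gives
\begin{equation*}
\exp(\pi i X) = \begin{bmatrix} \cos(\pi D) & -\sin(\pi D) \\ -\sin(\pi D) & \cos(\pi D) \end{bmatrix},
\end{equation*}
since $\left[\begin{smallmatrix} 0 & iD \\ iD & 0\end{smallmatrix}\right]^2 = -\left[\begin{smallmatrix} D^2 & 0 \\ 0 & D^2\end{smallmatrix}\right]$. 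In particular, the upper-left $n\times n$ corner of $\exp(\pi i X)$ is $\cos(\pi D)$.

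The key invariance step is the following. Left or right multiplication by $k = \diag(A,B) \in \K$ replaces the corner $U_{11}$ by $AU_{11}$ or $U_{11}A'$ with $A, A'$ unitary. Hence the singular values of $U_{11}$ depend only on the double coset $\K U\K$. Write $U = k_1 \exp(\pi i a(U)) k_2$, which is possible by Theorem~\ref{thm:fund-alcove}. Then $\sigma(U_{11})$ is the multiset of singular values of $\cos(\pi D)$ with $D = \diag(a(U))$, namely $\{|\cos(\pi x_j)|\}$.

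Now use the alcove $\overline{\alc} = \{\tfrac12 \ge x_1 \ge \cdots \ge x_n \ge 0\}$. On $[0,\tfrac12]$ the function $\cos(\pi x)$ is nonnegative and strictly decreasing, so no absolute values are needed and the ordering reverses. The largest singular value is therefore $\cos(\pi x_n)$ and the smallest is $\cos(\pi x_1)$, giving $\sigma_i(U_{11}) = \cos(\pi x_{n-i+1})$. Since $\cos^{-1}$ maps $[0,1]$ bijectively onto $[0,\tfrac{\pi}{2}]$, this yields $x_j = \tfrac1\pi\cos^{-1}\sigma_{n-j+1}(U_{11})$, which is the stated formula.

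The only genuine check is the exponential computation and the sign conventions identifying $\a$ with $\RR^n$. Signs in the off-diagonal blocks are irrelevant, since only the diagonal block enters. Uniqueness of $a(U)$, which Theorem~\ref{thm:fund-alcove} already guarantees, is consistent with the conclusion: the formula shows that $a(U)$ is determined by the singular values of $U_{11}$.
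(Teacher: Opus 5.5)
Your proof is correct and follows the paper's own argument. The paper likewise writes $U$ in its Cartan (cosine--sine) form $U = \diag(P,Q)\exp(\cdot)\diag(R,S)$, reads off $U_{11} = P\cos(\pi D)R$ with $D = \diag(a(U))$, and uses $a(U) \in \overline{\alc}$ to order the values $\cos(\pi a_i(U))$ against the $\sigma_{n-i+1}(U_{11})$. One small slip: the off-diagonal blocks of the exponential are $i\sin(\pi D)$, not $-\sin(\pi D)$ (your displayed matrix is not unitary), but as you note only the diagonal block $\cos(\pi D)$ is used.
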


\begin{proof}
    When $D$ is real diagonal we have $\exp\left(\left[ \begin{smallmatrix} 0 & iD \\ iD & 0 \end{smallmatrix} \right]\right) = \left[ \begin{smallmatrix} \cos D & i \sin D \\ i \sin D & \cos D \end{smallmatrix}\right]$. The resulting Cartan decomposition of Theorem~\ref{thm:cartan-facts}(b) is the \emph{cosine-sine decomposition} 
    \begin{equation*}
        U = \begin{bmatrix} P & 0 \\ 0 & Q \end{bmatrix} \begin{bmatrix} \cos D & i \sin D \\ i \sin D & \cos D  \end{bmatrix} \begin{bmatrix} R & 0 \\ 0 & S \end{bmatrix}
    \end{equation*}
    where $D_{ii} = \pi a_i(U)$ and $P,Q,R,S$ are unitary. This gives $U_{11} = P\cos(D)R$, so the singular values of $U_{11}$ are the numbers $\cos(\pi a_i(U)) = \cos(D_{ii})$. More specifically, since $a(U) \in \overline{\alc}$ we have $0 \leq \cos(\pi a_1) \leq \cdots \leq \cos(\pi a_n)$, and so $\cos(\pi a_i(U)) = \sigma_{n-i+1}(U_{11})$.
\end{proof}

This gives the following restatement of Corollary~\ref{cor:SU-block-necc}.
\begin{corollary}
    If $U,V \in \SU(2n)$ have $\K U \K V\K = \SU(2n)$ where $\K = \operatorname{S}(\U(n) \times \U(n))$, then the upper left $n \times n$ corners of $U, V$ have the same singular values $\sigma_1 \geq \cdots \geq \sigma_n$ and they satisfy the equations $\sigma_i^2 + \sigma_{n-i+1}^2 = 1$.
\end{corollary}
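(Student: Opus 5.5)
This corollary follows by combining Corollary~\ref{cor:SU-block-necc} with Proposition~\ref{prop:SVD}. First, by the proposition at the start of \sectionsymbol\ref{sec:necc} (or directly, since $e \in \K U \K V \K$), we have $\K V \K = \K U^{-1} \K$. Corollary~\ref{cor:SU-block-necc} then gives that $a(U) = a(V)$ is the common point $\mathbf{x} \in \overline{\alc}$ with $\tfrac{1}{2} \geq x_1 \geq \cdots \geq x_n \geq 0$ and $x_i + x_{n-i+1} = \tfrac{1}{2}$ for all $i$. Here $a(U)=a(V)$ holds because Corollary~\ref{cor:SU-block-necc} asserts equal spectra of the Cartan doubles, which by Lemma~\ref{lem:KAK-conj} means equal double cosets.

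Next apply Proposition~\ref{prop:SVD} to both $U$ and $V$. The singular values of $U_{11}$ are the numbers $\cos(\pi x_i)$, with $\sigma_{n-i+1}(U_{11}) = \cos(\pi x_i)$, and the same holds for $V_{11}$. Hence $U_{11}$ and $V_{11}$ have the same singular values.

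Finally, from $x_{n-i+1} = \tfrac{1}{2} - x_i$ we get
\begin{equation*}
\sigma_i = \cos(\pi x_{n-i+1}) = \cos(\tfrac{\pi}{2} - \pi x_i) = \sin(\pi x_i).
\end{equation*}
Therefore
\begin{equation*}
\sigma_i^2 + \sigma_{n-i+1}^2 = \sin^2(\pi x_i) + \cos^2(\pi x_i) = 1.
\end{equation*}

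The only point needing care is index bookkeeping, namely matching the decreasing order of $\sigma$ with the ordering of coordinates in $\overline{\alc}$. There is no real obstacle.
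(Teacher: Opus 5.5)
Your proposal is correct and follows the paper's route: the paper obtains this corollary as a restatement of Corollary~\ref{cor:SU-block-necc} via Proposition~\ref{prop:SVD}, and your index bookkeeping $\sigma_i = \cos(\pi x_{n-i+1}) = \sin(\pi x_i)$ is right. One minor caution: the input you actually need is that $a(U)$ itself is the $\extW_{\alc}$-fixed point from Theorem~\ref{thm:main-1}, with $a(V) = a(U^{-1}) = a(U)$ since $(U^{-1})_{11} = U_{11}^\dagger$. Do not recover $a(U)$ from the spectrum printed in Corollary~\ref{cor:SU-block-necc}, which is off by a factor of $2$: the Cartan double of $\exp(\pi i \mathbf{x})$ has eigenvalues $e^{\pm 2\pi i x_j}$.
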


As in \sectionsymbol\ref{sec:SU-SO} and \sectionsymbol\ref{sec:SU-Sp}, these equations also turn out to be sufficient to guarantee $\K U \K V \K = \SU(2n)$, but now an inductive approach is available: the truth of the general statement will follow from the $n = 1$ and $n = 2$ cases, which are explicit calculations. For both calculations, it will be useful to recall that if $\mathbf{x} \in \overline{\alc}$ and $D$ is diagonal with diagonal $\pi \mathbf{x}$, then $g = \exp\left(\left[ \begin{smallmatrix} 0 &  i D \\  i D & 0 \end{smallmatrix} \right]\right) = \left[ \begin{smallmatrix} \cos D & i \sin D \\ i \sin D & \cos D \end{smallmatrix}\right]$ is the unique element of $\exp(\pi i \overline{\alc})$ with $a(g) = \mathbf{x}$.

\begin{lemma} \label{lem:SU2-suff}
    $\B(\SU(2),\operatorname{S}(\U(1) \times \U(1)))$ contains the point $1/4$.
\end{lemma}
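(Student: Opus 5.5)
We work directly in $\SU(2)$ with $\K = \{\diag(e^{i\phi}, e^{-i\phi})\}$. By the remark preceding the lemma, the point $1/4$ corresponds to
\begin{equation*}
g = \begin{bmatrix} \cos(\pi/4) & i\sin(\pi/4) \\ i\sin(\pi/4) & \cos(\pi/4) \end{bmatrix} = \tfrac{1}{\sqrt 2}\begin{bmatrix} 1 & i \\ i & 1 \end{bmatrix}.
\end{equation*}
We must show $\K g \K g^{-1} \K = \SU(2)$.

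The first step is to reduce the claim to a single invariant. By Proposition~\ref{prop:SVD}, a double coset $\K U \K$ in $\SU(2)$ is determined by $|U_{11}| \in [0,1]$, since the alcove is $\{0 \le x \le 1/2\}$ and $a(U) = \tfrac1\pi\cos^{-1}|U_{11}|$. Since $\K g\K g^{-1}\K$ is a union of $\K$-double cosets, it suffices to show that the set of values
\begin{equation*}
\{\, |(g k g^{-1})_{11}| : k \in \K \,\}
\end{equation*}
is all of $[0,1]$. The outer factors of $\K$ do not change $|U_{11}|$, so only the middle factor $k$ matters.

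The second step is the explicit computation. Take $k = \diag(e^{i\phi}, e^{-i\phi})$; then
\begin{equation*}
(gkg^{-1})_{11} = \tfrac12\bigl(e^{i\phi} + e^{-i\phi}\bigr) = \cos\phi .
\end{equation*}
To double-check this, note that $g^{-1} = \tfrac{1}{\sqrt2}\left[\begin{smallmatrix} 1 & -i \\ -i & 1\end{smallmatrix}\right]$, and the $(1,1)$ entry of $gkg^{-1}$ is $\tfrac12(e^{i\phi}\cdot 1 + i e^{-i\phi}\cdot(-i)) = \cos\phi$. As $\phi$ ranges over $\RR$, $|\cos\phi|$ takes every value in $[0,1]$. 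Hence every double coset meets $\K g\K g^{-1}\K$, and therefore $1/4 \in \B$.

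There is no real obstacle here. The only care needed is with the normalization of the parametrization, namely confirming that $g = \exp(\pi i \cdot \tfrac14)$ in the identification used, which the remark before the lemma supplies.
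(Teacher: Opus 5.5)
Your proof is correct and is essentially the paper's argument: an explicit $\SU(2)$ computation with middle factor $\diag(e^{i\phi},e^{-i\phi})$. The difference is cosmetic. You use Proposition~\ref{prop:SVD} to reduce to the single invariant $|U_{11}|$ and compute one entry of $gkg^{-1}$, whereas the paper multiplies out the full product $k_1Uk_2Uk_3$ and matches it to the standard parametrization of $\SU(2)$. The paper uses $U$ twice rather than $U^{-1}$, which is harmless since $\K U\K=\K U^{-1}\K$.
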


\begin{proof}
    Set $U = \frac{1}{\sqrt{2}}\left[ \begin{smallmatrix} 1 & i \\ i & 1 \end{smallmatrix} \right]$, so $a(U) = 1/4$.
    We must show that any element of $\SU(2)$ can be written 
    \begin{align*}
        &\begin{bmatrix} e^{i\alpha} & 0 \\ 0 & e^{-i\alpha} \end{bmatrix}
        U
        \begin{bmatrix} e^{i\beta} & 0 \\ 0 & e^{-i\beta} \end{bmatrix}
        U
        \begin{bmatrix} e^{i\gamma} & 0 \\ 0 & e^{-i\gamma} \end{bmatrix} \\
        =\, &\begin{bmatrix} e^{i\alpha} & 0 \\ 0 & e^{-i\alpha} \end{bmatrix} i \begin{bmatrix} \sin \beta & \cos \beta \\ \cos \beta & -\sin \beta \end{bmatrix}
        \begin{bmatrix} e^{i\gamma} & 0 \\ 0 & e^{-i\gamma} \end{bmatrix} \\
        &= i \begin{bmatrix} e^{i(\alpha+\gamma)}\sin \beta & e^{i(\alpha-\gamma)}\cos \beta \\ e^{-i(\alpha-\gamma)}\cos \beta & -e^{-i(\alpha+\gamma)}\sin \beta \end{bmatrix}.
    \end{align*}
    This is easily seen to be equivalent to the more standard form $\left[ \begin{smallmatrix} e^{i\phi} \sin \beta & -e^{-i\psi} \cos \beta \\ e^{i\psi} \cos \beta & e^{-i\phi} \sin \beta \end{smallmatrix} \right]$.
\end{proof}


\begin{lemma} \label{lem:SU4-suff}
    $\B(\SU(4),\operatorname{S}(\U(2) \times \U(2)))$ contains the line $\{(\tfrac{1}{2}-x,x) : x \in [0,\tfrac{1}{4}]\}$.
\end{lemma}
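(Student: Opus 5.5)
The plan is to turn membership in $\B$ into a concrete singular-value problem for $2\times 2$ matrices, using Proposition~\ref{prop:SVD}. Write $X=(\tfrac12-x,x)$ with $x\in[0,\tfrac14]$; this lies in $\overline{\alc}$ because $\tfrac12-x\ge x$. Put $s=\sin\pi x$ and $c=\cos\pi x$. The representative is
\begin{equation*}
g=\begin{bmatrix} C & iS\\ iS & C\end{bmatrix},\qquad C=\diag(s,c),\quad S=\diag(c,s).
\end{equation*}
Here I have used $\cos\pi(\tfrac12-x)=s$ and $\sin\pi(\tfrac12-x)=c$.

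\textbf{Step 1: reduction.} Every element of $\G$ has the form $k_1\exp(\pi i Y)k_2$ with $Y\in\overline{\alc}$, and $k_1,k_2$ can be absorbed into the outer factors of $\K g\K g^{-1}\K$. So it suffices to find, for each $Y\in\overline{\alc}$, some $k\in\K$ with $a(gkg^{-1})=Y$. Take $k=\diag(A,B)$. A direct block multiplication shows that the upper-left corner of $gkg^{-1}$ is
\begin{equation*}
M(A,B)=CAC+SBS .
\end{equation*}
By Proposition~\ref{prop:SVD}, $a(gkg^{-1})$ is determined by the singular values of $M(A,B)$, and every pair $1\ge\sigma_1\ge\sigma_2\ge0$ occurs for some $Y\in\overline{\alc}$. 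The constraint $\det A\det B=1$ can be dropped. Indeed, multiplying $k$ by a scalar $\lambda I_4$ with $\lambda^4=(\det A\det B)^{-1}$ places it in $\K$ and only multiplies $M$ by a unimodular scalar, which leaves the singular values unchanged. Hence the lemma is equivalent to the following claim: as $A,B$ range over $\U(2)$, the singular values of $CAC+SBS$ fill the whole triangle $\{1\ge\sigma_1\ge\sigma_2\ge0\}$.

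\textbf{Step 2: invariants.} Instead of the singular values themselves I would track $p=|\det M|=\sigma_1\sigma_2$ and $q=\|M\|_F^2=\sigma_1^2+\sigma_2^2$. These determine $\{\sigma_1,\sigma_2\}$, and the target triangle corresponds to the curvilinear region $R=\{(p,q): 0\le p\le 1,\ 2p\le q\le 1+p^2\}$. The lower boundary $q=2p$ corresponds to $\sigma_1=\sigma_2$, the upper boundary $q=1+p^2$ to $\sigma_1=1$, and $p=0$ to $\sigma_2=0$. I would restrict to an explicit two-parameter family, for instance
\begin{equation*}
A=\begin{bmatrix}\cos\theta & -\sin\theta\\ \sin\theta&\cos\theta\end{bmatrix},\qquad B=\diag(e^{i\varphi},e^{-i\varphi})\ \text{(possibly also a rotation)},
\end{equation*}
and compute $p(\theta,\varphi)$ and $q(\theta,\varphi)$ in closed form as trigonometric polynomials in $s$, $c$, $\theta$, $\varphi$. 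As a sanity check, at $x=\tfrac14$ we have $s=c=1/\sqrt2$, so $M=\tfrac12(A+B)$. Then $A=B$ gives $\sigma_1=\sigma_2=1$, and $A=-B$ gives $0$. At $x=0$ the corner is $M=\diag(a_{11},b_{22})$ up to the off-diagonal entries, so the family must be rich enough to handle that degenerate endpoint, possibly separately.

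\textbf{Step 3: surjectivity (the main obstacle).} I expect the hard part to be showing that $(\theta,\varphi)\mapsto(p,q)$ covers all of $R$ for every $x\in[0,\tfrac14]$. My first attempt would be a topological argument. I would identify edges of the parameter square whose images run along the three boundary arcs of $R$: $\sigma_1=\sigma_2$, obtained from choices of $A,B$ making $M$ a scalar multiple of a unitary; $\sigma_2=0$, where $\det(CAC+SBS)=0$; and $\sigma_1=1$. I would then check that the boundary of the square maps to $\partial R$ with degree one, so that continuity forces the image to contain all of $R$.

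If the two-parameter family turns out to be too small, I would enlarge it by letting $B$ range over a one-parameter family of rotations composed with phases. Alternatively, I would handle $\sigma_1=\sigma_2$ and $\sigma_1=1$ directly and connect them by an explicit path within $\K$. Finally, compactness of $\K$ and closedness of the image let me pass from interior points to the boundary where needed, for example at the degenerate endpoint $x=0$.
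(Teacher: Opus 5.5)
Your Steps 1 and 2 are the same reduction the paper makes: pass to the corner $M=CAC+SBS$ via Proposition~\ref{prop:SVD}, and drop $\det A\det B=1$ by rescaling with a scalar. They are correct. However, Step 3 is the entire content of the lemma, and you leave it as a plan. Worse, the one family you name cannot work.

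Take $A=R(\theta)$ a rotation, $B=\diag(e^{i\varphi},e^{-i\varphi})$, and put $\kappa=\sin^2(2\pi x)$. One computes
\begin{equation*}
\det M=\tfrac{\kappa}{2}+\cos\theta\,(s^4e^{-i\varphi}+c^4e^{i\varphi}),\qquad \|M\|_F^2=1+(1-\kappa)\cos^2\theta+\kappa\cos\theta\cos\varphi .
\end{equation*}
For $0<x<\tfrac14$, $\det M=0$ forces $\sin\varphi=0$ and $\cos\theta\cos\varphi=-\kappa/(2-\kappa)$. Then $\sigma_1^2=\|M\|_F^2=1-\kappa^2/(2-\kappa)^2<1$, so the vertex $(\sigma_1,\sigma_2)=(1,0)$ is never attained. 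At the endpoints the image degenerates further: at $x=0$ it is only the edge $\sigma_1=1$, and at $x=\tfrac14$ it is only the diagonal $\sigma_1=\sigma_2$.

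The last failure is structural. Whenever $\det A=\det B$, at $x=\tfrac14$ you get $M=\tfrac12(A+B)$, a unimodular scalar times a sum of two elements of $\SU(2)$. Such a sum is a quaternion, hence a real multiple of a unitary, so $\sigma_1=\sigma_2$. By uniform continuity, every such family stays close to the diagonal for $x$ near $\tfrac14$. This includes your fallback ``possibly also a rotation'' for $B$. So no boundary map of degree one can exist for these families there.

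What is missing is an actual construction, valid for every $x\in[0,\tfrac14]$. Near $x=\tfrac14$ you need $\det(AB^{-1})$ to vary. The paper's Case~1 family does this:
\begin{equation*}
K_1=e^{i\phi}\left[\begin{smallmatrix}\cos\psi & i\sin\psi\\ i\sin\psi&\cos\psi\end{smallmatrix}\right],\qquad K_2=K_1^\dagger .
\end{equation*}
With it, $\det M$ is real. The paper proves the $(P,Q)$-region is covered by eliminating to two quadratics and showing they have real roots $\ge 2$. That verification only closes for $x\in[\tfrac18,\tfrac14]$, since the sign of the key quantity $G$ is governed by $-\cos(4\pi x)$.

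For $x\le\tfrac18$ the paper switches to two real rotations and uses a critical-value argument. It shows that all critical values lie in $\partial\Sigma$ and that the image contains an interior point. This is close in spirit to the topological argument you envisage.

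The split at $\tfrac18$ is not an artifact. For two rotations, $\det M=0$ and $\sigma_1=1$ can be achieved simultaneously exactly when $\sin^2(2\pi x)\le\tfrac12$, i.e.\ $x\le\tfrac18$. So a single ``simple'' two-parameter family with a uniform degree argument is unlikely to go through. Until you exhibit families and verify the covering on all of $[0,\tfrac14]$, the proposal does not prove the lemma.
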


\begin{proof}
    Fix $x \in [0, \tfrac{1}{4}]$, and let $D$ be diagonal with diagonal entries $\pi(\tfrac{1}{2}-x), \pi x$ and $V = \left[ \begin{smallmatrix} \cos D & i \sin D \\ i \sin D & \cos D \end{smallmatrix}\right]$. We must show that $\K V \K V^{-1} \K = \SU(4)$, or equivalently that $a(VkV^{-1})$ can take any value in the fundamental alcove $\overline{\alc}$ with an appropriate choice of $k \in \K$. Writing $k = \left[ \begin{smallmatrix} K_1 & 0 \\ 0 & K_2 \end{smallmatrix} \right]$, this is equivalent by Proposition~\ref{prop:SVD} to showing that the upper-left corner of $VkV^{-1}$, namely
    \begin{equation*}
        M = \cos(D)K_1 \cos(D) - \sin(D) K_2 \sin(D),
    \end{equation*}
    can have any possible pair of singular values $1 \geq \sigma_1 \geq \sigma_2 \geq 0$ with an appropriate choice of $K_1,K_2 \in \U(n)$ with $\det(K_1 K_2) = 1$. In fact, since the whole equation can be multiplied by a phase without changing $\sigma_i$, the assumption $\det(K_1 K_2) = 1$ can be dispensed with.

    We break the proof into two cases depending on $x$.

    Case 1: ${\mathbf x \geq \tfrac{1}{8}}$. In this case it suffices to take $(K_1, K_2) \in \SO(2) \times \SO(2)$. Consider the quantities
    \begin{equation*}
        P = \tfrac{1}{2}\tr(MM^\dagger) + \det(M) \qquad \text{and} \qquad Q = \tfrac{1}{2}\tr(MM^\dagger) - \det(M).
    \end{equation*}
    One checks that the chosen forms of $K_1, K_2$ guarantee that $\det(M)$, and hence $P,Q$, are real, which also implies $\sigma_1^2 \sigma_2^2 = \det(MM^\dagger) = \det(M)^2$. Let $\Sigma$ be the region $\{(p,q) \in \RR^2 : p,q \geq 0, \quad p+q-1 \leq \tfrac{1}{4}(p-q)^2\}$:

\begin{center}
    \begin{tikzpicture}[scale=0.8]
        \begin{axis}[xlabel=$\tau$, ylabel=$\delta$, axis lines=none, width=0.6\textwidth, height=0.6\textwidth]
            \addplot[domain=0:2, style=thick, samples=500] {2+x-2*sqrt(2*x)};
            \addplot[domain=0:2, style=thick] {0};
            \draw[thick] (axis cs:0, 0) -- (axis cs:0, 2);
        \end{axis}
    \end{tikzpicture}
\end{center}
This is the union of the images of the simplex $1 \geq \sigma_1 \geq \sigma_2 \geq 0$ under the two transformations $(\sigma_1, \sigma_2) \mapsto (\tfrac{1}{2}(\sigma_1 + \alpha \sigma_2)^2, \tfrac{1}{2}(\sigma_1 - \alpha \sigma_2)^2)$ for $\alpha = \pm 1$. It suffices to show that the image of $F : (K_1, K_2) \mapsto (P,Q)$ contains $\Sigma$. Indeed, suppose $P = \tfrac{1}{2}(\sigma_1 + \alpha \sigma_2)^2$ and $Q = \tfrac{1}{2}(\sigma_1-\alpha \sigma_2)^2$. Then $\tr(MM^\dagger) = \tfrac{1}{2}(P+Q) = \sigma_1^2 + \sigma_2^2$ and $\det(M) = \tfrac{1}{2}(P-Q) = \alpha \sigma_1 \sigma_2$, which would mean $\sigma(M) = (\sigma_1, \sigma_2)$.

It is convenient for computational purposes to use the rational parameterization of the unit circle, i.e.\ to make the substitution $s \leadsto 2\tan^{-1}(s)$, replacing $\cos s + i \sin s$ by $\tfrac{1-s^2 + 2si}{1+s^2}$ for $s \in \RR \cup \{\infty\}$ and likewise for $t$:
\begin{equation*}
    K_1 = \frac{1-s^2 + 2si}{1+s^2} \frac{1}{1+t^2} \begin{bmatrix} 1-t^2 & 2ti \\ 2ti & 1-t^2 \end{bmatrix} = K_2^\dagger,
\end{equation*}
Likewise write $\cos \pi x + i \sin \pi x = \tfrac{1-u^2+2ui}{1+u^2}$. View (the restriction of) $F$ as mapping $(s,t) \mapsto (P,Q)$.

It is possible to explicitly solve the equations $P = p, Q = q$ for $s,t$. Computing in the ring $\QQ(i,u)[s,t]$, one checks that these equations generate the same ideal as 
\begin{equation*}
f(y) = Ay^2 + By + C = 0, \qquad  g(z) = pz^2 + (4q-8)z - (4p - 8q - 16) = 0
\end{equation*}
where $y = s^2 + s^{-2}$, $z = t^2+t^{-2}$, and $A,B,C$ are polynomials in $u$ and linear in $p,q$. We must check that these quadratics have real roots $y,z \geq 2$.

The extremum of $g$ occurs at $2(2-q)/p$, whose minimum value on $\Sigma$ is $2$. The value of $g$ at its extremum is $-32(1-\sigma_1^2)(1-\sigma_2^2)(\sigma_1-\sigma_2)^{-2} \leq 0$. Since $g$ has leading coefficient $p \geq 0$, it must have a root $z \geq 2$. 

As for $f$, its discriminant is $16384u^4 (u^2-1)^4 (u^2+1)^8 (1-\sigma_1^2)(1-\sigma_2^2) \geq 0$, so it has real roots. We also have $f(2) = 16p(u^2+1)^8 \geq 0$. Now consider the quantity
\begin{equation*} 
    G = (-\tfrac{B}{2A} - 2)(A) = -4(u^2+1)^4[ (u^8 - 4u^6 + 22u^4 - 4u^2 + 1)p + 8u^2 (u^2-1)^2(q-2) ].
\end{equation*}
We claim that if $G \geq 0$, then $f$ has a root $y \geq 2$. Indeed, if both factors of $G$ are negative, in particular $A$, then $f(\infty) = -\infty$, so $f$ has a root in $[2,\infty)$. If both factors are positive, then the extremum $-B/2A$ occurs right of $2$. We know $f$ has real roots, so $f(\infty) = \infty$ and $f(-B/2A)$ must have opposite signs, hence there is a root in $[2, -B/2A]$.

Now we prove $G \geq 0$ on $\Sigma$. Since $G$ is linear in $p,q$, its extrema on $\Sigma$ occur either on the curved boundary $p+q-1 = \tfrac{1}{4}(p-q)^2$ (i.e. $\sigma_1 = 1$) or else at the vertex $(p,q) = (0,0)$. We have $G(0,0) = 64u(u^2+1)^4(u^2-1)^2 \geq 0$, while
\begin{equation*}
G(\sigma_1 = 1) = 2(1-\sigma_2)(u^2+1)^4[(u^2+1)^4 \sigma_2 - (u^8 - 28u^6 + 70u^4 - 28u^2 + 1)].
\end{equation*}
The minimum of the last factor is $-(u^8 - 28u^6 + 70u^4 - 28u^2 + 1) = -\cos(4\pi x)\sec^8(\pi x/2)$, which is nonnegative so long as $x \in [1/8,1/4]$.

Case 2: $\mathbf{x \leq \tfrac{1}{8}}$. In this case, it suffices to take $K_1$ of the form $e^{is} \left[ \begin{smallmatrix} \cos t & i \sin t \\ i \sin t & \cos t \end{smallmatrix} \right]$ and $K_2 = K_1^\dagger$. Parameterizing the unit circle rationally as in the last case, let
\begin{equation*}
K_1 = \frac{1}{1+s^2} \begin{bmatrix} 1-s^2 & -2s \\ 2s & 1-s^2 \end{bmatrix}, \quad K_2 = \frac{1}{1+t^2} \begin{bmatrix} 1-t^2 & -2t \\ 2t & 1-t^2 \end{bmatrix},
\end{equation*}
and $e^{ix} = \tfrac{1-u^2+2ui}{1+u^2}$. Define $\Sigma$ and $F : (s,t) \mapsto (P,Q)$ as before---except now restrict the domain of $F$ to be $[0,\infty]^2$. We must again show that $F$ is surjective.

First, we claim $\im(F)$ contains a point $z_0$ of the interior of $\Sigma$. This is easy to see: for instance, $F(1,1) = (0,0)$, and one checks that $PQ$ is not identically zero as a rational function unless $x = 1/4$, so $F$ must map a point near $(1,1)$ to a point near $(0,0)$ but not on either axis.

Next, we claim that if $C$ is the set of critical points of $F$, then $F(C) \subseteq \partial \Sigma$.  Up to scalar factors, the Jacobian of $F$ is 
\begin{equation*}
[st(u^2-1)^2 + 4u^2][4u^2 st + (u^2-1)^2](s^2t^2-1)(s-t)(s+t).
\end{equation*}
The first two factors divide $\tfrac{1}{4}(P-Q)^2 - (P+Q-1)$, while $s^2t^2-1$ divides $Q$ and $s-t$ divides $P$. Since we have restricted the domain of $F$ to $[0,\infty]^2$, the remaining factor $s+t$ is nonzero except if $s = t = 0$, in which case $P = 0$. 

Now suppose $z \in \Sigma$. Choose a curve $\gamma : [0,1] \to \Sigma$ connecting $z_0 \in \operatorname{int}(\Sigma)$ to $z$ which lies in $\operatorname{int}(\Sigma)$ except possibly for the endpoint $z = \gamma(1)$. If $z \notin \im(F)$, then there is some maximal $0 < \tau < 1$ with $\gamma(\tau) \in \im(F)$. The point $\gamma(\tau)$ must lie in the boundary $\partial \im(F)$, since otherwise the curve $\gamma$ could be continued a little bit further in $\im(F)$. Then $\gamma(\tau)$ is a critical value of $F$ by the inverse function theorem, which implies $\gamma(\tau) \in \partial \Sigma$ by the previous paragraph. But this contradicts the choice of $\gamma$.

\end{proof}

\begin{theorem} $\B(\SU(2n),\operatorname{S}(\U(n) \times \U(n)))$ is the set of $(\tfrac{1}{2} \geq x_1 \geq \cdots \geq x_n \geq 0)$ with $\tfrac{1}{2}-x_i = x_{n-i+1}$ for all $i$. Thus, the following are equivalent. 
\begin{enumerate}[(a)]
    \item $\K U \K V \K = \SU(2n)$ where $\K = \operatorname{S}(\U(n) \times \U(n))$.
    \item Letting $\theta$ denote conjugation by $\diag(I_n, -I_n)$, both $U \theta(U)^{-1}$ and $V \theta(V)^{-1}$ have the same eigenvalues $e^{\pm \pi ix_1}, \ldots, e^{\pm \pi ix_n}$ where $\tfrac{1}{2} \geq x_1 \geq \cdots \geq x_n \geq 0$ and $x_i + x_{n-i+1} = \tfrac{1}{2}$ for all $i$.
    \item The upper-left $n \times n$ corners of $U, V$ have the same singular values $\sigma_1 \geq \cdots \geq \sigma_n$, which satisfy $\sigma_i^2 + \sigma_{n-i+1}^2 = 1$ for all $i$.
\end{enumerate}
\end{theorem}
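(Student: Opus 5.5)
The necessary direction is already done: Corollary~\ref{cor:SU-block-necc} together with the Proposition just before it says that $\B(\SU(2n),\operatorname{S}(\U(n)\times\U(n)))$ is contained in the fixed locus of $(x_1,\ldots,x_n)\mapsto(\tfrac12-x_n,\ldots,\tfrac12-x_1)$ in $\overline{\alc}$. The equivalence of (a), (b) and (c) then follows as in the earlier sections:
\begin{itemize}
\item the Proposition relating $\K x\K y\K=\G$ to $a(x)=a(y^{-1})\in\B$ reduces (a) to membership in $\B$;
\item Lemma~\ref{lem:KAK-conj} and Theorem~\ref{thm:fund-alcove} translate $a(U)$ into the spectrum of $U\theta(U)^{-1}$, giving (b);
\item Proposition~\ref{prop:SVD} translates $a(U)$ into the singular values of $U_{11}$, giving (c), since $\cos^2(\pi x)+\cos^2(\pi(\tfrac12-x))=1$.
\end{itemize}
So the real work is sufficiency: every $\mathbf x\in\overline{\alc}$ with $x_i+x_{n-i+1}=\tfrac12$ lies in $\B$.

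For sufficiency I would use block embeddings, in the spirit of the type AII proof. Pair the coordinates as $(x_i,x_{n-i+1})=(\tfrac12-y_i,y_i)$ with $y_i\in[0,\tfrac14]$ for $i\le\lfloor n/2\rfloor$. If $n$ is odd, the middle coordinate is $\tfrac14$. Up to permuting coordinates, which is an element of $W\subseteq\K$, the group $\SU(2n)$ contains a product $\H=\prod_j \H_j$, where each $\H_j$ is a copy of $\SU(4)$ acting on the coordinates $\{i,n-i+1,n+i,2n-i+1\}$. If $n$ is odd there is one extra copy of $\SU(2)$ on $\{m,n+m\}$. Each $\H_j$ meets $\K$ in its own $\operatorname{S}(\U(2)\times\U(2))$ or $\operatorname{S}(\U(1)\times\U(1))$, and the torus $\A$ decomposes compatibly. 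With $V=\exp(\pi i\mathbf x)$ this gives
\[
\K V\K V^{-1}\K\supseteq \K\prod_j\big(\K_j V_j\K_j V_j^{-1}\K_j\big)\K .
\]
By Lemma~\ref{lem:SU4-suff} and Lemma~\ref{lem:SU2-suff}, each factor $\K_jV_j\K_jV_j^{-1}\K_j$ is all of $\H_j$. Hence the right side contains $\K\big(\prod_j \A_j\big)\K\supseteq\K\A\K=\SU(2n)$ by Theorem~\ref{thm:cartan-facts}.

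The main points to check are these.
\begin{enumerate}
\item The $\K_j$ commute with one another, because they act on disjoint coordinates, so the product of the pieces can be rearranged into a single product $\K V\K V^{-1}\K$.
\item The determinant condition causes no trouble. Each $\K_j\subseteq\K$, and by the remark in Lemma~\ref{lem:SU4-suff} the phases are harmless.
\item The blocks $\A_j$ together exhaust $\A$.
\end{enumerate}
I expect the hard analytic content to be entirely in Lemma~\ref{lem:SU4-suff}, which is already proved. The remaining obstacle is the bookkeeping needed to verify that the $\SU(4)$ embedding carries $\K$, $\A$ and $V$ to the corresponding $\operatorname{S}(\U(2)\times\U(2))$ data with parameter $(\tfrac12-y_i,y_i)$.
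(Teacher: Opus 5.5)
Your proof is correct and is essentially the paper's argument: pair $x_i$ with $x_{n-i+1}$, split the problem into $2\times 2$ blocks (plus one $1\times 1$ block when $n$ is odd), and apply Lemmas~\ref{lem:SU2-suff} and~\ref{lem:SU4-suff} to each block. The only difference is presentation. You embed commuting copies of $\SU(4)$ and $\SU(2)$ and finish with $\K\A\K=\SU(2n)$, whereas the paper restricts $K_1,K_2$ to a block-diagonal subgroup of $\U(n)$ and reads off the singular values of the block-diagonal corner $M=\cos(D)K_1\cos(D)-\sin(D)K_2\sin(D)$.
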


\begin{proof}
    Parts (b) and (c) are equivalent by Proposition~\ref{prop:SVD}. Corollary~\ref{cor:SU-block-necc} shows $\B(\SU(2n),\operatorname{S}(\U(n) \times \U(n))) \subseteq \{{\bf x} \in \overline{\alc} : x_i + x_{n-i+1} = \tfrac{1}{2}\}$, so we must show the reverse containment. Take ${\bf x} \in \overline{\alc}$ satisfying $x_i+x_{n-i+1} = \tfrac{1}{2}$.  Let $V = \left[ \begin{smallmatrix} \cos D & i \sin D \\ i \sin D & \cos D \end{smallmatrix}\right]$ where $D$ is diagonal with diagonal entries $\pi x_1, \ldots, \pi x_n$. As in the proof of Lemma~\ref{lem:SU4-suff}, we must show that $M = \cos(D)K_1 \cos(D) - \sin(D) K_2 \sin(D)$ can have any possible list of singular values $\sigma(M)$ in $[0,1]$ with an appropriate choice of $K_1,K_2 \in \U(n)$.

    The singular values $\sigma(M)$ are invariant under multiplying on either side by a permutation matrix, so we can safely rearrange the diagonal of $D$ to the order $\pi x_1, \pi x_n, \pi x_2, \pi x_{n-1}, \ldots$. Now let $H$ be the block-diagonal subgroup
    \begin{equation*}
        \begin{cases}
            \U(2)^{\times n/2} & \text{$n$ even}\\
            \U(2)^{\times (n-1)/2} \times \U(1) & \text{$n$ odd}
        \end{cases}
    \end{equation*}
    in $\U(n)$. If we choose $K_1, K_2 \in H$, then evidently $M$ also has the same block-diagonal structure. By Lemmas~\ref{lem:SU2-suff} and \ref{lem:SU4-suff}, we can choose $K_1, K_2 \in H$ making the first block in $M$ have any singular values $\sigma_1, \sigma_2$, the second block have any singular values $\sigma_3, \sigma_4$, and so on.
\end{proof}

We close this section with an example in which Theorem~\ref{thm:main-1} fails to be a sufficient condition for $\G = \K x \K y \K$. Take $\G = \Sp(n)$ and
\begin{equation*}
\K = \left\{ \begin{bmatrix} X & 0 \\ 0 & \overline{X} \end{bmatrix} : X \in \U(n) \right\} \simeq \U(n).
\end{equation*}
This is very similar to the type AIII case $\SU(2n) \supseteq \S(\U(n) \times \U(n))$, and we can again take 
\begin{itemize}
\item $\a \simeq \RR^n$ the matrices $\left[ \begin{smallmatrix} 0 & iD \\ iD & 0 \end{smallmatrix} \right]$ with $D$ real diagonal 
\item fundamental alcove $\alc = \{{\bf x} \in \RR^n : \tfrac{1}{2} > x_1 > \cdots > x_n > 0\}$.
\item finite Weyl group $B_n$
\item for $g \in \G$, if $a(g) = {\bf x} \in \overline{\alc}$ then the upper-left $n \times n$ corner of $g$ has singular values $\cos(\pi x_n), \ldots, \cos(\pi x_1)$.
\end{itemize}
Thus, Theorem~\ref{thm:main-2} asserts that 
\begin{equation*}
\B(\Sp(2), \U(2)) \subseteq \{(\tfrac{1}{2}-x,x) : x \in [0,\tfrac{1}{4}]\},
\end{equation*}
but unlike in Lemma~\ref{lem:SU4-suff}, this is \emph{not} an equality. For instance, take $x = 0$, so $D = \diag(\pi/2, 0)$ and
\begin{equation*}
V = \begin{bmatrix} \cos D & i\sin D \\ i \sin D & \cos D \end{bmatrix} = \begin{bmatrix}
0 & 0 & i & 0\\
0 & 1 & 0 & 0\\
i & 0 & 0 & 0\\
0 & 0 & 0 & 1
\end{bmatrix}.
\end{equation*}
Then $\K V \K V^{-1} \K = \G$ would require that the upper-left $2 \times 2$ corner of $V \left[\begin{smallmatrix} X & 0 \\ 0 & \overline{X}\end{smallmatrix}\right] V^{-1}$ can have any possible pair of singular values in $[0,1]$ for an appropriate $X = e^{i\alpha}\left[\begin{smallmatrix} z & -\overline{w} \\ w & \overline{z} \end{smallmatrix}\right] \in \U(2)$. But this upper-left corner is 
\begin{equation*}
\cos(D)X\cos(D) + \sin(D)\overline{X}\sin(D) = \left[\begin{smallmatrix} e^{-i\alpha}\overline{z} & 0 \\ 0 & e^{i\alpha}z \end{smallmatrix}\right],
\end{equation*}
with singular values $(|z|,|z|)$. With some extra work one can show that in fact $\B(\Sp(2), \U(2)) = \{(\tfrac{1}{2}-x,x) : x \in [\tfrac{1}{8},\tfrac{1}{4}]\}$ (note that half of this work is the ${\bf x} \leq \tfrac{1}{8}$ case in the proof of Lemma~\ref{lem:SU4-suff}).

\section{Applications to quantum gate decompositions} \label{sec:gates}
An \emph{$n$-qubit gate} is an element of $\U(2^n)$. In fact, two gates are considered the same if they differ by a phase factor, so working in $\PSU(2^n)$ would be more accurate, but we will not worry about this. If $V \in \U(2^m)$ and $W \in \U(2^n)$ then one can form the $(m+n)$-qubit gate $V \otimes W$, which does not mix the states of qubits $1, \ldots, m$ and qubits $m+1, \ldots, m+n$. At the extreme end is the subgroup of \emph{single-qubit gates} $\U(2)^{\otimes n} \subseteq \U(2^n)$---note that this terminology is somewhat ambiguous since it could also refer simply to elements of $\U(2)$.

An important problem in quantum computing is \emph{gate decomposition}: fix a small set of ``nice'' gates $S \subseteq \U(2^n)$, and attempt to factor arbitrary $n$-qubit gates as a product of elements of $S$ plus gates acting only within smaller groups of qubits, i.e. elements of $\U(2^{a_1}) \otimes \cdots \otimes \U(2^{a_m})$ where $a_1 + \cdots + a_m = n$. 

For example, recall the \emph{CNOT} (controlled-not) gate $C$ from the introduction. More generally, we can let a CNOT act on 2 qubits $i$ and $j$ out of $n$ total, to get an $n$-qubit gate. That is, if we take $(\CC^2)^{\otimes n}$ to have basis vectors $\ket{b_1 \cdots b_n}$ over binary words $b_1\cdots b_n$, then a CNOT with control qubit $i$ and target qubit $j$ acts as
\begin{equation*}
\ket{{\mathbf b}} \mapsto \begin{cases} \ket{{\mathbf b}} & \text{if $b_i = 0$}\\
    \ket{b_1 \cdots b_{j-1} \operatorname{NOT}(b_j) b_{j+1} \cdots b_n} & \text{if $b_i = 1$}
\end{cases}
\end{equation*}
Various algorithms have been developed to decompose arbitrary $n$-qubit gates into a product of CNOTs and single-qubit gates. Shende, Markov, and Bullock \cite{shende-markov-bullock} showed using a dimension-counting argument that at least $(4^n-3n-1)/4$ CNOTs are required in any such decomposition holding for all $n$-qubit gates. The current most efficient algorithm seems to be due to Krol and Al-Ars \cite{block-ZXZ}, requiring $\leq \tfrac{22}{48} 4^n - \tfrac{3}{2} 2^n + \tfrac{5}{3}$ CNOTs.

The 2-qubit case has an interesting property that makes it more tractable, if still nontrivial. Consider the two standard labelings of the $A_n$ and $D_n$ Dynkin diagrams:
\begin{center}
\raisebox{9mm}{\begin{tikzpicture}
\filldraw[color=black] (0,0) circle (0.5mm);
\filldraw[color=black] (1,0) circle (0.5mm);
\filldraw[color=black] (3,0) circle (0.5mm);
\draw[above] node at (0,0) {$1$};
\draw[above] node at (1,0) {$2$};
\draw[above] node at (3,0) {$n$};
\draw node at (2,0) {$\cdots$};

\draw (0,0) -- (1,0) -- (1.75,0);
\draw (2.2,0) -- (3,0);
\end{tikzpicture}} \qquad \qquad
\begin{tikzpicture}
\filldraw[color=black] (0,-0.6) circle (0.5mm);
\filldraw[color=black] (0,0.6) circle (0.5mm);
\filldraw[color=black] (1,0) circle (0.5mm);
\filldraw[color=black] (3,0) circle (0.5mm);
\draw[below] node at (0,-0.6) {$1$};
\draw[above] node at (0,0.6) {$2$};
\draw[above] node at (1,0) {$3$};
\draw[above] node at (3,0) {$n$};
\draw node at (2,0) {$\cdots$};

\draw (0,-0.6) -- (1,0);
\draw (0,0.6) -- (1,0) -- (1.75,0);
\draw (2.2,0) -- (3,0);
\end{tikzpicture}
\end{center}
According to these labelings, $D_2 = A_1 \times A_1$, and indeed there is an exceptional isomorphism of Lie algebras $\su(2) \oplus \su(2) \simeq \so(4)$. At the group level this manifests as the unexpected equality $\mathcal{Q}^\dagger \SU(2)^{\otimes 2} \mathcal{Q} = \SO(4)$, where $\mathcal{Q} = \frac{1}{2}\left[ \begin{smallmatrix}
1 & 1 & i & i\\
1 & -1 & i & -i\\
-1 & 1 & i & -i\\
1 & 1 & -i & -i
\end{smallmatrix}\right]$ is a so-called \emph{Bell matrix} or \emph{magic matrix}.  Thus, $\SU(2)^{\otimes 2}$ is a Cartan subgroup of $\SU(4)$, the fixed-point set of the involution $\theta(U) = \overline{\mathcal{Q}^\dagger U \mathcal{Q}}$.

\begin{proposition} \label{prop:min-2q}
Let $\K = \SU(2) \otimes \SU(2)$ and $U,V \in \SU(4)$. Then $\K U \K V \K = \SU(4)$ if and only if $U,V$ are both equivalent to the \emph{Berkeley gate}
\begin{equation*}
B = \begin{bmatrix}
        \cos(\pi/8) & 0 & 0 & i\sin(\pi/8)\\
        0  & \cos(3/\pi/8) & i\sin(3\pi/8) & 0\\
        0  & i\sin(3/\pi/8) & \cos(3\pi/8) & 0\\
        i\sin(\pi/8) & 0 & 0 & \cos(\pi/8)
\end{bmatrix}
\end{equation*}
up to multiplication by single-qubit gates.
\end{proposition}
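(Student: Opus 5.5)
The plan is to conjugate by the magic matrix $\Q$ and reduce to the type AI case of Theorem~\ref{thm:main-2}. The map $U \mapsto \Q^\dagger U \Q$ is an automorphism of $\SU(4)$ carrying $\K = \SU(2)\otimes\SU(2)$ onto $\SO(4)$. Hence $\K U \K V \K = \SU(4)$ if and only if
\begin{equation*}
\SO(4)\,U'\,\SO(4)\,V'\,\SO(4) = \SU(4), \qquad U' = \Q^\dagger U \Q, \quad V' = \Q^\dagger V \Q .
\end{equation*}
By the type AI case of Theorem~\ref{thm:main-2} with $n = 4$, this holds exactly when both $U'U'^T$ and $V'V'^T$ have characteristic polynomial $x^4 + 1$. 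Equivalently, by Theorem~\ref{thm:fund-alcove} (since $\SU(4)$ is simply connected), $a(U') = a(V') = \zeta = (\tfrac38, \tfrac18, -\tfrac18, -\tfrac38)$. So the $\SO(4)$-double cosets in question form a single double coset, namely $\SO(4)\exp(\pi i\zeta)\SO(4)$. Pulling back by $\Q$, the condition becomes: $U$ and $V$ both lie in the single double coset $\K\,\Q D \Q^\dagger\,\K$, where $D = \diag(e^{3\pi i/8}, e^{\pi i/8}, e^{-\pi i/8}, e^{-3\pi i/8})$.

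It then remains to show that $B$ lies in this double coset, that is, $\K B \K = \K \Q D \Q^\dagger \K$. I would verify this using the Cartan double and Lemma~\ref{lem:KAK-conj}. Compute $B' = \Q^\dagger B \Q$ and check that $B'B'^T$ has characteristic polynomial $x^4+1$. The Berkeley gate is symmetric, and it is essentially $\exp\bigl(i(\tfrac{\pi}{8}\,\sigma_x\otimes\sigma_x + \tfrac{3\pi}{8}\,\sigma_y\otimes\sigma_y)\bigr)$ up to sign conventions. The magic basis diagonalizes the operators $\sigma_x\otimes\sigma_x$, $\sigma_y\otimes\sigma_y$, $\sigma_z\otimes\sigma_z$ simultaneously with real $\pm1$ eigenvalues. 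So $B'$ should be diagonal, up to an $\SO(4)$-conjugation or permutation, with phases $e^{i(\pm\pi/8 \pm 3\pi/8)}$. These phases are $e^{\pm i\pi/2}$ and $e^{\pm i\pi/4}$. Then $B'B'^T = B'^2$ has eigenvalues $e^{\pm i\pi/2}$ and $e^{\pm 3i\pi/4}$ or similar.

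This is where I expect the main obstacle. The spectrum $\{e^{\pm i\pi/4}, e^{\pm 3i\pi/4}\}$, which is the root set of $x^4+1$, must come out exactly. If the naive computation gives a different set, the discrepancy must be an artifact of conventions, and there are three possible sources:
\begin{itemize}
\item the global phase of $B$ (the statement works in $\SU(4)$, while $B$ may have determinant $\neq 1$);
\item the precise form of $\Q$;
\item the factor of $2$ between $\exp(\pi i X)$ and the Cartan double.
\end{itemize}
Since an overall scalar $\lambda$ multiplies $B'B'^T$ by $\lambda^2$, I will first normalize $B$ to determinant one. I will then track carefully which multiple of the $\sigma\otimes\sigma$ coefficients appears in the Cartan double. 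I expect $\exp(2\pi i\zeta)$, which has entries $e^{\pm 3\pi i/4}$ and $e^{\pm \pi i/4}$, to match.

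Finally, I need to interpret "equivalent up to multiplication by single-qubit gates". This should mean lying in $\K B \K$, possibly up to a global phase. Elements of $\U(2)\otimes\U(2)$ differ from elements of $\SU(2)\otimes\SU(2)$ only by scalars, so the phase ambiguity is harmless. Assembling these steps gives both directions of the statement.
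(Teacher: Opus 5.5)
Your reduction is the paper's own argument: conjugate by $\Q$ to reach the type AI case with $n=4$, so the condition is that the Cartan doubles have characteristic polynomial $x^4+1$. By Lemma~\ref{lem:KAK-conj} this singles out one double coset, and it remains to check $B$. The gap is in that last check, which is the only actual computation in the proof. Your sketch of it is wrong, and the fallback you propose is not a valid way to close it.

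\textbf{The generator of $B$ is wrong.} On the span of the first and fourth basis vectors, $a\,\sigma_x\otimes\sigma_x+b\,\sigma_y\otimes\sigma_y$ acts as $(a-b)\sigma_x$. On the span of the second and third it acts as $(a+b)\sigma_x$. Matching these with the two $2\times 2$ blocks of $B$ gives $a-b=\pi/8$ and $a+b=3\pi/8$. So $B=\exp\bigl(i(\tfrac{\pi}{4}\sigma_x\otimes\sigma_x+\tfrac{\pi}{8}\sigma_y\otimes\sigma_y)\bigr)$, not the coefficients $\pi/8$, $3\pi/8$ you wrote.

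Consequently the magic-basis eigenvalues of $B$ are $e^{\pm i\pi/8}$ and $e^{\pm 3i\pi/8}$. Then $B'B'^T=B'^2$ has eigenvalues $e^{\pm i\pi/4}$ and $e^{\pm 3i\pi/4}$, which are exactly the roots of $x^4+1$. Your own arithmetic was also inconsistent: squaring $e^{\pm i\pi/2}, e^{\pm i\pi/4}$ gives $-1,-1,\pm i$, not the values you listed.

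There is an even more direct check that avoids diagonalizing in the magic basis. The paper's $\Q$ satisfies $\Q\Q^T=-\sigma_y\otimes\sigma_y$, so $\Q M\Q^\dagger=U(\sigma_y\otimes\sigma_y)U^T(\sigma_y\otimes\sigma_y)$. For $U=B$, which is symmetric and commutes with $\sigma_y\otimes\sigma_y$, this is just $B^2$. The blocks of $B^2$ are $\cos(\pi/4)I+i\sin(\pi/4)\sigma_x$ and $\cos(3\pi/4)I+i\sin(3\pi/4)\sigma_x$, giving the same spectrum.

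\textbf{The convention fallback cannot work.} You should not plan to adjust conventions until the answer fits; none of your three listed sources can change it.
\begin{itemize}
\item $\det B=1$ already, since each block has determinant $\cos^2\theta+\sin^2\theta=1$.
\item The only scalars in $\SU(4)$ are $\pm1,\pm i$. They multiply the Cartan double by $\lambda^2=\pm1$, which preserves the root set of $x^4+1$. This is also the right justification for your closing remark about phases from $\U(2)\otimes\U(2)$.
\item $U\theta(U)^{-1}$ depends only on $\theta$, and $\theta$ is determined by $\K$. So any other $\Q$ with $\Q^\dagger\K\Q=\SO(4)$ merely conjugates the Cartan double.
\item The factor of $2$ plays no role when you compute $B'B'^T$ directly.
\end{itemize}
If the spectrum had genuinely come out differently, the only correct conclusion would be that $B$ is not in that double coset. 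With the corrected computation, your proof is complete and coincides with the paper's.
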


\begin{proof}
According to Theorem~\ref{thm:main-2}, $\K U \K V \K = \SU(4)$ holds if and only if $M = (\Q^\dagger U \Q)(\Q^\dagger U \Q)^T$ satisfies $M^4 = -I$, and likewise with $U$ replaced by $V$. A direct calculation shows that this holds for $U = B$, and we know this condition uniquely characterizes the $\K$-double coset of $U$ by Lemma~\ref{lem:KAK-conj}.
\end{proof}

The equation $\K B \K B \K = \SU(4)$ is not new \cite{berkeley-gate}, but Proposition~\ref{prop:min-2q} shows that the Berkeley gate is essentially unique with this minimal decomposition property, answering a question from \cite{peterson-crooks-smith}.

Next we turn to an application of Theorem~\ref{thm:main-2} in type AIII. Suppose $F,G \in \U(2)$ are 1-qubit gates with $|F_{11}| = |G_{11}| = 1/\sqrt{2}$. Then the upper-left $2^{n-1} \times 2^{n-1}$ corner of $F \otimes I_{2^{n-1}}$ is $F_{11}I_{2^{n-1}}$, with singular values all equal to $1/\sqrt{2}$, and likewise for $G$. Therefore Theorem~\ref{thm:main-2} (case AIII) says any $n$-qubit gate can be decomposed as 
\begin{align} \label{eq:block-ZXZ}
&\begin{bmatrix} P & 0 \\ 0 & A \end{bmatrix} (F \otimes I_{2^{n-1}})
\begin{bmatrix} Q & 0 \\ 0 & B \end{bmatrix} (G \otimes I_{2^{n-1}})
\begin{bmatrix} R & 0 \\ 0 & C \end{bmatrix} \nonumber \\
=\, &\begin{bmatrix} I & 0 \\ 0 & A' \end{bmatrix} (F \otimes I_{2^{n-1}})
\begin{bmatrix} I & 0 \\ 0 & B' \end{bmatrix} (G \otimes I_{2^{n-1}})
\begin{bmatrix} S & 0 \\ 0 & C' \end{bmatrix},
\end{align}
where we have simplified using the fact any matrix $\diag(M,M) = I_2 \otimes M$ commutes with any $N \otimes I_{2^{n-1}}$. A natural choice is to take $F = G$ to be the \emph{Hadamard gate} $H = \tfrac{1}{\sqrt 2} \left[ \begin{smallmatrix} 1 & 1 \\ 1 & -1 \end{smallmatrix} \right]$, in which case \eqref{eq:block-ZXZ} is the \emph{block-ZXZ decomposition} from \cite{block-ZXZ-orig}. This factorization was used by Krol and al-Ars \cite{block-ZXZ} to find a new general gate decomposition involving fewer CNOTs than any previously known. 
 
We note the following theorem of Gupta and Hare which may lead to weaker but potentially still useful decompositions of elements of $\U(n)$.
\begin{theorem}[\cite{gupta-hare}, Theorem 3.1]
Suppose $x,y \in \G$ and $a(x), a(y)$ are regular elements of $\G$. Then $\K x \K y \K$ has nonempty interior.
\end{theorem}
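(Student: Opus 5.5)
This is the Gupta--Hare theorem, so I would reprove it by a dimension count on the differential of the multiplication map. Consider
\begin{equation*}
\mu : \K \times \K \times \K \to \G, \qquad (k_1,k_2,k_3) \mapsto k_1 x k_2 y k_3,
\end{equation*}
whose image is $\K x \K y \K$. If $d\mu$ is surjective at some point, then $\mu$ is a submersion there, and the local submersion theorem shows that the image contains an open neighbourhood. So the whole task is to exhibit one point where $d\mu$ has rank $\dim \g$.

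First I will reduce to $x = \exp(\pi i X)$ and $y = \exp(\pi i Y)$ with $X = a(x)$ and $Y = a(y)$. Replacing $x,y$ by other representatives of the same double cosets only reparametrizes $\mu$ by translations in $\K\times\K\times\K$. I will compute at $(e,k,e)$ for a $k \in \K$ still to be chosen. Translating everything to the identity by right multiplication by $(xky)^{-1}$, the image of $d\mu$ is
\begin{equation*}
\k + \Ad_x(\k) + \Ad_{xk}\Ad_{y}(\k).
\end{equation*}
Hence it suffices to find $k$ with $\k + \Ad_x\k + \Ad_{xk}\Ad_y\k = \g$.

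The regularity hypothesis enters through the restricted root space decomposition
\begin{equation*}
\g = \k_0 \oplus \a \oplus \bigoplus_{\lambda>0} (\k_\lambda \oplus \p_\lambda),
\end{equation*}
where $\k_0$ is the centralizer of $\a$ in $\k$. For $a = \exp(\pi i X)$, the operator $\Ad_a$ rotates each plane $\k_\lambda \oplus \p_\lambda$ by an angle $\pi\lambda(X)$ up to normalization. When $X$ is regular, meaning $\lambda(X) \notin \ZZ$ (more precisely, the relevant angles are not multiples of $\pi$), each rotation moves $\k_\lambda$ off itself. This gives
\begin{equation*}
\k + \Ad_a\k = \k \oplus \bigoplus_{\lambda} \p_\lambda = \g \ominus \a,
\end{equation*}
so $\k + \Ad_x\k$ misses exactly a complement isomorphic to $\a$. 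Similarly $\Ad_x^{-1}(\k + \Ad_x\k) = \Ad_x^{-1}\k + \k$ misses exactly $\a$ (suitably conjugated). The condition therefore becomes
\begin{equation*}
\Ad_k(\Ad_y\k + \k) + \Ad_x^{-1}\k + \k = \g.
\end{equation*}
It is enough to choose $k$ so that $\Ad_k$ applied to the $y$-sum covers the missing directions $\a$ of the $x$-sum. Since the $y$-sum already contains $\p_\lambda$ for all $\lambda$, and $\a \subset \p = \bigcup_{k} \Ad_k(\text{stuff})$, I will take $k$ generic: the set of $k$ with $\Ad_k\p_\lambda$-span transverse to the $\a$-deficiency is Zariski-open in $\K$. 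It is nonempty because $\p = \Ad_{\K}\a$ spans, and $\sum_\lambda \p_\lambda$ is not $\Ad_{\K}$-invariant when the root system is nonzero.

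The main obstacle is this last genericity step. I need to show that some $\Ad_k$ carries $\bigoplus_\lambda \p_\lambda$ to a subspace which, together with $\Ad_x^{-1}\k + \k$, spans $\g$. I would do it by a first-order argument. Take $k = \exp(tZ)$ with $Z \in \k_\lambda$ and observe that $[Z,\p_\lambda]$ has a nonzero $\a$-component, namely the coroot direction of $\lambda$. Coroots span $\a$ because $\G$ is simple. This yields full rank for small generic $t$. I will also double-check that the angle conditions coming from regularity are exactly what make the $\k_\lambda$ rotations nondegenerate.
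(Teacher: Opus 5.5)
The paper does not prove this statement; it is quoted from Gupta--Hare without proof, so there is no internal argument to compare against. On its own terms your submersion argument is the right route, and your reduction is correct. At $(e,k,e)$ the image of $d\mu$ is $\k+\Ad_x\k+\Ad_{xk}\Ad_y\k$. Moreover, if $a=\exp(\pi i X)$ with $\sin(\pi\lambda(X))\neq 0$ for every restricted root $\lambda$, then $\k+\Ad_a\k=\k\oplus\mathfrak{q}$, where $\mathfrak{q}=\bigoplus_\lambda\p_\lambda=\a^\perp\cap\p$. No ``suitable conjugation'' is needed here: $\Ad_x$ is orthogonal and fixes $\a$ pointwise, so it preserves $\a^\perp=\k\oplus\mathfrak{q}$, and the same holds for $x^{-1}$ and $y$. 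Hence your condition is exactly $\mathfrak{q}+\Ad_k\mathfrak{q}=\p$, i.e.\ $\a\cap\Ad_k\a=0$, which is independent of $x,y$. Two points need tightening.

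\textbf{The hypothesis.} State it as you actually use it: $\lambda(a(x))\notin\ZZ$ and $\lambda(a(y))\notin\ZZ$ for all restricted roots $\lambda$, i.e.\ $a(x),a(y)$ lie in the open alcove $\alc$. The gloss the paper gives after the statement (centralizer in $\G$ of dimension $\dim\h$) would not suffice. In $\SU(2)\supseteq\SO(2)$, the element $x=y=\diag(i,-i)$ is regular in $\G$, but $a(x)=(\tfrac12,-\tfrac12)$ lies on the wall $x_1-x_2=1$, and $\K x\K x\K=\SO(2)$ has empty interior.

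\textbf{The genericity step, which as written does not close.} Non-invariance of $\mathfrak{q}$ under $\Ad_\K$ only gives $\Ad_k\mathfrak{q}\neq\mathfrak{q}$, not a full-rank sum. A single $Z\in\k_\lambda$ only produces one direction of $\a$ at first order. Indeed, $\langle [Z,Q],H\rangle=\langle Q,[H,Z]\rangle$ and $[H,Z]=\lambda(H)\,\hat Z$ with a fixed $\hat Z\in\p_\lambda$, so $\pi_\a[Z,Q]\in\RR H_\lambda$, where $H_\lambda\in\a$ is defined by $\langle H_\lambda,H\rangle=\lambda(H)$. Since a single $k$ must cover all of $\a$ at once, take $Z=\sum_\lambda Z_\lambda$ with $0\neq Z_\lambda\in\k_\lambda$ for a set of $\lambda$ spanning $\a^*$. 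By the same identity, and because the $\p_\mu$ are mutually orthogonal, $\pi_\a[Z,\cdot\,]$ sends $\p_\mu$ onto $\RR H_\mu$ whenever $Z_\mu\neq 0$, with no cross terms. Hence $\pi_\a[Z,\mathfrak{q}]=\a$.

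Now choose $Q_1,\dots,Q_r\in\mathfrak{q}$ with $\pi_\a[Z,Q_j]$ a basis of $\a$. The vectors $t^{-1}\pi_\a(\Ad_{\exp tZ}Q_j)=\pi_\a[Z,Q_j]+O(t)$ still span $\a$ for small $t\neq0$. Since $\mathfrak{q}+\Ad_{\exp tZ}\mathfrak{q}$ contains $\mathfrak{q}$, it contains these projections, so it equals $\p$. With this repair, and the submersion theorem at $(e,\exp tZ,e)$, your proof is complete.
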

Here, an element $g \in \G$ is regular if its centralizer has minimal possible dimension (equal to $\dim \h$). For example, $g \in \G = \U(n)$ is regular exactly if it has distinct eigenvalues. If $\K x \K y \K$ has nonempty interior, then it has positive Haar measure, so such sets $\K x \K y \K$ could be used to construct decompositions which may not work for all elements of $\U(n)$, but at least work with positive probability. As the set of regular elements is dense in $\G$, such decompositions are much easier to come by than those coming from an exact Cartan decomposition.

\section*{Acknowledgements}
I thank Jim van Meter for help navigating the literature on Cartan decompositions and quantum gate decompositions.

\bibliographystyle{plain}
\bibliography{Q}

\end{document}